\documentclass[11pt,reqno]{article}

\usepackage[letterpaper,hmargin=1in,top=1in,bottom=1.1in,
            footskip=0.6in]{geometry}

\usepackage{titlesec}
\titleformat{\subsection}[runin]{\normalfont\bfseries}
 {\thesubsection.}{.5em}{}[.]
\titlespacing{\subsection}{0pt}{2ex plus .1ex minus .2ex}{.8em}
\titleformat{\subsubsection}[runin]{\normalfont\bfseries}
 {\thesubsubsection.}{.3em}{}[.]
\titlespacing{\subsubsection}{0pt}{1ex plus .1ex minus .2ex}{.5em}
\titleformat{\paragraph}[runin]{\normalfont\itshape}
 {\theparagraph.}{.3em}{}[.]
\titlespacing{\paragraph}{0pt}{1ex plus .1ex minus .2ex}{.5em}

\usepackage{amsmath}
\usepackage{amssymb}
\usepackage{amsfonts}
\usepackage{latexsym}
\usepackage{amsthm}
\usepackage{amsxtra}
\usepackage{amscd}
\usepackage{bbm}
\usepackage{bm}
\usepackage{tensor}
\usepackage{microtype}
\usepackage{enumitem}
\usepackage{graphicx,color}
\definecolor{darkred}{rgb}{0.9,0,0.3}
\definecolor{darkblue}{rgb}{0,0.3,0.9}

\definecolor{vdarkred}{rgb}{0.7,0,0.2}
\definecolor{vdarkblue}{rgb}{0,0.2,0.7}
\usepackage[colorlinks,linkcolor=vdarkblue,citecolor=vdarkred,
            urlcolor=vdarkblue]{hyperref}

\usepackage[nottoc,notlof,notlot]{tocbibind}
\usepackage{cite}

\numberwithin{equation}{section}
\numberwithin{figure}{section}

\theoremstyle{plain}
\newtheorem{theorem}{Theorem}[section]
\newtheorem*{theorem*}{Theorem}
\newtheorem{lemma}[theorem]{Lemma}
\newtheorem*{lemma*}{Lemma}

\newtheorem*{corollary*}{Corollary}
\newtheorem{proposition}[theorem]{Proposition}
\newtheorem*{proposition*}{Proposition}
\newtheorem{conjecture}[theorem]{Conjecture}
\newtheorem*{conjecture*}{Conjecture}

\theoremstyle{definition}

\newtheorem*{definition*}{Definition}

\newtheorem*{example*}{Example}

\newtheorem*{remark*}{Remark}

\newtheorem*{assumption*}{Assumption}

\newcommand{\bb}{\mathbb}

\renewcommand{\P}{\mathbb{P}}
\newcommand{\E}{\mathbb{E}}
\newcommand{\R}{\mathbb{R}}
\newcommand{\C}{\mathbb{C}}

\newcommand{\Z}{\mathbb{Z}}
\newcommand{\Q}{\mathbb{Q}}
\newcommand{\e}{\mathrm{e}}

\newcommand{\ii}{\mathrm{i}}
\newcommand{\dd}{\mathrm{d}}

\newcommand*{\deq}{\mathrel{\vcenter{\baselineskip0.65ex \lineskiplimit0pt
 \hbox{.}\hbox{.}}}=}

\newcommand{\eqdist}{\overset{d}{=}}

\renewcommand{\leq}{\leqslant}

\renewcommand{\geq}{\geqslant}

\renewcommand{\epsilon}{\varepsilon}

\DeclareMathOperator{\diag}{diag}
\DeclareMathOperator{\tr}{Tr}

\DeclareMathOperator{\re}{Re}
\DeclareMathOperator{\im}{Im}

\newcommand{\cM}{{\mathcal M}}

\newcommand{\eqlabel}[1]{\refstepcounter{equation}\tag{\theequation}\label{#1}}
\newcommand{\F}{\mathbb F}
\newcommand{\Pp}{\P}
\newcommand{\one}{\mathbf 1}
\newcommand{\cG}{\mathcal G}
\newcommand{\eps}{\varepsilon}
\DeclareMathOperator{\Aff}{Aff}
\DeclareMathOperator{\rank}{rank}

\title{\bf \Large The oriented Kesten--McKay law
for random regular digraphs}

\author{Yukun He\footnote{Shanghai Center for Mathematical Sciences, Fudan University. Email:	\href{mailto:heyukun@fudan.edu.cn}{heyukun@fudan.edu.cn}}  \and Jiaoyang Huang\footnote{Department of Statistics and Data Science, University of Pensylvannia. Email:	\href{mailto:huangjy@wharton.upenn.edu}{huangjy@wharton.upenn.edu}} \vspace{1em}}

\begin{document}

\maketitle

\begin{abstract}
We consider the adjacency matrix of a random directed $d$-regular graph
on $N$ vertices.  For fixed $d\geq 2$, we prove that the empirical eigenvalue distribution converges weakly in probability to the
oriented Kesten--McKay law as $N\to \infty$. The key technical input is the small-ball probability estimate for the smallest singular value. The proof combines a fixed-rank transposition argument with
finite-field anticoncentration for shifted inverse compressions. We also prove a polynomial hard-edge estimate, which allows us to deduce the global law from the vanishing small-ball probability.
\end{abstract}


\section{Introduction and main results}\label{sec:introduction}

In this paper, we study random directed $d$-regular graphs on $N$ vertices under the uniform probability measure, without loops or multiple edges. Let $A\in \{0,1\}^{N\times N}$ be the adjacency matrix of the graph, and denote its eigenvalues by $\lambda_1,\ldots,\lambda_N$. 

For undirected random $d$-regular graphs on $N$ vertices, it is a classical result that when $d$ is fixed, the empirical eigenvalue density converges to the Kesten--McKay law
\[
\frac{d}{d^2-x^2}\frac{\sqrt{[4(d-1)-x^2]_+}}{2\pi}
\]
as $N\to\infty$ \cite{Kesten,McKay}. This follows from an elementary calculation via the moment method. For the directed model, the same question becomes much more challenging: the eigenvalues are complex, and complex moments alone do not determine their distribution. The following is a well-known problem.

\begin{conjecture}[Bordenave and Chafa\"{\i} \cite{BordenaveChafai}, Cook \cite{CookCircular}] \label{conj1.1}
	For fixed $d\geq 2$, the empirical eigenvalue measure $N^{-1}\sum_{i=1}^N\delta_{\lambda_i}$ converges weakly in probability to the measure with density
	\[
	h_d(z)\deq\frac{d^2(d-1)}{\pi(d^2-|z|^2)^2}
	\mathbf 1_{\{|z|<\sqrt d\}}
	\]
as $N\to \infty$. This measure is called the $\textnormal{oriented Kesten--McKay}$ law.
\end{conjecture}

We may now state our main result.

\begin{theorem}\label{thm:okm}
	Conjecture \ref{conj1.1} is true.
\end{theorem}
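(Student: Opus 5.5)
The proof will follow Girko's Hermitization strategy, with the structure laid out in the abstract. By the Hermitization lemma, in the form of Bordenave--Chafa\"{\i}, weak convergence in probability of the empirical eigenvalue measure of $A$ to $h_d$ follows from two statements about the log-potentials. The first is that for almost every $z\in\C$ the singular value distributions $\nu_{A-z}$ converge weakly in probability to a deterministic limit $\nu_z$. The second is that $\log(\cdot)$ is uniformly integrable in probability with respect to $\nu_{A-z}$. Cook \cite{CookCircular} and Bordenave--Chafa\"{\i} already show that the local weak limit of the random $d$-regular digraph is the directed $d$-regular tree. Hence $\nu_{A-z}$ converges to the singular value law $\nu_z$ of $T-z$, where $T$ is the adjacency operator of that tree. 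The resulting potential $\int\log s\,\dd\nu_z(s)$ is known to equal the log-potential of $h_d$; this is how the conjectured density is derived. So the first statement is available, and the whole task reduces to the second, which concerns only the small singular values. Large singular values pose no problem, since $\|A-z\|\leq d+|z|$ deterministically.

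For uniform integrability at zero, I would split the singular values $s_N\leq\dots\leq s_1$ of $A-z$ into three ranges. The first is the \emph{smallest singular value}. I need $\P(s_N(A-z)\leq N^{-C})\to 0$ for a.e.\ $z$, and in fact only a vanishing small-ball probability, not a rate. The second is the \emph{intermediate range} $s_{N-i}$ with $N^{1-\gamma}\geq i\geq \log^C N$ or so. There I would aim for a bound of the form $s_{N-i}\geq c\,(i/N)^{K}$, which is the polynomial hard-edge estimate. Such a bound makes $\frac1N\sum_i|\log s_{N-i}|$ over this range negligible. The third range, $i\geq \epsilon N$, is handled by the local weak convergence together with the tree limit, since $\nu_z$ has no atom at $0$ for $|z|\ne\sqrt d$ and $z\ne d$. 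The hard-edge bound would be proved by a Tao--Vu style argument. One bounds the distance from a row to the span of a set of other rows using switchings to create local randomness. Because $A$ is $0/1$ with only $d$ ones per row, I would instead work with the Stieltjes transform of $(A-z)^*(A-z)$ at spectral parameter $\eta\asymp (i/N)^K$. I would control it using the tree-like local structure, which holds with high probability up to radius $c\log N$, and apply resolvent identities that convert an upper bound on $\im$ of the Stieltjes transform into a counting bound on small singular values.

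The main obstacle is the smallest-singular-value small-ball estimate, since fixed $d$ removes all continuous randomness. My plan follows the abstract. First, reduce the invertibility of $A-z$ to a statement about a fixed-rank perturbation. Pick a bounded number $k$ of vertices and perform a transposition-type switching, that is, resample the edges among these $k$ vertices and their out-neighbours while preserving regularity. Conditioned on everything else, $A-z$ then becomes $M+UXV^*$, where $M$ is frozen, $U$ and $V$ have rank $k$, and $X$ ranges over a finite set of switching configurations. By the Schur complement formula, $s_N(A-z)$ being small forces a $k\times k$ matrix $X^{-1}+V^*M^{-1}U$, the shifted inverse compression, to be nearly singular. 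This works when $M$ is invertible; otherwise one shifts $z$ slightly and iterates. Second, show anticoncentration of this compression over the switching randomness. Here I would use a finite-field argument. For algebraic $z$, or after approximating $z$ by rationals, reduce the entries modulo a suitable prime $p$. One then shows that the compressions over $\F_p$ are not concentrated on the singular locus, using Odlyzko/Littlewood--Offord type bounds over $\F_p$ for the few random switching choices. Non-singularity mod $p$ lifts to a lower bound on $|\det|$, and a bounded-denominator argument gives $|\det(A-z)|\geq N^{-C}$ with probability $1-o(1)$. Choosing $k$ large makes the failure probability $O(k^{-c})$, which tends to $0$. Combining the three ranges yields uniform integrability, and Hermitization then gives Theorem~\ref{thm:okm}.
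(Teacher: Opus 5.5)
Your outer framework (Hermitization, the tree limit for $\nu_z$, splitting off the small singular values) is reasonable. The step that carries the whole proof, the small-ball estimate, fails as written. For $z=\xi/q$, integrality of $\det(qA-\xi I)\in\Z[i]$ gives only $|\det(A-z)|\ge q^{-N}$, not $N^{-C}$. Nonsingularity modulo $p$ adds nothing quantitative beyond nonvanishing, and passing from $|\det|$ to $s_{\min}$ costs a factor $\|A-z\|^{N-1}$. The best this yields is $N^{-1}\log(1/s_{\min})=O(1)$, which is the same order as the log-potential itself and so useless for uniform integrability.

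The same scale mismatch breaks your Schur-complement step. Near-singularity of one bounded compression $X^{-1}+V^*M^{-1}U$ holds only at accuracy $e^{-cN}N^{O(1)}$ with $c$ small. The entries of $M^{-1}$, however, are Gaussian rationals with denominators of size up to $e^{O(N)}$ by Hadamard, so no exact (finite-field) statement can exclude this approximate degeneracy. You would also need $M$ quantitatively well conditioned, which is the original problem.

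The paper supplies the missing ingredients as follows:
\begin{enumerate}
\item It uses $|\det(qC)|\ge1$ only to isolate a bounded cluster $\tau_r(C)\le e^{-h}$, $\tau_{r+1}(C)\ge e^{-h/b}$ with $h\asymp N$.
\item It lifts this cluster by $r$ column transpositions that are visible in both bottom singular subspaces (Lemma~\ref{lem:cluster}, Proposition~\ref{prop:visibility}).
\item It reverses the last transposition to get an exponentially accurate resonance $|a_e^*B^{-1}a_e+1/w|\le e^{-\kappa N}$.
\item It then \emph{amplifies}. If a positive fraction of pairs resonate, Erd\H{o}s--Simonovits supersaturation gives copies of $K_{\ell+1,\ell+1}$ whose polarized $\ell\times\ell$ determinant is at most $e^{-\ell\kappa N+O(\ell)}$. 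If nonzero, that determinant is at least $q^\ell\Lambda^{-N}$, where $\Lambda^N$ is the Hadamard bound for $|\det(qA-\xi I)|^2$. Taking $\ell\kappa>\log\Lambda$ forces exact singularity.
\item Only this exact event is excluded over $\F_p$. The tool is M\'esz\'aros's global equidistribution of $E^{\mathsf T}(A-\omega I)^{-1}E$ for $d\ge3$, and a pairing formula with a lattice local limit theorem for $d=2$, not local switching anticoncentration.
\item Reversibility of the transposition chain compares forward and reverse paths.
\end{enumerate}
Note also that the paper never proves $s_{\min}\ge N^{-C}$. It proves $s_{\min}\ge e^{-cN}$ for every fixed $c>0$ and pairs this with a separate count $n_{N^{-K}}(A-wI)=O_{\Pp}(1)$.

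Two further steps do not go through. First, the arithmetic method works only at fixed $z\in\Q(i)$, whereas the Bordenave--Chafa\"{\i} lemma needs uniform integrability for almost every $z$. Approximating $z$ by rationals does not help: moving $s_{\min}$ by less than the small-ball scale needs $|z-z'|\le e^{-cN}$, hence denominators growing with $N$, which destroys the $q^{-N}$ bound. The paper instead discretizes Girko's integral before Hermitizing (Lemma~\ref{lem:grid}), exploiting the linear dependence of the eigenvalues on $w$. Logarithmic integrability is then needed only at finitely many fixed nodes in $\Q(i)\setminus\Z[i]$.

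Second, your intermediate bound $s_{N-i}\ge c(i/N)^K$ via the Stieltjes transform at $\eta\asymp(i/N)^K$ is unsupported. Local tree structure does not control the resolvent that far below the scale $N^{-\beta}$ reached by the Adhikari--Dembo local law. That local law also covers only $d\ge3$; for $d=2$ the paper proves a substitute using the Ewens coupling and the comparison theorem of \cite{CGVTH}. The paper needs only $F_{N,w}(y)\le C(y\vee N^{-\beta})$ in that range. It handles singular values in $[e^{-cN},N^{-K}]$ by a different, combinatorial argument. An entropy-pressure colouring bound rules out approximate kernel vectors with a large level set, which makes bottom singular vectors spread. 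Then $\log\det(C^*C+\lambda^2I)$ increases along a majority of transpositions, and boundary counting yields $\Pp(n_{N^{-K}}(A-wI)\ge s)\le(2\vartheta)^{-s}+o(1)$.
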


The most commonly used route to studying non-Hermitian random matrices is Girko's Hermitization \cite{Girko}. It relates the eigenvalues of a non-Hermitian matrix $B$ to the singular values of $B-w$, $w\in \bb C$, whose squares are the eigenvalues of the \textit{Hermitian} matrix $(B-w)(B^*-\bar{w})$. However, Girko's method requires a lower bound on the smallest singular value $s_{\mathrm{min}}(B-w)$. This is the central problem for non-Hermitian random matrices, and it is particularly difficult for graph models: the entries of the adjacency matrix are discrete, making it difficult to exclude bad events. In addition, the adjacency matrix of a regular graph is highly structured, and the methods used in the i.i.d.\,(Erd\H{o}s-R\'enyi) case often fail.

For $C<d<c_0N/(\log N\log\log N)$,
Litvak--Lytova--Tikhomirov--Tomczak-Jaegermann--Youssef \cite{LLTTYShifted} proved, in the model with loops allowed, that
\begin{equation}  \label{1}
\mathbb P(s_{\mathrm{min}}(A-w)<N^{-6})\leq d^{-1/4}
\end{equation}
for $|w|\leq d/6$, where $C$ is sufficiently large and $c_0>0$ is an absolute constant. Cook \cite{CookCircular} obtained related estimates in a polylogarithmic-degree regime. As the bound \eqref{1} decays as an inverse power of $d$, it was used in \cite{LLTTYCircular} to show that when $d \to \infty$, $N^{-1}\sum_{i=1}^N\delta_{\lambda_i/\sqrt{d}}$ converges weakly to the uniform measure on the unit disk. In other words, the spectrum in the growing degree regime satisfies the circular law \cite{Bai97,BR19,GT10,RT19,TaoVu,Wood}. For fixed $d\geq3$, Adhikari--Dembo \cite{AdhikariDembo} studied the eigenvalue distribution of the matrix $(A-w)(A^T-\bar{w})$, thereby reducing Conjecture \ref{conj1.1} to proving the statement
\begin{equation} \label{1.2}
	\bb P\big(s_{\mathrm{min}}(A-w)\leq e^{-N^{\kappa}}\big)=o(1)
\end{equation} 
for all fixed $\kappa>0$ and almost every $w\in \bb C$. However, the available estimates did not yield the vanishing small-ball probability needed for fixed $d$. 

The following is our second main result.

\begin{theorem}\label{thm:small}
	Fix an integer $d\geq2$ and $w\in\Q(i)\setminus\{0,d\}$.
	Then for every fixed $c>0$,
	\[
	\Pp\bigl(s_{\min}(A-wI)\leq e^{-cN}\bigr)=o_{d,w,c}(1)\,.
	\]
\end{theorem}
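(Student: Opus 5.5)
We plan to exploit the arithmetic structure of $w\in\Q(i)\setminus\{0,d\}$. Write $w=a/q$ with $a\in\Z[i]$ and $q\in\Z_{>0}$, and set $M\deq qA-aI$, a matrix with entries in $\Z[i]$. Then $s_{\min}(A-wI)=q^{-1}s_{\min}(M)$. Since every entry of $M$ is bounded by $O_{d,w}(1)$ in modulus, Hadamard's inequality applied to the minors gives $\|M^{-1}\|\le |\det M|^{-1}C^N N^{N/2}$. The determinant $\det M$ is a nonzero Gaussian integer whenever $M$ is invertible, so $|\det M|\geq 1$. This alone yields only $s_{\min}\ge e^{-CN\log N}$, which is not enough.

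The refinement is to work modulo primes. Choose a large fixed rational prime $p$, split or inert in $\Z[i]$, such that $w$ is $p$-integral, and let $\bar M$ be the reduction of $M$ over the finite field $\F=\Z[i]/\mathfrak p$. The quantitative mechanism is as follows. If $s_{\min}(M)\le e^{-cN}$, then by the singular value/minor interlacing, many of the $k\times k$ minors of $M$ are exponentially small. These minors are Gaussian integers, so most of them must vanish. This forces the corank of $M$, and hence of $\bar M$ for every $\mathfrak p$, to be at least some $r=r(c)\geq 1$ after a small perturbation of rows.

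Concretely, we will prove the following statement: if $s_{\min}(M)\le e^{-cN}$, then there is a fixed rank $r$ and a compression, namely $M$ with $r$ rows and columns deleted, that is singular over $\F$ for a positive proportion of small primes. We will then show that for each fixed $r$ and fixed $\mathfrak p$, the probability that $\bar M$ has corank $\ge r$ is at most $\rho(p)+o(1)$, where $\rho(p)\to0$ as $p\to\infty$. Combining these over many independent-ish primes sends the probability to zero.

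The finite-field step is where the graph structure enters. We intend to use a transposition (switching) argument: swapping the out-neighbours of two vertices changes $A$ by a rank-$\le 2$ update $E=e_ue_v^T$-type combination. The goal is to show that, conditioned on the rest of the graph, the corank of $\bar M$ drops with probability bounded below. Fixing a kernel vector $x\in\F^N$ of $\bar M$ and a cokernel vector $y$, a switching at vertices $u,v$ and targets $s,t$ changes the bilinear pairing by $(y_u-y_v)(x_s-x_t)$. If $x$ and $y$ are non-constant on a linear proportion of coordinates, a random switching destroys the kernel with probability $\geq 1-O(1/p)$. This is the anticoncentration for shifted inverse compressions: we look at the Schur complement $(\bar M_{\text{comp}})^{-1}$ restricted to the fixed-rank block, and show that its entries are not concentrated on any single value of $\F$. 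The degenerate case, where kernel vectors are almost constant, corresponds to eigenvalue $d$ (the all-ones vector) or to highly structured sparse vectors. These are excluded because $w\ne d$, and because $w\ne 0$ rules out the kernel coming from repeated rows (two vertices with equal out-neighbourhoods, which occurs with probability bounded away from $0$ but only gives corank over $w=0$).

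The main obstacle will be the structured-vector case over $\F$. We must rule out kernel vectors of $\bar M$ supported on or constant on small sets, for instance vectors arising from small dense subgraphs or short cycles, which for fixed $d$ occur with non-vanishing probability. Our first attempt will use the local tree-like structure of the random regular digraph. Any kernel vector with small support forces a local configuration in which $w$ is an eigenvalue of a small $d$-regular-like pattern with boundary. We will show that the probability of such configurations containing $r$ independent kernel directions tends to zero, or is $O(1/p)$ uniformly in $N$. We then let $p\to\infty$ at the end to conclude the $o_{d,w,c}(1)$ bound.
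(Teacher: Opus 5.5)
There is a genuine gap at the step you call the refinement. You assert that $s_{\min}(M)\le e^{-cN}$ forces $M$, and hence $\bar M$ for every $\mathfrak p$, to have corank at least $r(c)\ge1$, or forces some fixed-size compression to be singular modulo many primes. As a deterministic statement this is false, and nothing in your outline makes it true with high probability. A bounded invertible matrix over $\Z[i]$ can have unit determinant and exponentially small least singular value. For example, the bidiagonal matrix with $1$ on the diagonal and $-2$ on the superdiagonal has inverse entries $2^{j-i}$, yet it is invertible modulo every prime.

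The obstruction is quantitative. One small singular value gives one approximate relation at scale $e^{-cN}$. A nonzero Gaussian-integer quantity built from $M^{-1}$ is only bounded below by a negative power of $|\det M|\le\Lambda^{N/2}$, where $\Lambda=\Lambda(d,w)$ bounds the squared row norms of $M$. So for small $c$ no minor is forced to vanish. Row-wise Hadamard actually gives $s_{\min}(M)\ge N^{-1}\Lambda^{-(N-1)/2}$, not $e^{-CN\log N}$. The useful consequence of $|\det M|\ge1$ and $\|A-wI\|\le d+|w|$ is different: on the invertible event, at most $R(c)=O(1)$ singular values lie below $e^{-cN/2}$. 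One can therefore find $r\le R$ and $h\asymp N$ with $\tau_r\le e^{-h}$ and $\tau_{r+1}\ge e^{-h/b}$.

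Your switching argument over $\F$, even if completed, controls only exact singularity of $\bar M$. A per-compression bound $\rho(p)+o(1)$ also does not survive a union bound over the $N^{O(r)}$ possible compressions.

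What is missing is a mechanism that \emph{amplifies} the single small singular value into many exponentially accurate relations; this is the core of the paper's proof.
\begin{enumerate}
\item By expansion, a positive proportion of the vectors $a_e=e_i-e_j$ have projections of size at least $N^{-O(1)}$ onto both bottom singular subspaces.
\item Applying $r$ such column transpositions $A\mapsto AP_e$ lifts the small singular values, reaching an endpoint $A'$ with $s_{\min}(A'-wI)\ge N^{-O(1)}e^{-h/b}$.
\item Undoing the last transposition gives the resonance $|a_e^{\mathsf T}Ga_e+1/w|\le e^{-\kappa N}$, where $G=(A'-wI)^{-1}$.
\item The transposition chain is reversible with uniform stationary law. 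Comparing forward and reverse path counts therefore reduces the theorem to showing that, for uniform $A'$, the expected fraction of resonant pairs is $o(1)$.
\end{enumerate}

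Only at this last step does integrality enter. If a positive fraction of pairs were resonant, Erd\H{o}s--Simonovits supersaturation would give a positive fraction of completely resonant anchored pairs $(X,Y)$ with $|X|=|Y|=\ell+1$. Polarization then bounds every entry of $L_X^{\mathsf T}(G+G^{\mathsf T})L_Y$ by $4e^{-\kappa N}$. On the other hand, Cauchy--Binet and Jacobi write its determinant as $q^\ell Z/(\det B)^2$, with $B=q(A'-wI)$, $Z\in\Z[i]$ and $|\det B|^2\le\Lambda^N$. Taking $\ell\kappa>\log\Lambda$ forces $Z=0$, which is an exact vanishing of a fixed-size shifted inverse compression.

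The paper then averages over a random $(X,Y)$ rather than union-bounding. This lets the finite-field estimate modulo a large split prime bound the vanishing probability by $1/(p-1)+o(1)$; that estimate is M\'esz\'aros's mixing theorem for $d\ge3$, and a pairing formula with a local limit theorem for $d=2$. Without the transposition paths, the supersaturation step, and the path-counting transfer, your outline never reaches an event that a finite-field estimate can control.
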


To prove Theorem \ref{thm:small} for $d\geq3$, our starting point is the finite-group mixing estimates in \cite{Meszaros}, which allow us to control shifted inverse compressions over finite fields. By reducing modulo suitable primes, we first show that the probability that $A-wI$ is singular goes to 0. To obtain the small-ball estimate, our main idea is to lift the smallest singular values by a bounded number of column transpositions. This is possible by combining an arithmetic determinant bound with graph expansion. Reversing the last transposition then gives an exponentially accurate relation involving the shifted inverse. We further use graph supersaturation and determinant identities to show that too many such relations would force an inverse compression to be singular. The finite-field estimates show that this event has vanishing probability. Comparing the numbers of forward and reverse paths then gives the desired bound. For $d=2$, we obtain the required finite-field estimates using an exact pairing formula and a lattice local limit theorem. 

We need to address three issues to deduce Theorem \ref{thm:okm} from Theorem \ref{thm:small}: (i) The condition \eqref{1.2} requires the bound for almost every $w$, not only Gaussian rationals; (ii) The exponent is any (small) positive power of $N$, not only linear; (iii) The work \cite{AdhikariDembo} does not cover the case $d=2$. Thus we need additional inputs to conclude Theorem \ref{thm:okm}.

To overcome (i), we approximate the integral w.r.t.\,the shift in Girko's Hermitization by a Riemann sum. It is important to perform this before transferring the eigenvalues to singular values, as the eigenvalues of $A-w$ have a clean, linear dependence on $w$. Since we are interested in the global law, it suffices to split the integration domain into a fixed number of regions. Thus we only need the small-ball estimate on any dense subset of the complex plane.

For issue (ii), we prove that for each fixed admissible $w$, the number of singular values of $A-w$ below $N^{-K}$ is bounded in probability for some fixed $K=K(d,w)$. Since $c>0$ in Theorem \ref{thm:small} can be arbitrarily small, the logarithm of the reciprocal of the smallest singular value is $o_{\mathbb P}(N)$. Thus the total contribution of these singular values to the normalized logarithmic determinant vanishes in probability. The remaining small singular values are controlled by the Hermitized local law in \cite{AdhikariDembo} for $d\geq3$.

For issue (iii), we prove the required averaged Green-function estimate for $d=2$. The proof uses a permutation representation of the degree-two model and the quantitative comparison estimates in \cite{CGVTH}. Together with the estimates in (ii), this gives logarithmic uniform integrability in probability at the fixed grid points and allows us to conclude the global law.

\subsection*{Other related works}
Haagerup and Larsen calculated the Brown measure of the sum of $d$ free
Haar unitaries and obtained the density $h_d$
\cite[Example~5.5]{HaagerupLarsen}.  Basak and Dembo proved convergence to
this measure for sums of independent Haar unitary or orthogonal matrices
\cite{BasakDembo}.  For sums of independent permutation matrices,
Basak, Cook, and Zeitouni proved the circular law in a growing-degree
regime \cite{BasakCookZeitouni}.

For independent uniform permutation matrices, Bordenave and Collins proved strong
asymptotic freeness on the orthogonal complement of the constant vector
\cite{BordenaveCollins}.  Coste proved the directed analogue of Alon's
spectral-gap conjecture \cite{Coste}.  Coste, Lambert, and Zhu studied the
fluctuations of the characteristic polynomial for sums of independent
permutation matrices.  They also obtained the spectral-gap result for
uniform simple regular digraphs by contiguity \cite{CosteLambertZhu}.

The unshifted invertibility problem was studied by Cook, Huang, and
Nguyen--Pan, among others
\cite{CookSingularity,HuangInvertibility,NguyenPan}.  The work
\cite{LLTTYStructure} studies the structure of approximate eigenvectors
of random regular digraphs.  In the sparse independent case, Sah,
Sahasrabudhe, and Sawhney proved the limiting law at fixed mean degree
using a singular-value-window method \cite{SahSahasrabudheSawhney}.

\subsection{Outline of proof}
\subsubsection*{Hermitization and the hard edge}
For an $N\times N$ matrix $B$, write $\tau_1(B)\leq\cdots\leq\tau_N(B)$
for its singular values, so that $s_{\min}(B)=\tau_1(B)$.  For $\lambda>0$,
put $n_\lambda(B)=\#\{j:\tau_j(B)<\lambda\}$.  For a fixed shift
$w\in\C$, put $C_w=A-wI$ and define
the symmetrized singular-value measure
\[
 \nu_N^w=\frac1{2N}\sum_{j=1}^N
 \bigl(\delta_{\tau_j(C_w)}+\delta_{-\tau_j(C_w)}\bigr)\,.
\]
Girko's identity relates the eigenvalue measure to the logarithmic potential
\[
 U_A(w):=\frac1N\log|\det(A-wI)|
        =\int_{\R}\log|x|\,\dd\nu_N^w(x)\,.
\]
Weak convergence of $\nu_N^w$ controls the integral away from zero, so it
remains to prove logarithmic uniform integrability at zero.  Write
$\log_+x=\max(\log x,0)$ and $F_{N,w}(y)=N^{-1}n_y(C_w)$ for $y>0$.
For $T>0$, put
\[
 R_{N,T}(w)=\frac1N\sum_{j=1}^N
 \left(\log_+\frac1{\tau_j(C_w)}-T\right)_+\,.
\]
Take $w\in\Q(i)\setminus\Z[i]$, so that $C_w$ is nonsingular.  For fixed
$K>0$ and $T<K\log N$, we have
\[
 \begin{split}
 R_{N,T}(w)
 =\int_T^{K\log N}F_{N,w}(e^{-u})\,\dd u+\frac1N\sum_{\tau_j(C_w)<N^{-K}}
       \log\frac{N^{-K}}{\tau_j(C_w)}\,.
 \end{split}
 \eqlabel{intro-three-window}
\]
For $d\geq3$, the local law of Adhikari and Dembo gives constants $C>0$ and
$\beta=\beta(d)>0$ such that, with probability tending to one,
\[
 F_{N,w}(y)\leq C(y\vee N^{-\beta})
\]
uniformly on compact sets of shifts.  Thus the first term in
\eqref{intro-three-window} is at most $Ce^{-T}+o(1)$.  For
$d=2$, we prove the corresponding integrated estimate below.

In Section~\ref{sec:count}, we prove that for some fixed $K=K(d,w)$,
\[
 n_{N^{-K}}(C_w)=O_{\Pp}(1)\,.
 \eqlabel{intro-poly-count}
\]
Together with Theorem~\ref{thm:small}, this controls the second term in
\eqref{intro-three-window}, since
\[
 0\leq\frac1N\sum_{\tau_j(C_w)<N^{-K}}
       \log\frac{N^{-K}}{\tau_j(C_w)}
 \leq\frac{n_{N^{-K}}(C_w)}N
       \log_+\frac1{s_{\min}(C_w)}=o_{\Pp}(1)\,.
\]
Finally, Lemma~\ref{lem:grid} approximates Girko's integral by a finite
Riemann sum with nodes in $\Q(i)\setminus\Z[i]$.  The estimates above give
convergence of $U_A$ simultaneously at these nodes.  We first choose the
truncation level $T$ large and the mesh small, both independently of $N$,
and then let $N\to\infty$.  This proves Theorem~\ref{thm:okm} using only
the arithmetic shifts covered by Theorem~\ref{thm:small}.

\subsubsection*{The least singular value}
The transposition arguments are carried out in the uniform model with
loops allowed.  Fix $c>0$ and $w\in\Q(i)\setminus\{0,d\}$.  Choose a positive
integer $q$ with $qw\in\Z[i]$ and put $C=A-wI$.  On the event
$0<s_{\min}(C)\leq e^{-cN}$, the arithmetic bound
$|\det C|\geq q^{-N}$ and the operator norm bound $\|C\|\leq d+|w|$
isolate a bounded number of small singular values.  More precisely, for
$b>1$ sufficiently close to one, we find an integer $r$ bounded
independently of $N$ and a scale $h$ comparable to $N$ such that
\[
 \tau_r(C)\leq e^{-h}\,,
 \qquad
 \tau_{r+1}(C)\geq e^{-h/b}\,.
\]
We use a weighted decomposition of the logarithms of the singular values
to integrate over the possible gaps.

Let $e_i$ be the $i$th standard basis vector.  For an unordered pair
$e=\{i,j\}$, put
$a_e=e_i-e_j$, $P_e=I-a_ea_e^*$, and $T_eA=AP_e$.  This exchanges columns
$i$ and $j$, with
\[
 T_eA-wI=CP_e-wa_ea_e^*\,.
\]
Small-set expansion shows that a positive proportion of the vectors $a_e$
have a nontrivial projection onto both bottom singular subspaces.  A path
of $r$ such transpositions lifts the small singular values to give a shifted
endpoint $B$ with $s_{\min}(B)\geq N^{-O(1)}e^{-h/b}$.  Reversing the last
transposition gives
\[
 \left|1+w a_e^*B^{-1}a_e\right|
 \leq N^{O(1)}e^{-(1-1/b)h}\,.
 \eqlabel{intro-resonance}
\]
For $d\geq3$, M\'esz\'aros's finite-group mixing theorem \cite{Meszaros}
controls fixed shifted inverse compressions modulo suitable primes.
Gaussian-integer separation and graph supersaturation
\cite{ErdosSimonovits} then show that the expected number of pairs satisfying
\eqref{intro-resonance} is $o(N^2)$ under the uniform law.  Comparing the
numbers of forward and reverse paths transfers this bound to the initial
matrix.  The finite-field estimate also excludes exact singularity.  Integration
over $h$ completes the proof, and conditioning gives the zero-diagonal case.

\subsubsection*{The polynomial hard edge}
We prove \eqref{intro-poly-count} in two stages.  First, we rule out
approximate kernel vectors of polynomially large norm with a level set
containing a positive proportion of their coordinates.  If such vectors
existed, rescaling and graph expansion would give a limiting row law
$(X,Y_1,\ldots,Y_d)$ satisfying
\[
 Y_1+\cdots+Y_d=wX\,,
\]
where all marginals have the same nonconstant law with an atom.  We find a
finite Borel partition $\mathcal P$ with the following property.  Writing
$\mathcal P(X)$ for the cell containing $X$ and $H$ for Shannon entropy,
\[
 H(\mathcal P(X),\mathcal P(Y_1),\ldots,\mathcal P(Y_d))
       -dH(\mathcal P(X))<0\,.
\]
Colouring the coordinates by this partition contradicts an exponential
counting bound in the configuration model.  This gives an augmented
rectangular invertibility estimate for $A-wI$ and its adjoint.

The second stage turns this estimate into a count of small singular values.
Set $C=A-wI$, $\lambda=N^{-K}$, and
\[
 \Psi_\lambda(A)=\log\det(C^*C+\lambda^2I)\,.
\]
Fix an integer $s\geq1$.  On the event where the rectangular invertibility
estimate holds, if $n_\lambda(C)\geq s$, unit vectors in the bottom left and
right singular subspaces have spread coordinates.  Hence both projection
norms have a polynomial lower bound for more than half of the transposition
labels.  A regularized determinant
identity shows that $\Psi_\lambda$ increases along these labels.  Orienting
the transposition edges in the direction of increase and counting the
boundary of $\{A:n_\lambda(A-wI)\geq s\}$ gives, for some $\vartheta>1/2$,
\[
 \limsup_{N\to\infty}
 \Pp\{n_\lambda(A-wI)\geq s\}\leq(2\vartheta)^{-s}\,.
\]
This proves the tightness in \eqref{intro-poly-count}, which transfers to
the zero-diagonal model by conditioning.

\subsubsection*{Degree two}
For $d=2$, we prove the finite-field estimates using an exact pairing formula
and a lattice local limit theorem in Subsection~\ref{subsec:d2}.  To obtain
the local estimate, let $P$ be a uniform permutation matrix and $Q$ an
independent permutation matrix whose law assigns weight $2^{-k}$ to each
permutation with $k$ cycles and no fixed points.  In the uniform degree-two
model with loops allowed,
\[
 A\eqdist P(I+Q)\,.
\]
We couple $Q$ to a uniform permutation so that the expected rank of their
difference is $O(\log N)$, and apply the comparison theorem of Chen--Garza-Vargas--Tropp--van Handel
\cite{CGVTH}.  With $\tr$ denoting the unnormalized trace, put, for $\eta>0$,
\[
 H_w=\begin{pmatrix}0&C_w\\ C_w^*&0\end{pmatrix}\,,
 \qquad
 m_{N,w}(\ii\eta)=\frac1{2N}\tr(H_w-\ii\eta)^{-1}\,.
\]
Computing the Green function of the limiting free operator gives, uniformly on
compact sets of shifts and for $0<\eta\leq1$,
\[
 \E\,\im m_{N,w}(\ii\eta)
 \leq C+\frac{C}{N\eta^6}+\frac{C\log N}{N\eta}\,.
\]
Taking $\eta=N^{-1/7}$ gives the mesoscopic estimate needed in
\eqref{intro-three-window}.  Conditioning transfers the result to the
zero-diagonal model.

\subsection*{Organization} Section~\ref{sec:inputs} proves the finite-field compression estimates,
model transfer, and expansion.  Section~\ref{sec:compression} proves the
shifted inverse-compression estimate and excludes reverse resonance.
Section~\ref{sec:fixed-degree} combines these inputs along transposition
paths to prove Theorem~\ref{thm:small}.  Section~\ref{sec:count} proves the
polynomial hard-edge count.  Section~\ref{sec:girko} establishes logarithmic
uniform integrability and completes the proof of Theorem~\ref{thm:okm}.

\subsection*{Acknowledgment} YH is supported by NSFC No.\,12322121 and the National Key R\&D Program of China No.\,2023YFA1010400. The research of JH is supported by NSF grants DMS-2331096 and DMS-2337795, and the Sloan Research Award.

\subsection*{LLM disclosure} GPT-5.6 assisted the proof development at the level of a coauthor. The authors are responsible for the content.

\section{Finite-field compression, model transfer, and expansion}
\label{sec:inputs}

\subsection*{Notation}
We write $\mathbb N_+=\{1,2,\ldots\}$, $[N]=\{1,\ldots,N\}$, and
$\one=(1,\ldots,1)^{\mathsf T}$.  The norm $\|\cdot\|$ is the Euclidean
norm for vectors and the operator norm for matrices.  We use the two
matrix models
\[
 \cM_{N,d}=\{B\in\{0,1\}^{N\times N}:
 B\one=d\one\,,\ B^{\mathsf T}\one=d\one\}\,,
 \qquad
 \cM^0_{N,d}=\{B\in\cM_{N,d}:\diag B=0\}\,.
\]
The first allows self-loops, while the second is the simple,
zero-diagonal model used in the main results.  Throughout the proof,
$d$ is fixed and $N\to\infty$.  The shift $w=d$ is excluded since
$A\one=d\one$.

For an $N\times N$ matrix $B$, let $\lambda_j(B)$ denote its eigenvalues
counted with algebraic multiplicity.  We write
\[
 L_B=\frac1N\sum_{j=1}^N\delta_{\lambda_j(B)}\,,
\]
and denote by $\mu_d$ the probability measure with density $h_d$.

For $x\in\R$ and $k\in\mathbb N_+$, write
$(x)_k=x(x-1)\cdots(x-k+1)$, with $(x)_0=1$.

\subsection{A finite-field compression}

For $d\geq2$, give each row vertex and each column vertex $d$ labelled
half-edges, match the two sets of $dN$ half-edges uniformly, and let
$\widehat A$ be the resulting biadjacency matrix.  Its entries may
exceed one.  For $d\geq3$, let $P_1,\ldots,P_d$ be independent uniform
permutation matrices.  Put
\[
 \mathcal A_{N,d}=
 \begin{cases}
  \widehat A\,,&d=2\,,\\
 P_1+\cdots+P_d\,,&d\geq3\,.
 \end{cases}
\]
The finite-field argument uses $\mathcal A_{N,d}$, while augmented
expansion and the polynomial hard-edge argument use $\widehat A$.

For $z\in V^N$, let $\Aff\{z_1,\ldots,z_N\}$ denote the smallest
coset of a subgroup of $V$ containing all coordinates of $z$.  We use
the following specialization of Theorem~1.5 of M\'esz\'aros
\cite{Meszaros}.  If $d\geq3$ and $V$ is a fixed finite abelian group,
then
\[
 \sum_{\substack{z=(z_1,\ldots,z_N)\in V^N:\
                  \Aff\{z_1,\ldots,z_N\}=V}}
 \max_{y\in V^N}
 \left|
 \Pp((P_1+\cdots+P_d)z=y)-
 \frac{\mathbf 1_{\{\sum_jy_j=d\sum_jz_j\}}}{|V|^{N-1}}
 \right|=o(1)\,.
 \eqlabel{meszaros}
\]
We use this summed error below.  For $d=2$, the shifted form is proved in
Subsection~\ref{subsec:d2}.

For an integer $\ell\geq1$, let $X,Y\subset[N]$ be disjoint sets of
size $\ell+1$, choose anchors $x_0\in X$ and $y_0\in Y$, and define
the $N\times\ell$ difference matrices
\[
 L_X=(e_x-e_{x_0})_{x\in X\setminus\{x_0\}}\,,
 \qquad
 L_Y=(e_y-e_{y_0})_{y\in Y\setminus\{y_0\}}\,.
 \eqlabel{difference-matrices}
\]

\begin{proposition}
\label{prop:ff-comp}
Fix $d\geq2$, an odd prime $p$, an integer $\ell\geq1$, and
\[
 \omega\in\F_p\setminus\{0,d\}\,.
\]
Reduce $L_X,L_Y$ modulo $p$ and put
$E=(L_X,L_Y)\in\F_p^{N\times2\ell}$.  Uniformly over the disjoint
anchored sets,
\[
 \Pp\left(
 \begin{array}{c}
 \mathcal A_{N,d}-\omega I\text{ is invertible over }\F_p\,,\\
 \det L_X^{\mathsf T}\bigl((\mathcal A_{N,d}-\omega I)^{-1}
 +(\mathcal A_{N,d}-\omega I)^{-\mathsf T}\bigr)L_Y=0
 \end{array}
 \right)
 \leq\frac1{p-1}+o(1)\,.
 \eqlabel{ff-proj}
\]
\[
 \Pp(\mathcal A_{N,d}-\omega I\text{ is singular over }\F_p)
 \leq\frac1{p-1}+o(1)\,.
 \eqlabel{ff-sing}
\]
The errors may depend on $d$, $p$, and $\ell$.
\end{proposition}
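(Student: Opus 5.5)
Throughout, write $M=\mathcal A_{N,d}-\omega I$. The plan is to turn both statements into questions about the linear map $z\mapsto Mz$ on $V^N$ for small auxiliary groups $V=\F_p^k$, and then apply the mixing estimate \eqref{meszaros} for $d\geq3$; for $d=2$ we would use the shifted analogue from Subsection~\ref{subsec:d2}.

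The basic observation is a counting one. For each $y$, let $\#\{z:Mz=y\}$ denote the number of solutions. Summing over $y$ gives
\[
\sum_{y}\#\{z:Mz=y\}=p^{N}.
\]
We therefore compare $\E\,\#\{z\in\F_p^N:Mz=0\}=p^{\dim\ker M}$ with its value under the equidistributed model. Since
\[
\Pp(Mz=0)=\Pp((P_1+\cdots+P_d)z=\omega z),
\]
\eqref{meszaros} applied with target $y=\omega z$ gives the following. For $z$ with $\Aff\{z_i\}=\F_p$, this probability is $p^{-(N-1)}\mathbf 1_{\{\omega\sum z=d\sum z\}}+\mathrm{err}(z)$, and $\sum_z\mathrm{err}(z)=o(1)$. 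Because $\omega\neq d$, the indicator forces $\sum_j z_j=0$. There are $p^{N-1}$ such $z$, so the main term contributes at most $1$.

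The $z$ with $\Aff\neq\F_p$ are the constant vectors $z=c\one$. Then $Mz=(d-\omega)c\one$, which vanishes only for $c=0$, so these contribute exactly $1$. Hence
\[
\E\bigl(p^{\dim\ker M}-1\bigr)\leq 1+o(1),
\]
and since $p^{\dim\ker M}-1\geq p-1$ on the singular event, Markov's inequality gives \eqref{ff-sing}.

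For \eqref{ff-proj}, take $G=M^{-1}+M^{-\mathsf T}$. The event $\det L_X^{\mathsf T}GL_Y=0$ says there is a nonzero $u\in\F_p^\ell$ with $L_X^{\mathsf T}GL_Yu=0$. We encode this through the solution $z=M^{-1}L_Yu$ together with the companion vector $z'=M^{-\mathsf T}L_Yu$. The condition then becomes the linear constraint $L_X^{\mathsf T}(z+z')=0$, that is, $z_x+z'_x=z_{x_0}+z'_{x_0}$ for all $x\in X$.

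To handle the transpose we use that $\mathcal A^{\mathsf T}\eqdist\mathcal A$ jointly with the mixing. We run a second-moment computation over pairs, applying \eqref{meszaros} with $V=\F_p^2$ to the pair $(Mz,M^{\mathsf T}z')$. This requires rewriting $M^{\mathsf T}z'=L_Yu$ as a condition on the permutations. Since $P_i^{\mathsf T}=P_i^{-1}$, the pair $(z,z')$ maps, through the vectors $P_iz$ and $P_i^{-1}z'$, to jointly equidistributed values.

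We would count the expected number of nonzero $u$ for which the $|X|-1=\ell$ linear conditions on $z+z'$ hold. The equidistribution heuristic says $(z_x+z'_x)_{x\in X}$ is uniform on the appropriate coset, so each nonzero $u$ satisfies the condition with probability about $p^{-\ell}$. Summing over the $p^\ell-1$ nonzero $u$ and dividing by the $p-1$ scalings of each solution (solutions come in lines) yields $1/(p-1)+o(1)$.

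Degenerate $u$, where $L_Yu$ is itself not spread out, are again handled through the $\Aff$ dichotomy. Here we need $X\cap Y=\emptyset$ and anchoring to guarantee that the main-term indicator imposes no extra relation.

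The main obstacle is this transpose step: \eqref{meszaros} controls only $(P_1+\cdots+P_d)z$, whereas $G$ mixes $M^{-1}$ and $M^{-\mathsf T}$. Our first attempt would be to reformulate via the symmetric quadratic form $v\mapsto v^{\mathsf T}M^{-1}v$. Since $L_X^{\mathsf T}GL_Y$ is the polarization of this form restricted to $\mathrm{span}(L_X)\times\mathrm{span}(L_Y)$, we can use $q(v)=v^{\mathsf T}M^{-1}v$ and write $Mz=v$, $q(v)=v^{\mathsf T}z$. This expresses everything through the single map $z\mapsto Mz$ on $(\F_p^{k})^N$ with $k=2\ell$ copies, to which the summed mixing bound applies directly with $V=\F_p^{2\ell}$, constrained by $\Aff=V$.
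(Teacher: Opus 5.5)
Your argument for \eqref{ff-sing} when $d\geq3$ is correct and matches the paper's. It is a first-moment count of kernel vectors via \eqref{meszaros} with target $y=\omega z$: the condition $\omega\neq d$ forces $\sum_j z_j=0$, the constant vectors contribute only $z=0$, and Markov's inequality uses the $p-1$ nonzero multiples in any nontrivial kernel.

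The gap is in \eqref{ff-proj}. Your main line applies \eqref{meszaros} with $V=\F_p^2$ to the pair $(Mz,M^{\mathsf T}z')$. But \eqref{meszaros} only describes the law of $(P_1+\cdots+P_d)Z$ for a fixed $Z\in V^N$, that is, one matrix acting on all coordinates of $V$ at once. It says nothing about the joint law of $\mathcal A_{N,d}z$ and $\mathcal A_{N,d}^{\mathsf T}z'$, and $\mathcal A^{\mathsf T}\eqdist\mathcal A$ is only a marginal symmetry. So the claimed equidistribution of $(z_x+z'_x)_{x\in X}$ is unsupported, and with it the probability $p^{-\ell}$ per nonzero $u$; you call it a heuristic yourself. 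The treatment of ``degenerate $u$'' is likewise not an argument.

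The idea in your last paragraph is the right one, and it removes the transpose problem outright, with no pair construction needed. Put $r=2\ell$ and write $T=E^{\mathsf T}M^{-1}E$ in $\ell\times\ell$ blocks. Then exactly $L_X^{\mathsf T}(M^{-1}+M^{-\mathsf T})L_Y=T_{12}+T_{21}^{\mathsf T}$. What is missing is the quantitative step: you must show $\Pp(M\text{ invertible},\ E^{\mathsf T}M^{-1}E=T)\leq p^{-r^2}+o(1)$ uniformly in $T\in\F_p^{r\times r}$.

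The paper does this by letting $Z_T$ count the $Z\in\F_p^{N\times r}$ with $MZ=E$ and $E^{\mathsf T}Z=T$, in four steps:
\begin{itemize}
\item Every solution satisfies $\one^{\mathsf T}Z=0$ (sum the rows, using $\omega\neq d$ and $\one^{\mathsf T}E=0$).
\item The rows of every solution affinely span $V$, because the rows of $E$ contain $0$ and every standard basis vector. This is where disjointness and anchoring enter.
\item The fiber $\{\one^{\mathsf T}Z=0,\ E^{\mathsf T}Z=T\}$ has $p^{r(N-r-1)}$ points since $\rank[\one,E]=r+1$. Only $O(p^{(r-1)N})$ of them are affinely degenerate, and for each of the others $\Pp(MZ=E)=p^{-r(N-1)}$ up to the summed error in \eqref{meszaros}. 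Hence $\E Z_T=p^{-r^2}+o(1)$.
\item Uniqueness of the solution on the invertible event turns this into the pointwise bound.
\end{itemize}
Finally, $T\mapsto T_{12}+T_{21}^{\mathsf T}$ pushes the uniform law forward to the uniform law on $\F_p^{\ell\times\ell}$. Summing over $T$ therefore gives $1-\prod_{j\leq\ell}(1-p^{-j})+o(1)\leq 1/(p-1)+o(1)$.

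For $d=2$ you only defer to Subsection~\ref{subsec:d2}, but that subsection is itself the proof of this proposition in that case. No summed-error analogue of \eqref{meszaros} is available there. The paper builds the needed estimates directly, using a pairing formula, an entropy deficit, a sparse-quotient bound, and a local limit theorem.
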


\begin{proof}
We first prove the proposition for $d\geq3$.  The case $d=2$ is
proved in Subsection~\ref{subsec:d2}.  Put
\[
 r=2\ell\,,\qquad V=\F_p^r\,,
\]
and retain $E=(L_X,L_Y)$ for its reduction modulo $p$.  The rows of
$E$ include zero and every standard basis vector, while every column
has coordinate sum zero.  Hence
\[
 \Aff\{E_1,\ldots,E_N\}=V\,,
 \qquad \one^{\mathsf T}E=0\,.
\]
For $T\in\F_p^{r\times r}$,
let $Z_T$ count
the matrices $Z\in\F_p^{N\times r}$ satisfying
\[
 ((P_1+\cdots+P_d)-\omega I)Z=E\,,
 \qquad E^{\mathsf T}Z=T\,.
 \eqlabel{finite-field-ZT}
\]
Every solution satisfies
\[
 (d-\omega)\one^{\mathsf T}Z=\one^{\mathsf T}E=0\,,
 \qquad \one^{\mathsf T}Z=0\,.
 \eqlabel{ff-cons}
\]
Its rows also affinely span $V$.  Indeed, if the affine span of its
rows is $a+H$, then the rows of the left side of
\eqref{finite-field-ZT} lie in $(d-\omega)a+H$.  The rows of $E$
affinely span $V$, so $H=V$.

The affine-span assumption also implies
$\rank[\one,E]=r+1$.  Hence the fiber
\[
 \{Z:\one^{\mathsf T}Z=0\,,\ E^{\mathsf T}Z=T\}
\]
has $p^{r(N-r-1)}$ elements, of which at most
$O_{p,r}(p^{(r-1)N})$ have rows in a proper affine subspace.  For every
other $Z$, the target $E+\omega Z$ belongs to the support in
\eqref{meszaros}.  Summing \eqref{meszaros} over the fiber gives
\[
 \E Z_T
 =p^{r(N-r-1)}p^{-r(N-1)}
  +O_{p,r}(p^{-N+r})+o(1)
 =p^{-r^2}+o(1)\,.
\]
On the invertible event the solution is unique.  Thus, uniformly in
$T\in\F_p^{r\times r}$,
\[
 \Pp\bigl(\mathcal A_{N,d}-\omega I\text{ is invertible}\,,
 E^{\mathsf T}(\mathcal A_{N,d}-\omega I)^{-1}E=T\bigr)
 \leq p^{-r^2}+o(1)\,.
 \eqlabel{finite-field-point}
\]
Write $T$ in four $\ell\times\ell$ blocks.  The projected matrix in
\eqref{ff-proj} equals $T_{12}+T_{21}^{\mathsf T}$.
For uniform $T$ this is a uniform $\ell\times\ell$ matrix, and
\[
 \Pp(\det(T_{12}+T_{21}^{\mathsf T})=0)
 =1-\prod_{j=1}^{\ell}(1-p^{-j})\leq\frac1{p-1}\,.
\]
Summing \eqref{finite-field-point} over these $T$ proves
\eqref{ff-proj}.

For the singularity bound, a nonzero kernel vector is nonconstant
because $\omega\neq d$, and \eqref{ff-cons} gives
$\sum_jz_j=0$.  Conversely, every nonconstant vector with this sum has
full affine span in $\F_p$.  Hence \eqref{meszaros} gives
\[
 \begin{split}
 \E\#\{0\neq z:(\mathcal A_{N,d}-\omega I)z=0\}
 &=\sum_{\substack{z:\ \Aff\{z_1,\ldots,z_N\}=\F_p\\
                         \sum_jz_j=0}}
 \Pp((\mathcal A_{N,d}-\omega I)z=0)\\
 &=(p^{N-1}+O_p(1))p^{-(N-1)}+o(1)=1+o(1)\,,
 \end{split}
 \eqlabel{finite-field-kernel}
\]
Every nontrivial kernel contains at least $p-1$ nonzero vectors, so
\eqref{finite-field-kernel} proves \eqref{ff-sing}.
\end{proof}

\subsection{Degree two over finite fields}\label{subsec:d2}

\begin{proof}[Proof of Proposition~\ref{prop:ff-comp} for $d=2$]
Fix $p,\ell,\omega,X$, and $Y$ as in
Proposition~\ref{prop:ff-comp}, let
$T\in\F_p^{2\ell\times2\ell}$, and put
\[
 r=2\ell\,,\qquad V=\F_p^r\,,\qquad E=(L_X,L_Y)\,.
\]
The matrix $E$ has at most $2\ell+2$ nonzero rows,
$\one^{\mathsf T}E=0$, and its rows affinely span $V$.  All estimates
below are uniform over the anchored sets and $T$.
Constants denoted by $C$ below may depend on the fixed
$p$, $\ell$, and $\omega$.

Write $Q=|V|$.  We estimate the entropy away from
the uniform law on $V$ and prove a local limit near it.  We first derive the
pairing formula.  For fixed
$p$ and $\ell$, simultaneous permutation invariance moves the support
of $E$ into a fixed set of $2\ell+2$ rows, after which only finitely
many $E$ remain.  It is therefore enough to prove the estimates for
each fixed $E$.

For
$z\in V^N$, let $n_a=\#\{i:z_i=a\}$.  Let
$(U_i,V_i)_{i\in[N]}$ be independent pairs of independent uniform
$V$-valued variables, and put
\[
 \mathcal C_a=\sum_{i=1}^N
   (\mathbf1_{\{U_i=a\}}+\mathbf1_{\{V_i=a\}})\,,
 \qquad S_i=U_i+V_i\,.
\]
Write $\mathcal C=(\mathcal C_a)_{a\in V}$,
$n=(n_a)_{a\in V}$, and $S=(S_i)_{i\in[N]}$.
Conditioning on $\mathcal C=2n$ makes the ordered list of the $2N$
variables a uniform ordering of the multiset containing two copies of
each $z_i$.  Bayes' formula therefore gives, for every $y\in V^N$,
\[
 \Pp(\widehat Az=y)
 =Q^{-N}\frac{\Pp(\mathcal C=2n\mid S=y)}
                  {\Pp(\mathcal C=2n)}\,.
 \eqlabel{d2-pairing-formula}
\]

We next identify the zero set of the entropy rate.  Put
$N_\eta=\#\{i:E_i=\eta\}$.  For $\eta,a,b\in V$, let
$m_{\eta,a,b}$ count the rows with $E_i=\eta$ and ordered stub
colours $(a,b)$; on such a row,
$z_i=\omega^{-1}(a+b-\eta)$.  Write
$m=(m_{\eta,a,b})$.  Thus
\[
 \sum_{a,b}m_{\eta,a,b}=N_\eta\,,
 \qquad
 n_c=\sum_{\eta,a+b-\eta=\omega c}m_{\eta,a,b}\,,
 \qquad
 \sum_{\eta,b}m_{\eta,c,b}+\sum_{\eta,a}m_{\eta,a,c}=2n_c\,,
\]
and therefore
\[
 \E\#\{z\in V^N:(\widehat A-\omega I)z=E\}
 =\sum_m
 \frac{\prod_\eta N_\eta!}
      {\prod_{\eta,a,b}m_{\eta,a,b}!}
 \frac{\prod_c(2n_c)!}{(2N)!}\,,
 \eqlabel{d2-inhom}
\]
where the sum is over the nonnegative integer tables satisfying the
three displayed constraints.  Here $H$, $I$, and $D$ denote Shannon
entropy, mutual information, and relative entropy, respectively.
Let $(\eta,U,V)$ have empirical law
$m_{\eta,a,b}/N$.  Since $\Pp(\eta\ne0)=O(N^{-1})$,
$H(\eta)=O((\log N)/N)$ and
\[
 |H(U,V\mid\eta)-H(U,V)|\leq H(\eta)
 =O((\log N)/N)\,.
\]
Thus replacing the conditional table entropy in
\eqref{d2-inhom} by the entropy of the pair below
changes the logarithm by $O(\log N)$, uniformly over all tables.

Let $\gamma$ be the empirical law of the pair $(U,V)$,
let $\alpha,\beta$ be its marginals, and let $\pi$ be the empirical
law of $z$.  Stub balance gives $(\alpha+\beta)/2=\pi$.  Stirling's
formula, applied to the fixed finite set of table cells, shows that the
logarithm of each summand in \eqref{d2-inhom} is $N$ times
\[
 H(\gamma)-2H(\pi)=-\mathcal D(\gamma,\pi)\,,
\]
up to $O(\log N)$, where the identity
\[
 \mathcal D(\gamma,\pi)
 =I_\gamma(U;V)+D(\alpha\Vert\pi)+D(\beta\Vert\pi)
 \geq0
 \eqlabel{d2-entropy-identity}
\]
follows from $(\alpha+\beta)/2=\pi$.  In a limiting table with
$\mathcal D=0$, the variables $U,V$ are independent with common law
$\pi$.  Write $*$ for convolution on $V$ and $\omega_*\pi$ for the
pushforward of $\pi$ under $a\mapsto\omega a$.  The row equation gives
\[
 \pi*\pi=\omega_*\pi\,.
 \eqlabel{d2-idem}
\]

The solutions of \eqref{d2-idem} are precisely the
uniform laws on linear subspaces of $V$.  To see this, Fourier
transformation gives
\[
 \widehat\pi(\chi)^2=\widehat\pi(\omega\chi)\,.
\]
Multiplication by $\omega$ permutes the finite dual group.  Iteration
around each orbit shows that the absolute value of each Fourier coefficient is zero or one.
The characters whose coefficients have absolute value one form a subgroup, and equality in the
triangle inequality then shows that $\pi$ is uniform on a coset
$a+H$.  Comparing supports in \eqref{d2-idem} gives
$(2-\omega)a\in H$.  Since $2-\omega$ is invertible, $a\in H$.

We shall also need quantitative stability near the uniform law on $V$.
Let $\upsilon$ denote the uniform law on $V$ and write
$\pi=\upsilon+f$.  Pinsker's inequality in
\eqref{d2-entropy-identity} gives
\[
 \mathcal D\geq c_V\bigl(
 \|\alpha-\beta\|_1^2+
 \|\gamma-\alpha\otimes\beta\|_1^2\bigr)\,.
 \eqlabel{d2-Pinsker}
\]
Let $\gamma_+$ be the pushforward of $\gamma$ under $(a,b)\mapsto a+b$.
Since $E_i\neq0$ on only $O(1)$ rows,
\[
 \|\gamma_+-\omega_*\pi\|_1=O(N^{-1})\,.
\]
Writing $\alpha=\pi+h$ and $\beta=\pi-h$, we have
\[
 \alpha*\beta=\pi*\pi-h*h\,.
\]
Contraction under the sum map and \eqref{d2-Pinsker} therefore yield
\[
 \begin{split}
 \|\omega_*\pi-\pi*\pi\|_1
 \leq O(N^{-1})+\|\gamma-\alpha\otimes\beta\|_1+\|h*h\|_1\leq C_V(\sqrt{\mathcal D}+\mathcal D)+O(N^{-1})\,.
 \end{split}
\]
On the other hand,
$\pi*\pi=\upsilon+f*f$ and
$\omega_*\pi=\upsilon+\omega_*f$, so
\[
 \|\omega_*\pi-\pi*\pi\|_1
 =\|\omega_*f-f*f\|_1
 \geq\|f\|_1-\|f\|_1^2\,.
\]
Hence, in a fixed small neighbourhood of $\upsilon$,
\[
 \mathcal D\geq c_{V,\omega}
 \biggl(\|\pi-\upsilon\|_1-\frac{C_{V,\omega}}N\biggr)_+^2
 \eqlabel{d2-deficit}
\]
Away from small neighbourhoods of the finitely many uniform laws on
subspaces, compactness and the preceding classification give a fixed
positive entropy gap.  Indeed, after choosing disjoint sufficiently small
neighbourhoods of the finitely many laws uniform on subspaces
$H\leq V$, the feasible limiting tables in
their complement form a compact set on which $\mathcal D$ has a
strictly positive minimum.  Equation \eqref{d2-deficit}
applies near the uniform law on $V$; the following
quotient estimate applies near the uniform laws on proper subspaces.

Let $W$ be a nonzero
quotient of $V$, let $e\in W^N$ have support in a fixed set of size
$s$, and fix $x\in W^N$.  Put
\[
 k=|\operatorname{supp}x|\,,
 \qquad l=|\operatorname{supp}(\omega x+e)|\,.
\]
The $2k$ nonzero quotient-coloured column stubs occupy a uniform
$2k$-subset of the $2N$ ordered row slots.  If
$(\widehat A-\omega I)x=e$, every row in
$\operatorname{supp}(\omega x+e)$ receives at least one such stub,
whereas every hit row outside this support receives two.  If $c$ of
the former rows receive two nonzero stubs, then
$j=k-(l+c)/2$ rows outside the target support receive two.  Therefore
\[
 \Pp((\widehat A-\omega I)x=e)
 \leq\frac{1}{\binom{2N}{2k}}
 \sum_{c=0}^l
 \mathbf1_{\{2k-l-c\in2\Z_{\geq0}\}}
 \binom lc2^{l-c}\binom{N-l}{k-(l+c)/2}\,.
 \eqlabel{d2-support-bound}
\]
For $1\leq k\leq N/4$, the standard binomial estimates give
\[
 \Pp((\widehat A-\omega I)x=e)
 \leq C^k(k/N)^{k+l/2}\,.
 \eqlabel{d2-supp-simple}
\]
Indeed, in a term of \eqref{d2-support-bound} the denominator leaves
the power
$2k-j=k+(l+c)/2$, and $(\e k/j)^j\leq C^k$.

Write $k_0=|\operatorname{supp}x\setminus\operatorname{supp}e|$
and $k_1=k-k_0$.  Since $\omega\neq0$, one has $l\geq k_0$.
After summing \eqref{d2-supp-simple} over the support and the
nonzero values of $x$, one obtains
\[
 \sum_{\substack{x:\ |\operatorname{supp}x\setminus
                         \operatorname{supp}e|=k_0\\
                         |\operatorname{supp}x\cap
                         \operatorname{supp}e|=k_1}}
 \Pp((\widehat A-\omega I)x=e)
 \leq C_{W,s}^{k+1}
 \left(\frac{k}{k_0\vee1}\right)^{k_0}
 \left(\frac{k}{N}\right)^{k_0/2+k_1}\,.
 \eqlabel{d2-support-sum}
\]
Indeed, there are at most $\binom N{k_0}2^s$ choices of the support
and at most $(|W|-1)^k$ choices of its nonzero values.  Inserting
$\binom N{k_0}\leq(\e N/k_0)^{k_0}$ into
\eqref{d2-supp-simple}, and using $l\geq k_0$, gives
\eqref{d2-support-sum} after changing $C_{W,s}$.
For $k_0\geq2s+1$, this is bounded by
$C_{W,s}(C_{W,s}\sqrt{k_0/N})^{k_0}$.  Choosing a sufficiently small
fixed $0<\delta<1/4$ makes its sum over
$2s+1\leq k_0\leq\delta N$ tend to zero.  The remaining finitely many
values of $k_0$ contribute $O(N^{-1/2})$; when $k_0=0$, the zero
vector is impossible if $e\neq0$, and every remaining term is
$O(N^{-1})$.  We have proved
\[
 \Pp\bigl(\exists x:\ |\operatorname{supp}x|\leq\delta N\,,
       (\widehat A-\omega I)x=e\bigr)=o(1)
 \eqlabel{d2-sparse-quotient}
\]
whenever $e\neq0$ has bounded support.
Since $V$ has only finitely many quotients, the same $\delta$ may and
will be chosen for all of them.

We next prove the local limit near the uniform law on $V$.  Fix
$
 \frac12<\theta<\frac23.
$
Uniformly for
\[
 \max_a|n_a-N/Q|\leq N^\theta\,,
 \qquad
 \max_a|\#\{i:y_i=a\}-N/Q|\leq2N^\theta\,,
 \qquad \sum_i y_i=2\sum_i z_i\,,
 \eqlabel{d2-local-window}
\]
we claim that
\[
 \frac{\Pp(\mathcal C=2n\mid S=y)}
      {\Pp(\mathcal C=2n)}=Q+o(1)\,.
 \eqlabel{d2-local-ratio}
\]
We prove the claim by a uniform saddle-point calculation.  Fix a base
point $a_0\in V$, put $t_{a_0}=0$, and work in the remaining
$Q-1$ real coordinates.  The one-pair moment generating functions are
\[
 \Phi_{\mathrm{unc}}(t)=\left(Q^{-1}\sum_{a\in V}e^{t_a}\right)^2\,,
 \qquad
 \Phi_c(t)=Q^{-1}\sum_{a\in V}e^{t_a+t_{c-a}}\,.
 \eqlabel{d2-moment-functions}
\]
Thus the unconditional and conditional log moment generating functions
of the count vector are
\[
 K_{\mathrm{unc}}(t)=N\log\Phi_{\mathrm{unc}}(t)\,,
 \qquad K_y(t)=\sum_{i=1}^N\log\Phi_{y_i}(t)\,.
\]
Their gradients at zero are both $2NQ^{-1}\one$.  Moreover,
\[
 \frac1Q\sum_{c\in V}\nabla^2\log\Phi_c(0)
 =\nabla^2\log\Phi_{\mathrm{unc}}(0)=:\Sigma_*\,.
\]
Indeed, if $c$ and $U$ are independent uniform variables on $V$, then
$(U,c-U)$ is also a pair of independent uniform variables.  The matrix $\Sigma_*$ is positive
definite in the chosen $Q-1$ coordinates.

Let $x$ be the vector formed by the corresponding coordinates of
$2n$.  Uniformly in \eqref{d2-local-window}, strict convexity and the
inverse function theorem give unique real saddles $t_{\mathrm{unc}},t_y$ with
\[
 \nabla K_{\mathrm{unc}}(t_{\mathrm{unc}})=x\,,
 \qquad \nabla K_y(t_y)=x\,,
 \qquad |t_{\mathrm{unc}}|+|t_y|=O(N^{\theta-1})\,.
 \eqlabel{d2-real-saddles}
\]
The values and first two derivatives at zero of
$Q^{-1}\sum_c\log\Phi_c$ and $\log\Phi_{\mathrm{unc}}$ agree.  Since every third
derivative of the fixed functions in \eqref{d2-moment-functions} is
bounded near zero, the second bound in \eqref{d2-local-window} gives,
uniformly for
$|t|\leq C N^{\theta-1}$,
\[
 |K_y(t)-K_{\mathrm{unc}}(t)|
 \leq C(N|t|^3+N^\theta|t|^2)
 =O(N^{3\theta-2})=o(1)\,.
 \eqlabel{d2-rate-comparison}
\]
Consequently the two Legendre transforms
\[
 I_j(x)=\langle t_j,x\rangle-K_j(t_j)\,,
 \qquad j\in\{\mathrm{unc},y\}\,,
\]
satisfy
\[
 |I_y(x)-I_{\mathrm{unc}}(x)|
 \leq\max_{t\in\{t_{\mathrm{unc}},t_y\}}
 |K_y(t)-K_{\mathrm{unc}}(t)|=o(1)\,.
\]
Moreover,
\[
 N^{-1}\|\nabla^2K_y(t_y)-\nabla^2K_{\mathrm{unc}}(t_{\mathrm{unc}})\|
 \leq C(N^{\theta-1}+|t_y|+|t_{\mathrm{unc}}|)=o(1)\,,
 \eqlabel{d2-Hess}
\]
so the determinants in the two Gaussian approximations have ratio $1+o(1)$.

It remains to compute the lattice prefactor uniformly.  Exponentially
tilt the two arrays by $t_{\mathrm{unc}},t_y$, respectively.  Their means are now
$x$, their summands remain uniformly bounded, and their normalized
covariances converge to the positive definite matrix $\Sigma_*$.
The unconditional difference lattice is
\[
 L_{\mathrm{unc}}=\{h\in\Z^V:\sum_ah_a=0\}\,,
\]
and for the conditional count vector we define
\[
 L=\Big\{h\in L_{\mathrm{unc}}:\sum_{a\in V}a h_a=0\text{ in }V\Big\}\,.
\]
Let $L_y$ be the lattice generated by
$(e_a+e_{c-a})-(e_b+e_{c-b})$, where
$c\in\{y_1,\ldots,y_N\}$ and $a,b\in V$; in particular, each
difference has $c$ fixed.  The second condition in
\eqref{d2-local-window} ensures that
every $c\in V$ occurs for large $N$, and hence $L_y\subset L$.
To prove equality, note that exponential tilting does not change the
unit-modulus Fourier saddles.  Represent a character of $L_{\mathrm{unc}}/L_y$ by
phases $(\xi_a)_{a\in V}$, determined up to a common phase.  It satisfies
\[
 \xi_a\xi_{c-a}=\xi_0\xi_c\,,
 \qquad a,c\in V\,.
\]
Taking $c=a+b$ shows that $a\mapsto\xi_a/\xi_0$ is a character of
$V$.  Hence, after the common
phase has been removed, the conditional integral has exactly $Q$
saddles and the unconditional integral has one.  Conversely, every
character produces such a saddle.  Since $[L_{\mathrm{unc}}:L]=Q$, it follows that
$L_y=L$ and that the conditional lattice prefactor is $Q$ times the
unconditional one.

We now estimate the Fourier inversion integral.  Choose
$0<\varepsilon<1/6$.  In a ball of radius
$N^{-1/2+\varepsilon}$ around any saddle, Taylor expansion of the
logarithm of the centered tilted characteristic function gives
\[
 -\frac N2\langle s,\Sigma_j s\rangle+O(N|s|^3)\,,
 \qquad
 \Sigma_j=N^{-1}\nabla^2K_j(t_j)\,,
\]
and $N|s|^3=o(1)$ throughout the ball.  After the change of variables
$v=N^{1/2}s$, its integral is therefore
\[
 (2\pi N)^{-(Q-1)/2}(\det\Sigma_j)^{-1/2}(1+o(1))\,.
 \eqlabel{d2-local-int}
\]
On the annulus from $N^{-1/2+\varepsilon}$ to a fixed small radius,
uniform positive definiteness gives the bound
$\exp(-cN|s|^2)$, whose integral is
$O(e^{-cN^{2\varepsilon}})$.  Outside the fixed saddle
neighbourhoods, every $c\in V$ occurs at least $N/(2Q)$ times for
large $N$; compactness and \eqref{d2-moment-functions} then give
$O(e^{-cN})$.  These bounds are unchanged by the real tilts in
\eqref{d2-real-saddles}.

At the saddle indexed by a character $\chi$, the Fourier phase is
\[
 \chi\left(\sum_i y_i-2\sum_i z_i\right)=1
\]
by the last condition in \eqref{d2-local-window}.  Thus the $Q$ conditional saddle contributions add
with the same phase.  Equations \eqref{d2-rate-comparison},
\eqref{d2-Hess}, and
\eqref{d2-local-int} prove
\eqref{d2-local-ratio}.
Combining it with \eqref{d2-pairing-formula} gives
\[
 \Pp(\widehat Az=y)=Q^{-(N-1)}(1+o(1))
 \eqlabel{d2-central-mixing}
\]
uniformly in \eqref{d2-local-window}.

We now prove the compression estimate.  Since $V$ and the set of values of $E_i$ are
fixed, the number of empirical tables is polynomial in $N$, and the
uniform Stirling error for each table is $O(\log N)$.
By \eqref{d2-deficit} and Stirling's formula, the total
contribution of tables near the uniform law but outside
\eqref{d2-local-window} is at most
\[
 N^{O_{V,\ell}(1)}\exp(-cN^{2\theta-1})=o(1)\,.
\]
The fixed entropy gap gives the same conclusion away from all
uniform laws on subspaces.

If a solution $Z$ lies within a sufficiently small neighbourhood of
the uniform law on a proper subspace $H<V$, then at most $\delta N$
rows of its quotient $x=Z\bmod H$ are nonzero.  The quotient
$e=E\bmod H$ is nonzero because the rows of $E$ affinely span $V$.
Thus \eqref{d2-sparse-quotient}, followed
by a union bound over the finitely many $H$, shows that the equation
$(\widehat A-\omega I)Z=E$ has such a solution with probability
$o(1)$.

On the invertible event the solution is unique.  Summing the row equations gives
$(2-\omega)\one^{\mathsf T}Z=\one^{\mathsf T}E=0$.  Since the rows
of $E$ affinely span $V$,
\[
 \rank[\one,E]=r+1\,.
\]
If in addition $E^{\mathsf T}Z=T$, then $Z$ belongs to the affine fiber
\[
 \{Z:\one^{\mathsf T}Z=0\,,\ E^{\mathsf T}Z=T\}\,,
\]
which has $Q^{N-r-1}=p^{r(N-r-1)}$ points.  Moreover, all but an
$o(1)$ proportion of this fiber satisfy the first condition in
\eqref{d2-local-window}: choose an invertible $(r+1)$-row minor of
$[\one,E]$; the other $N-r-1$ row values are free uniform elements of
$V$, and the chosen rows are then determined.  Since
$y=\omega Z+E$ differs in only $O(1)$ coordinates from $\omega Z$,
\eqref{d2-central-mixing} applies.  The expected number of solutions
in this fiber satisfying \eqref{d2-local-window} is therefore
\[
 Q^{N-r-1}Q^{-(N-1)}+o(1)=Q^{-r}+o(1)=p^{-r^2}+o(1)\,.
\]
Consequently,
\[
 \begin{split}
 \Pp(\text{event in \eqref{finite-field-point}})
 &\leq\E\#\{\text{fiber solutions satisfying
                         \eqref{d2-local-window}}\}\\
 &\quad+\Pp(\text{there is a solution outside
                         \eqref{d2-local-window}})
 \leq p^{-r^2}+o(1)\,,
 \end{split}
\]
which proves \eqref{finite-field-point}.  Summing over the $T$ for
which $T_{12}+T_{21}^{\mathsf T}$ is singular, as above, proves
\eqref{ff-proj}.

It remains to prove \eqref{ff-sing}.  First take
$V=\F_p$ and $E=0$.  Summing the row equations gives
$\sum_i z_i=0$.  The only proper subspace is $\{0\}$, and the sum of
the probabilities over nonzero vectors with at most $\delta N$
nonzero coordinates is $o(1)$ by
\eqref{d2-support-sum}.  The quadratic deficit gives $o(1)$ for
vectors near the uniform law but outside \eqref{d2-local-window},
while the fixed entropy gap gives $o(1)$ away from the neighbourhoods
of $\delta_0$ and the uniform law.  Together with the preceding
sparse-vector estimate, this bounds the contribution of all remaining vectors outside
\eqref{d2-local-window}.  There are
$p^{N-1}(1+o(1))$ vectors satisfying \eqref{d2-local-window} and
having coordinate sum zero, and each has
probability $p^{-(N-1)}(1+o(1))$ by
\eqref{d2-central-mixing}.  This proves
\eqref{finite-field-kernel}.  Every nontrivial kernel contains at
least $p-1$ nonzero vectors, which gives
\eqref{ff-sing}.
\end{proof}

\subsection{Conditioning the configuration model}

\begin{lemma}\label{lem:model-transfer}
Fix $d\geq2$.  If a sequence of matrix events has probability $o(1)$
under either $\mathcal A_{N,d}$ or $\widehat A$, then it has
probability $o(1)$ under the uniform law on $\cM_{N,d}$.  Moreover,
\[
 \Pp_{\cM_{N,d}}(\diag A=0)\longrightarrow e^{-d}\,.
 \eqlabel{diagonal-positive}
\]
\end{lemma}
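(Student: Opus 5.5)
The plan is to use the standard fact that the uniform law on $\cM_{N,d}$ is the conditional law of either random model given that the outcome is a $0/1$ matrix. The key inputs are that this event has probability bounded below uniformly in $N$, and the Poisson limit theory for short cycles in the configuration model.

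\emph{Step 1 (configuration model).} Each $B\in\cM_{N,d}$ arises from exactly $(d!)^{2N}$ matchings of the labelled half-edges. Indeed, for a simple biadjacency matrix, each row vertex's $d$ stubs must be matched bijectively to the $d$ distinct neighbours, and the stub choices at the column side are likewise free; so the count equals $(d!)^{N}\cdot(d!)^{N}$ independently of $B$. Hence $\widehat A$ conditioned on $\{\widehat A\in\{0,1\}^{N\times N}\}$ is uniform on $\cM_{N,d}$. It remains to show $\liminf_N\Pp(\widehat A\text{ simple})>0$. Multiple edges correspond to $2$-cycles (double edges) in the bipartite configuration model. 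By the method of moments, using factorial moments $(X)_k$, the number of double edges is asymptotically Poisson with mean $(d-1)^2/2$. So $\Pp(\text{simple})\to e^{-(d-1)^2/2}>0$. Then any event $F_N$ satisfies $\Pp_{\cM}(F_N)\le \Pp(\widehat A\in F_N)/\Pp(\widehat A\text{ simple})=o(1)$.

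\emph{Step 2 (permutation model, $d\ge3$).} For $P_1+\cdots+P_d$, a matrix $B\in\cM_{N,d}$ is obtained from exactly as many $d$-tuples as there are ordered decompositions of $B$ into $d$ permutation matrices. This number is \emph{not} constant in $B$, so exact conditioning fails. I would instead use contiguity: the uniform law on $\cM_{N,d}$ is contiguous to the permutation model restricted to simple outcomes. This is the classical result for regular (bipartite) graphs, via the small subgraph conditioning method of Robinson--Wormald and Janson, and its directed/bipartite version (cf.\ the contiguity used in \cite{CosteLambertZhu}). Contiguity means events of vanishing probability under one law vanish under the other, which is exactly the claim. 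Concretely, I would compute the joint Poisson limits of short even cycle counts in both models and verify the variance condition $\E Y^2/(\E Y)^2\to\exp(\sum_k\lambda_k\delta_k^2)$ for the number $Y$ of decompositions. Citing contiguity is the cleanest route; I expect this to be the main obstacle if one must prove it, since it requires the second-moment computation.

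\emph{Step 3 (diagonal).} Under $\widehat A$ conditioned to be simple, I would compute the joint factorial moments of the loop count $X_1=\tr\widehat A$ and the double-edge count $X_2$. These converge to those of independent Poisson variables with means $d$ and $(d-1)^2/2$: choosing $k$ distinct rows $i$ and a stub pair at $(i,i)$ gives $(N)_k d^{2k}/(dN)^{k}\to d^k$, with asymptotic independence from the double edges since the relevant configurations are disjoint with high probability. Therefore
\[
\Pp_{\cM_{N,d}}(\diag A=0)=\frac{\Pp(X_1=0,X_2=0)}{\Pp(X_2=0)}\to\frac{e^{-d}e^{-(d-1)^2/2}}{e^{-(d-1)^2/2}}=e^{-d}\,,
\]
which gives \eqref{diagonal-positive}.
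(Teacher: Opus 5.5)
Your proposal is correct and follows essentially the paper's route: exact conditioning of $\widehat A$ via the constant configuration count $(d!)^{2N}$, a joint Poisson limit for the loop and double-edge counts giving the ratio $e^{-d}$, and, for $d\geq3$, conditioning $P_1+\cdots+P_d$ on pairwise disjoint supports (the ordered one-factorization law) followed by the bipartite one-factorization contiguity theorem \cite{MRRW}. You leave implicit that the disjointness event has probability bounded below, which the paper gets from the van der Waerden bound $(1-(k-1)/N)^N$ at each conditional step; also, in Step~3 the factorial moments must be computed in the unconditioned configuration model (as your final ratio in fact does), not under the conditioned law, where the double-edge count is identically zero.
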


\begin{proof}
We first prove \eqref{diagonal-positive}.  Use the bipartite
configuration model with
$d$ half-edges at each of the $N$ vertices on each side.  Let
$Z_N^{\rm diag}$ count matched pairs whose two vertices have the same
label, and let $Z_N^{\rm par}$ count unordered pairs of configuration
edges having the same two endpoints.  For fixed nonnegative integers
$k,m$,
direct enumeration of ordered configurations gives
\[
 \E (Z_N^{\rm diag})_k(Z_N^{\rm par})_m
 \longrightarrow d^k\left(\frac{(d-1)^2}{2}\right)^m\,.
\]
Indeed, choices using pairwise disjoint half-edges give the displayed
main term.  Every choice with an overlap identifies at least one extra
vertex or half-edge and is smaller by a factor $O_d(N^{-1})$.  Hence
\[
 (Z_N^{\rm diag},Z_N^{\rm par})\ \Longrightarrow\
 \bigl(\operatorname{Poisson}(d),
       \operatorname{Poisson}((d-1)^2/2)\bigr)\,,
\]
with independent coordinates.  The event $Z_N^{\rm par}=0$ means that
there are no multiple edges, and $Z_N^{\rm diag}=0$ means that there are no diagonal edges.
Since each simple graph has the same number $(d!)^{2N}$ of
configurations,
\[
 \Pp_{\cM_{N,d}}(\diag A=0)
 =\frac{\Pp(Z_N^{\rm diag}=0,Z_N^{\rm par}=0)}
        {\Pp(Z_N^{\rm par}=0)}\longrightarrow e^{-d}\,.
\]

The same calculation gives
$\Pp(Z_N^{\rm par}=0)\to e^{-(d-1)^2/2}>0$.  Conditioning
$\widehat A$ on this event gives the uniform law on
$\cM_{N,d}$.  This proves the transfer from the configuration model
and, when $d=2$, from $\mathcal A_{N,2}$.

Suppose $d\geq3$.  The probability that $P_1,\ldots,P_d$ have
pairwise disjoint matrix supports is bounded away from zero.  For $2\leq k\leq d$,
conditionally on the first $k-1$ permutations with disjoint matrix
supports, the allowed-position matrix for
$P_k$ is $(N-k+1)$-regular, and the van der Waerden bound gives
conditional probability at least $(1-(k-1)/N)^N$.  Conditioning on
disjointness therefore preserves an $o(1)$ probability and gives the
ordered one-factorization law.  The bipartite one-factorization
contiguity theorem of Molloy--Robalewska--Robinson--Wormald
\cite[Corollary~1, Theorem~4, and the final paragraph of
Section~4]{MRRW} transfers this probability bound to the uniform law on $\cM_{N,d}$.
\end{proof}

\subsection{Augmented expansion}

For $S\subset[N]$ and a nonnegative $d$-regular matrix $A$, put
\[
 N_S(i)=\one_S(i)+\sum_{j\in S}A_{ij}\,,\qquad
 V_A^+(S)=\{i:N_S(i)\geq1\}\,,\qquad
 U_A^+(S)=\{i:N_S(i)=1\}\,.
\]
Thus $V_A^+(S)=S\cup\Gamma_A(S)$, where
\[
 \Gamma_A(S)=\{i:A_{ij}>0\text{ for some }j\in S\}\,.
\]

\begin{lemma}\label{lem:expansion}
Fix $d\geq2$ and $0<\eps<d-1$.  There is
$\delta=\delta(d,\eps)>0$ such that, for $A=\widehat A$, with
probability $1-o(1)$, simultaneously for $A$ and
$A^{\mathsf T}$,
\[
 |V_A^+(S)|>(d-\eps)|S|\,,\qquad
 |U_A^+(S)|>(d-1-2\eps)|S|
 \eqlabel{aug-expansion}
\]
for every nonempty $S\subset[N]$ with $|S|\leq\delta N$.  The same
conclusion holds for uniform $A\in\cM_{N,d}$.
\end{lemma}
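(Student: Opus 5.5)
The plan is a first-moment computation in the bipartite configuration model, after reducing both inequalities to a single edge-counting statement. Fix nonempty $S$ with $|S|=s\leq\delta N$ and set $V=V_A^+(S)$. Count the $ds$ column stubs of $S$ together with the "diagonal" incidences $\one_S(i)$; altogether these are $(d+1)s$ units distributed over the rows of $V$. Rows of $V$ with $N_S(i)=1$ form $U_A^+(S)$, and every other row of $V$ absorbs at least two units. Hence
\[
(d+1)s=\sum_{i}N_S(i)\geq |U_A^+(S)|+2\bigl(|V|-|U_A^+(S)|\bigr),
\]
so $|U_A^+(S)|\geq 2|V|-(d+1)s$. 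It therefore suffices to show $|V|>(d-\eps)s$ for all small $S$: this gives the first bound directly, and the second as $|U_A^+(S)|>(2d-2\eps-d-1)s=(d-1-2\eps)s$. Since $\widehat A^{\mathsf T}$ has the same law as $\widehat A$, it is enough to treat $A=\widehat A$ and then take a union bound over the two cases.

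\emph{First-moment bound.} Suppose $|V|\leq(d-\eps)s$. Then there are a set $S$ and a row set $R\supseteq S$ with $|R|=\lfloor(d-\eps)s\rfloor$ such that all $ds$ column stubs of $S$ are matched into the $d|R|$ row stubs of $R$. In the configuration model this has probability at most
\[
\prod_{k=0}^{ds-1}\frac{d|R|-k}{dN-k}\leq\Bigl(\frac{|R|}{N}\Bigr)^{ds}.
\]
Choosing $R\setminus S$ costs at most $\binom{N}{(d-1-\eps)s}$, so the expected number of bad pairs is at most
\[
\sum_{s\leq\delta N}\binom Ns\binom N{(d-1-\eps)s}\Bigl(\frac{ds}{N}\Bigr)^{ds}
\leq\sum_{s\leq\delta N}\Bigl(C_d\,(s/N)^{ds-s-(d-1-\eps)s}\Bigr)^{\!\!\!\phantom{x}}
=\sum_{s\leq\delta N}\bigl(C_d(s/N)^{\eps}\bigr)^{s}.
\]
The exponent is $\eps s>0$, which is where the hypothesis $\eps<d-1$ enters: it keeps $|R\setminus S|\geq0$ meaningful. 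For $s\leq\log^2N$ each term is $O(N^{-\eps})^s$, so that range contributes $O(N^{-\eps})$. For larger $s$ the terms are at most $(C_d\delta^{\eps})^{s}$, which is summably small once $\delta$ is chosen with $C_d\delta^\eps<1/2$. The total is $o(1)$.

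\emph{Transfer to $\cM_{N,d}$.} By Lemma~\ref{lem:model-transfer}, any event of probability $o(1)$ under $\widehat A$ has probability $o(1)$ under the uniform law on $\cM_{N,d}$, so the conclusion transfers.

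The main obstacle is the bookkeeping in the union bound: the powers of $s/N$ must exactly produce the positive exponent $\eps s$. In particular, the factor from choosing $S$ must be absorbed against the $ds$ stub matchings, and this must hold uniformly down to $s=1$. If the crude estimate $\binom Nk\leq(eN/k)^k$ loses constants that depend on $\eps$, I would absorb them into $C_d$, since $d$ and $\eps$ are fixed.
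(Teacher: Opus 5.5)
Your proposal is correct and follows the paper's proof essentially step for step. It uses the same union bound over $S\subset R$ with $|R|=\lfloor(d-\eps)s\rfloor$ and matching probability at most $(|R|/N)^{ds}$, giving terms $(K(s/N)^{\eps})^s$. It also uses the same incidence count $(d+1)|S|\geq2|V_A^+(S)|-|U_A^+(S)|$ for the second inequality, a union bound over $A$ and $A^{\mathsf T}$, and the transfer via Lemma~\ref{lem:model-transfer}.

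Two small points need fixing. The range $s\leq\log^2N$ contributes $O((\log N)^{2\eps}N^{-\eps})$ rather than $O(N^{-\eps})$; this is still $o(1)$, and the paper splits at $N^{1/2}$ instead. You should also require $(d-\eps)\delta<1$ explicitly, as the paper does, so that a set $R$ of the required size exists.
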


\begin{proof}
Fix $|S|=s$ and put $m=\lfloor(d-\eps)s\rfloor$.  If
$|V_A^+(S)|\leq m$, there is $T\supset S$, $|T|=m$, containing every
edge entering $S$.  For fixed $S,T$, the probability of this event is
at most
\[
 \frac{(dm)_{ds}}{(dN)_{ds}}\leq\left(\frac mN\right)^{ds}\,.
\]
Hence
\[
 \Pp\bigl(\exists S:|S|=s\,,\ |V_A^+(S)|\leq(d-\eps)s\bigr)
 \leq
 \binom Ns\binom{N-s}{m-s}\left(\frac mN\right)^{ds}
 \leq\biggl(K_{d,\eps}\left(\frac sN\right)^\eps\biggr)^{s}\,.
\]
Choose $\delta$ so that $K_{d,\eps}\delta^\eps<1/4$ and
$(d-\eps)\delta<1$.  Summing first over $s\leq N^{1/2}$ and then over
$N^{1/2}<s\leq\delta N$ gives the first inequality in
\eqref{aug-expansion}.  The same estimate applies to $A^{\mathsf T}$.

Summing $N_S(i)$ over $i$ counts $|S|$ diagonal incidences and $d|S|$
incoming incidences.  If
$u=|U_A^+(S)|$, then
\[
 (d+1)|S|=\sum_iN_S(i)
 \geq u+2(|V_A^+(S)|-u)=2|V_A^+(S)|-u\,,
\]
which gives the second inequality.  Lemma~\ref{lem:model-transfer}
transfers the failure probability to the uniform law on $\cM_{N,d}$.
\end{proof}

\section{Shifted inverse compression and reverse resonance}
\label{sec:compression}

\subsection{Shifted inverse compression}

\begin{proposition}
\label{prop:compression}
Fix $d\geq2$, $\ell\geq1$, and $w\in\Q(i)\setminus\{0,d\}$.  If $A$
is uniform on $\cM_{N,d}$, then, uniformly over the disjoint anchored
sets in \eqref{difference-matrices},
\[
 \Pp\left(
 \begin{array}{c}
 \det(A-wI)\neq0\,,\\[1mm]
 \det\bigl[L_X^{\mathsf T}((A-wI)^{-1}
              +(A-wI)^{-\mathsf T})L_Y\bigr]=0
 \end{array}
 \right)=o(1)\,.
 \eqlabel{compression-zero}
\]
Moreover,
\[
 \Pp(\det(A-wI)=0)=o(1)\,.
 \eqlabel{exact-singularity}
\]
\end{proposition}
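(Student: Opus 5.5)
We reduce Proposition~\ref{prop:compression} to Proposition~\ref{prop:ff-comp} by working modulo many primes, and then pass to the uniform law on $\cM_{N,d}$ via Lemma~\ref{lem:model-transfer}.

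Write $w=u/q$ with $q\in\mathbb N_+$ and $u\in\Z[i]$. Then $qA-u I$ and the matrix $M\deq L_X^{\mathsf T}\operatorname{adj}(qA-uI)^{\mathsf T\ldots}$ have Gaussian-integer entries. More precisely, put $D=\det(qA-uI)\in\Z[i]$. On the event $D\neq0$, the condition in \eqref{compression-zero} is equivalent to
\[
 \Delta\deq\det\bigl[L_X^{\mathsf T}(\operatorname{adj}(qA-uI)+\operatorname{adj}(qA-uI)^{\mathsf T})L_Y\bigr]=0,
\]
after multiplying by $(D/q)^{\ell}$, which is harmless. Both $D$ and $\Delta$ lie in $\Z[i]$.

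Now choose primes $p\equiv1\pmod 4$, so that $p$ splits in $\Z[i]$ and $\Z[i]/\mathfrak p\cong\F_p$ for a prime $\mathfrak p\mid p$. Let $\omega$ be the image of $w$ in $\F_p$, which makes sense when $p\nmid q$. We need $\omega\notin\{0,d\}$. Since $w\neq0,d$, only finitely many primes violate this, and we discard them. If $D=0$ or $\Delta=0$ in $\Z[i]$, then the corresponding reductions vanish modulo $\mathfrak p$ for every $\mathfrak p$. Reducing modulo $\mathfrak p$ turns $A-wI$ into $A-\omega I$ over $\F_p$ (up to the unit $q$), the adjugate into a scalar multiple of the inverse when the reduction is invertible, and $\Delta$ into a nonzero multiple of the finite-field determinant in \eqref{ff-proj}.

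For \eqref{exact-singularity}: the event $D=0$ implies that $A-\omega I$ is singular over $\F_p$. We would like to apply \eqref{ff-sing} and get probability at most $1/(p-1)+o(1)$, but \eqref{ff-sing} is stated for $\mathcal A_{N,d}$, not for uniform $A$. Lemma~\ref{lem:model-transfer} only transfers $o(1)$ bounds, so we first work entirely under $\mathcal A_{N,d}$. There the event $\{D=0\}$ is contained in the singularity event over $\F_p$ for each admissible $p$, so its probability is at most $1/(p-1)+o_p(1)$ for every such $p$. Letting $N\to\infty$ and then $p\to\infty$ shows that $\limsup_N\Pp_{\mathcal A_{N,d}}(D=0)=0$. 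This is an $o(1)$ statement, which then transfers to $\cM_{N,d}$.

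For \eqref{compression-zero} the argument is similar: under $\mathcal A_{N,d}$, the event $\{D\neq0,\Delta=0\}$ should be bounded by
\[
\{\text{singular mod }\mathfrak p\}\cup\{\text{invertible mod }\mathfrak p,\ \text{compressed det}=0\},
\]
with probability at most $2/(p-1)+o_p(1)$ by \eqref{ff-sing} and \eqref{ff-proj}. Here one must check two things: that the projected matrix $L_X^{\mathsf T}((\cdot)^{-1}+(\cdot)^{-\mathsf T})L_Y$ reduces correctly, which holds because the adjugate equals $D$ times the inverse and $D$ is a unit mod $\mathfrak p$ on the invertible event; and that the bounds are uniform over the anchored sets, which is built into Proposition~\ref{prop:ff-comp}. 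Taking $p$ large then gives $o(1)$ uniformly, and Lemma~\ref{lem:model-transfer} transfers the conclusion.

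The main subtlety is uniformity over the anchored sets $X,Y$ in the transfer step. Lemma~\ref{lem:model-transfer} concerns a single sequence of events, but the transfer is done by conditioning on an event of probability bounded below. Hence $\Pp_{\cM}(E)\leq C\,\Pp_{\mathcal A}(E)$ for a constant $C$ independent of the event: this follows from the van der Waerden argument and the contiguity theorem, or in the $d=2$ case from the configuration-model conditioning. For $d\geq3$, contiguity does not give such a constant, so I would instead take the supremum over anchored sets, apply the lemma to a worst-case sequence of anchored sets, and argue by contradiction. A second minor point is the existence of infinitely many admissible primes $p\equiv1\pmod4$, which is Dirichlet's theorem, or the elementary fact that there are infinitely many such primes.
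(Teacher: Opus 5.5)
Your proposal is correct and follows the paper's route. You clear denominators with $B=q\mathcal A_{N,d}-(u+iv)I$, observe that your $\Delta=(\det B)^\ell\det[L_X^{\mathsf T}(B^{-1}+B^{-\mathsf T})L_Y]$ is a Gaussian integer, and reduce modulo split primes $p\equiv1\pmod 4$, excluding the finitely many with $w_p\in\{0,d\}$. Proposition~\ref{prop:ff-comp} in the auxiliary model then gives the bound $2/(p-1)$, you let $p\to\infty$, and Lemma~\ref{lem:model-transfer} transfers the result.

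The only real difference is how you get uniformity over anchored sets. The paper notes that simultaneous relabelling of rows and columns preserves the uniform law on $\cM_{N,d}$ and acts transitively on anchored pairs, so the probability does not depend on $(X,Y)$ at all. Your worst-case-sequence contradiction argument is also valid, and it rightly avoids claiming a constant-factor comparison from contiguity when $d\geq3$.
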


\begin{proof}
Write
\[
 w=\frac{u+iv}{q}\,,
 \qquad u,v\in\Z\,,\quad q\in\mathbb N_+\,.
 \eqlabel{rational-shift}
\]
Choose a prime $p\equiv1\pmod4$ which does not divide $q$, and choose
$\iota_p\in\F_p$ with $\iota_p^2=-1$.  Let
\[
 \phi_p:\Z[i,q^{-1}]\longrightarrow\F_p\,,
 \qquad \phi_p(i)=\iota_p\,,\qquad \phi_p(q^{-1})=q^{-1}\,,
\]
be the corresponding ring homomorphism, and set
$w_p=\phi_p(w)=(u+v\iota_p)q^{-1}$.
We exclude the finitely many primes for which
$w_p\in\{0,d\}$.  Such a prime divides one of the two nonzero integers
\[
 u^2+v^2\,,
 \qquad (u-dq)^2+v^2\,.
\]
Dirichlet's theorem gives arbitrarily large remaining primes
congruent to one modulo four.

Reduce $L_X,L_Y$ modulo $p$, retaining the same notation.
Proposition~\ref{prop:ff-comp} gives
\[
 \limsup_{N\to\infty}\Pp\left(
 \begin{array}{c}
 \mathcal A_{N,d}-w_pI\text{ is invertible}\,,\\
 \det[L_X^{\mathsf T}((\mathcal A_{N,d}-w_pI)^{-1}
 +(\mathcal A_{N,d}-w_pI)^{-\mathsf T})L_Y]=0
 \end{array}
 \right)\leq\frac1{p-1}\,.
 \eqlabel{modular-compression}
\]
It also gives
\[
 \limsup_{N\to\infty}
 \Pp(\mathcal A_{N,d}-w_pI\text{ is singular over }\F_p)
 \leq\frac1{p-1}\,.
 \eqlabel{modular-singularity}
\]

For the complex shift, put
\[
 B=q\mathcal A_{N,d}-(u+iv)I\in\Z[i]^{N\times N}\,.
\]
Since $(\mathcal A_{N,d}-wI)^{-1}=qB^{-1}$, the Gaussian integer
\[
 (\det B)^\ell
 \det[L_X^{\mathsf T}(B^{-1}+B^{-\mathsf T})L_Y]
\]
vanishes on the event in \eqref{compression-zero}.  Reducing this
identity modulo one of the remaining split primes shows that
either $\phi_p(B)=q(\mathcal A_{N,d}-w_pI)$ is singular, or the
projected determinant in \eqref{modular-compression} vanishes.  Equations
\eqref{modular-compression}--\eqref{modular-singularity} give
\[
 \limsup_{N\to\infty}\Pp(\text{event in \eqref{compression-zero}})
 \leq\frac2{p-1}\,.
\]
Letting $p$ tend to infinity through these split primes proves
\eqref{compression-zero} in the auxiliary model.  If
$\det(\mathcal A_{N,d}-wI)=0$ over $\C$, then its determinant after clearing denominators
vanishes modulo
every such prime under $\phi_p$.  Equation
\eqref{modular-singularity}, followed
again by $p\to\infty$, proves \eqref{exact-singularity} in this model.

Lemma~\ref{lem:model-transfer} transfers \eqref{compression-zero} and
\eqref{exact-singularity} to the uniform law on $\cM_{N,d}$.  Simultaneous
permutation of the row and column labels acts transitively on the anchored disjoint pairs $(X,Y)$,
proving the asserted uniformity.
\end{proof}

\subsection{Reverse resonance}

For a shifted matrix $C=A-wI$, put $D_N=\binom N2$ and
\[
 \mathcal R_\kappa(C)=
 \begin{cases}
 \#\{i<j:|(e_i-e_j)^{\mathsf T}C^{-1}(e_i-e_j)+1/w|
              \leq e^{-\kappa N}\}\,,&\det C\ne0\,,\\
 D_N\,,&\det C=0\,.
 \end{cases}
\]

\begin{proposition}\label{prop:resonance}
Fix $d\geq2$, $w\in\Q(i)\setminus\{0,d\}$, and $\kappa>0$.  If $A$
is uniform on $\cM_{N,d}$, then
\[
 \zeta_N(\kappa):=\E\frac{\mathcal R_\kappa(A-wI)}{D_N}
 \longrightarrow0\,.
\]
\end{proposition}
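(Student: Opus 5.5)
The plan is to show that a positive density of resonant pairs forces one of the exact algebraic degeneracies of Proposition~\ref{prop:compression}, and then to conclude with supersaturation and Markov's inequality. Write $G=C^{-1}$ with $C=A-wI$, and let $S=G+G^{\mathsf T}$. Put $Q(i,j)=(e_i-e_j)^{\mathsf T}Ga_{\{i,j\}}=\tfrac12(S_{ii}+S_{jj}-2S_{ij})$. The starting point is the polarization identity
\[
 (e_x-e_{x_0})^{\mathsf T}S(e_y-e_{y_0})
 =-\bigl(Q(x,y)-Q(x,y_0)-Q(x_0,y)+Q(x_0,y_0)\bigr)\,,
\]
valid for disjoint $X\ni x_0$, $Y\ni y_0$. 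If every pair in $X\times Y$ is resonant, that is, $|Q+1/w|\leq e^{-\kappa N}$, the constants $-1/w$ cancel in the double difference. Hence every entry of $M=L_X^{\mathsf T}SL_Y$ has modulus at most $4e^{-\kappa N}$, and
\[
 |\det M|\leq \ell!\,4^\ell e^{-\kappa\ell N}\,.
\]

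The key step, and the one I expect to be the main obstacle, is an arithmetic lower bound for $|\det M|$ when $\det M\neq0$. This bound must have an exponent independent of $\ell$; the naive entrywise denominator $(\det B)^\ell$ would be useless, since $|\det B|$ may be as large as $e^{C_0N}$. With $B=qA-(u+iv)I$ as in \eqref{rational-shift}, we have $G=qB^{-1}$. I would use
\[
 S=G(C+C^{\mathsf T})G^{\mathsf T}
\]
and apply Cauchy--Binet:
\[
 \det M=\sum_{I,J}\det(L_X^{\mathsf T}G)_{\cdot I}\,
 \det(C+C^{\mathsf T})_{IJ}\,\det(G^{\mathsf T}L_Y)_{J\cdot}\,.
\]
By Jacobi's complementary-minor identity, every $\ell\times\ell$ minor of $L_X^{\mathsf T}B^{-1}$ is a Gaussian integer divided by $\det B$. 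Since $q(C+C^{\mathsf T})$ has Gaussian-integer entries, this exhibits
\[
 (\det B)^2q^{O(\ell)}\det M
\]
as a Gaussian integer. Hadamard's inequality gives $|\det B|\leq(qd+|u+iv|)^N$. Therefore, if $\det M\neq0$, then $|\det M|\geq e^{-C_1N}$ for large $N$, where $C_1=C_1(d,w)$ does not depend on $\ell$. I would fix $\ell>C_1/\kappa+1$. Comparing the two bounds shows that, for large $N$, complete resonance on $X\times Y$ forces $\det M=0$ exactly, which is precisely the event in \eqref{compression-zero}.

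It remains to count. On the event $\det C\neq0$ and $\mathcal R_\kappa(C)\geq\eps D_N$, the resonance graph on $[N]$ has edge density at least $\eps$. By Erd\H{o}s--Simonovits supersaturation \cite{ErdosSimonovits}, it then contains at least $c(\eps,\ell)N^{2\ell+2}$ anchored copies of $K_{\ell+1,\ell+1}$, meaning disjoint anchored pairs $(X,Y)$ with all cross pairs resonant. By the previous paragraph, each such copy lies in the event of \eqref{compression-zero}. Proposition~\ref{prop:compression} bounds the probability of that event by $o(1)$, uniformly over the $O(N^{2\ell+2})$ anchored pairs, so the expected number of such copies is $o(N^{2\ell+2})$. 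Markov's inequality then gives
\[
 \Pp\bigl(\det C\neq0,\ \mathcal R_\kappa\geq\eps D_N\bigr)=o(1)\,.
\]
The singular case, where $\mathcal R_\kappa=D_N$, has probability $o(1)$ by \eqref{exact-singularity}. Since $\mathcal R_\kappa/D_N\leq1$ always, we obtain $\zeta_N(\kappa)\leq\eps+o(1)$, and letting $\eps\to0$ finishes the proof.
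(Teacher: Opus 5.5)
Your proposal is correct and follows essentially the same route as the paper. Both arguments use four-point polarization to make the entries of $L_X^{\mathsf T}(G+G^{\mathsf T})L_Y$ exponentially small. Both then factor through $G(C+C^{\mathsf T})G^{\mathsf T}$ with Cauchy--Binet and Jacobi, so the denominator is only $(\det B)^2$, and combine this with Hadamard's inequality and a choice of $\ell$ with $\ell\kappa$ exceeding the arithmetic exponent. The conclusion comes from Erd\H{o}s--Simonovits supersaturation, Proposition~\ref{prop:compression}, and \eqref{exact-singularity}.

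The only difference is cosmetic. You apply Markov's inequality to the number of degenerate anchored pairs, whereas the paper samples one anchored pair uniformly and independently of $A$; this is the same computation.
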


\begin{proof}
Fix $0<\alpha<1$.  On the nonsingular event put $G=(A-wI)^{-1}$ and
$q_{ij}=(e_i-e_j)^{\mathsf T}G(e_i-e_j)$, and let
\[
 \mathcal E_{N,\alpha}
 =\{\mathcal R_\kappa(A-wI)\geq\alpha D_N\}\,.
\]
Use the representation \eqref{rational-shift} and put
\[
 B=q(A-wI)=qA-(u+iv)I\in\Z[i]^{N\times N}\,.
\]
There is a constant $\Lambda=\Lambda(d,w)$ such that every row of $B$
has squared Euclidean norm at most $\Lambda$.  Explicitly, one may take
\[
 \Lambda=\max\bigl(dq^2+|u+iv|^2,
 (d-1)q^2+|q-u-iv|^2\bigr)\,.
\]
Hadamard's inequality gives
\[
 |\det B|^2\leq\Lambda^N\,.
 \eqlabel{Hadamard-B}
\]

Choose a fixed $\ell$ so large that
\[
 \ell\kappa>\log\Lambda\,.
 \eqlabel{ell-choice}
\]
If $\mathcal E_{N,\alpha}$ occurs and $A-wI$ is invertible, then the
graph on $[N]$ whose resonant edges satisfy
$|q_{ij}+1/w|\leq e^{-\kappa N}$ has at least $\alpha D_N$ edges.  The
standard supersaturation theorem for $K_{\ell+1,\ell+1}$
\cite{ErdosSimonovits} gives a
constant $\rho=\rho(\alpha,\ell)>0$ such that at least a proportion
$\rho$ of the disjoint anchored pairs $(X,Y)$ in
\eqref{difference-matrices} have every edge between $X$ and $Y$
resonant.

For such a pair, four-point polarization gives, for
$x\in X\setminus\{x_0\}$ and $y\in Y\setminus\{y_0\}$,
\[
 \begin{split}
 \big|(e_x-e_{x_0})^{\mathsf T}(G+G^{\mathsf T})
                 (e_y-e_{y_0})\big|=
 \left|-q_{xy}+q_{xy_0}+q_{x_0y}-q_{x_0y_0}\right|
 \leq4e^{-\kappa N}\,.
 \end{split}
\]
Hadamard's inequality therefore yields
\[
 \left|\det L_X^{\mathsf T}(G+G^{\mathsf T})L_Y\right|
 \leq(4\sqrt\ell\,e^{-\kappa N})^\ell\,.
 \eqlabel{projected-upper}
\]

Since $G=qB^{-1}$,
\[
 G+G^{\mathsf T}
 =qB^{-\mathsf T}(B+B^{\mathsf T})B^{-1}\,.
\]
Two applications of Cauchy--Binet followed by Jacobi's complementary-
minor identity give
\[
 \det L_X^{\mathsf T}(G+G^{\mathsf T})L_Y
 =q^\ell\frac{Z_{X,Y}(B)}{(\det B)^2}\,,
 \qquad Z_{X,Y}(B)\in\Z[i]\,.
 \eqlabel{integer-separation}
\]
Indeed, Cauchy--Binet expands the left side through $\ell$-row minors
of $B^{-1}L_X$ and $B^{-1}L_Y$.  Jacobi's identity writes each such
minor of $B^{-1}$ with the single denominator $\det B$ and a
Gaussian-integer complementary minor.  The two inverse factors thus
produce exactly $(\det B)^2$ in \eqref{integer-separation}; all
remaining factors have entries in $\Z[i]$.
If $Z_{X,Y}(B)\neq0$, then $|Z_{X,Y}(B)|\geq1$, so
\eqref{Hadamard-B} and \eqref{integer-separation} imply
\[
 \left|\det L_X^{\mathsf T}(G+G^{\mathsf T})L_Y\right|
 \geq q^\ell\Lambda^{-N}\,.
 \eqlabel{projected-lower}
\]
By \eqref{ell-choice},
\[
 \frac{q^\ell\Lambda^{-N}}
 {(4\sqrt\ell\,e^{-\kappa N})^\ell}
 =\left(\frac q{4\sqrt\ell}\right)^\ell
 e^{(\ell\kappa-\log\Lambda)N}\longrightarrow\infty\,.
\]
Thus \eqref{projected-upper} and \eqref{projected-lower} are
incompatible for large $N$, and the
projected determinant vanishes for every such anchored pair.

Choose a disjoint anchored pair $(X,Y)$ uniformly and independently of $A$.
Proposition~\ref{prop:compression} and its uniformity give
\[
 \rho\Pp(\mathcal E_{N,\alpha},\det(A-wI)\ne0)
 \leq\Pp\bigl(\det(A-wI)\neq0\,,
 \det L_X^{\mathsf T}(G+G^{\mathsf T})L_Y=0\bigr)+o(1)=o(1)\,.
\]
Together with \eqref{exact-singularity}, this gives
$\Pp(\mathcal E_{N,\alpha})=o(1)$.  Since
$0\leq\mathcal R_\kappa(A-wI)/D_N\leq1$,
\[
 \limsup_{N\to\infty}\zeta_N(\kappa)\leq\alpha\,.
\]
Letting $\alpha\downarrow0$ proves the proposition.
\end{proof}

\section{The small-ball estimate}\label{sec:fixed-degree}

For a subspace $V\subset\C^N$, $P_V$ denotes orthogonal projection
onto $V$.  Since $A$ is real,
\[
 (A-wI)^*=A^{\mathsf T}-\overline w I\,.
\]

For an unordered pair $e=\{i,j\}$, $i<j$, set
\[
 a_e=e_i-e_j\,,
 \qquad P_e=I-a_ea_e^*\,.
\]
Since $\|a_e\|^2=2$, the matrix $P_e$ is the real orthogonal
transposition of coordinates $i,j$.  In particular,
$P_e=P_e^*=P_e^{-1}$ and $P_ea_e=-a_e$.  The map
\[
 T_eA=AP_e
\]
is an involutive bijection of $\cM_{N,d}$.  Choosing $e$ uniformly from
the $D_N=\binom N2$ unordered pairs defines a reversible Markov chain
with uniform stationary measure.

For $w\ne0$, put
\[
 C=A-wI\,,
 \qquad C_e=T_eA-wI=CP_e-wa_ea_e^*\,,
 \qquad C_eP_e=C+wa_ea_e^*\,.
 \eqlabel{two-gauges}
\]

\begin{lemma}\label{lem:cluster}
Fix $d\geq2$ and $w\in\C\setminus\{0\}$.  There is
$c_0=c_0(d,w)>0$ with the following property.  Let $A\in\cM_{N,d}$,
$e\in\binom{[N]}2$, $1\leq r<N$, $\eps\geq0$, and $\eta,t>0$.
Suppose
\[
 \tau_r(C)\leq\eps\,,
 \qquad \tau_{r+1}(C)\geq\eta\,,
\]
and let $U,V$ be the bottom $r$-dimensional left and right singular
subspaces of $C$.  If
\[
 \|P_Ua_e\|\geq t\,,
 \qquad \|P_Va_e\|\geq t\,,
 \qquad \eps\leq c_0\eta t^2\,,
\]
then
\[
 \tau_{r-1}(C_e)\leq\eps\,,
 \qquad \tau_r(C_e)\geq c_0\eta t^2\,.
 \eqlabel{cluster-conclusion}
\]
For $r=1$, the first conclusion is omitted.
\end{lemma}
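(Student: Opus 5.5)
Since $P_e$ is orthogonal, \eqref{two-gauges} shows that $C_e$ and $M:=C_eP_e=C+wa_ea_e^*$ have the same singular values. So it suffices to compare the singular values of $C$ with those of its rank-one perturbation $M$.

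\emph{Upper bound.} The first conclusion is standard rank-one interlacing for singular values: $\tau_{k}(C+R)\leq\tau_{k+1}(C)$ whenever $\rank R\leq1$. This follows from the min--max characterization by restricting to the kernel of $R$. Taking $k=r-1$ gives $\tau_{r-1}(C_e)\leq\tau_r(C)\leq\eps$.

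\emph{Lower bound.} Split $C=C_s+C_b$, where $C_s$ is the part of the singular value decomposition on the bottom pair $(V,U)$, so $\|C_s\|\leq\eps$. The remaining part $C_b$ vanishes on $V$, maps $V^\perp$ into $U^\perp$, and satisfies $\|C_by\|\geq\eta\|y\|$ for $y\in V^\perp$. By Weyl's inequality,
\[
\tau_r(M)\geq\tau_r(N_0)-\eps\,,\qquad N_0:=C_b+wa_ea_e^*\,.
\]
It therefore suffices to show $\tau_r(N_0)\geq2c_0\eta t^2$, because then $\eps\leq c_0\eta t^2$ yields \eqref{cluster-conclusion}.

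Let $K=V\cap a_e^\perp$, which has dimension $r-1$. Every $x\in K$ satisfies $N_0x=0$. By min--max, $\tau_r(N_0)\geq\min\{\|N_0x\|:\|x\|=1,\ x\perp K\}$. Such an $x$ can be written as $x=\alpha v+y$, where $v=P_Va_e/\|P_Va_e\|$, $y\in V^\perp$, and $|\alpha|^2+\|y\|^2=1$. Projecting $N_0x=C_by+w(a_e^*x)a_e$ onto $U$ and onto $U^\perp$ gives
\[
\|N_0x\|\gtrsim\max\Bigl(|w|\,t\,|a_e^*x|\,,\ \eta\|y\|-\sqrt2|w||a_e^*x|\Bigr)\,.
\]
The first term uses $\|P_Ua_e\|\geq t$; the second uses $\|a_e\|=\sqrt2$.

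We then split into two cases.
\begin{itemize}[leftmargin=2em]
\item If $\|y\|<t/4$, then $|\alpha|\geq1/2$. Since $a_e^*v=\|P_Va_e\|\geq t$, we get $|a_e^*x|\geq t/2-\sqrt2\|y\|\gtrsim t$. The $U$-component then gives $\|N_0x\|\gtrsim|w|t^2\gtrsim\eta t^2$, using $\eta\leq\|C\|\leq d+|w|$.
\item If $\|y\|\geq t/4$, either $|a_e^*x|\geq\eta t/(8\sqrt2|w|)$ or not. In the first case the $U$-component is $\gtrsim\eta t^2$. In the second, the $U^\perp$-component is at least $\eta t/4-\eta t/8\geq\eta t^2/8$, using $t\leq\sqrt2$.
\end{itemize}
In both cases $\|N_0x\|\geq c\,\eta t^2$ with $c=c(d,w)>0$, and we take $c_0=c/2$.

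The main point needing care is this case analysis: it must produce a constant depending only on $d$ and $w$, uniformly in $N$, $r$ and $e$. The boundedness $\|C\|\leq d+|w|$ is what lets the constant absorb the comparison between $|w|t^2$ and $\eta t^2$. For $r=1$ the subspace $K$ is trivial and the same argument applies, with the first conclusion omitted.
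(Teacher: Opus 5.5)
Your proof is correct and is essentially the paper's argument. The paper works in the other gauge $C_e=CP_e-wa_ea_e^*$, so its kernel is $P_eV\cap a_e^\perp$ rather than your $V\cap a_e^\perp$. Otherwise it proceeds as you do: it truncates the bottom $r$ singular values (your $C_b$), then bounds the truncated operator below on the complement of this $(r-1)$-dimensional kernel using the same projections onto $U$ and $U^\perp$. It chains those inequalities rather than splitting into cases, and it gets both conclusions from Weyl's inequality instead of using rank-one interlacing for the first.

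The only slip is in your last subcase. Since only $t\leq\sqrt2$ is available, you get $\eta t/8\geq\eta t^2/(8\sqrt2)$ rather than $\eta t^2/8$, which merely changes the constant.
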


\begin{proof}
Put $M=CP_e$ and $a=a_e$.  The bottom left singular subspace of $M$
is $U$, while its bottom right singular subspace is $P_eV$.  Since
$P_ea=-a$,
\[
 \|P_{P_eV}a\|=\|P_Va\|\geq t\,.
 \eqlabel{gauge-visible}
\]
Choose a singular-value decomposition of $M$ and replace its bottom
$r$ singular values by zero.  The resulting matrix $M_0$ satisfies
\[
 \|M-M_0\|\leq\eps\,,
 \quad \ker M_0=P_eV\,,
 \quad \ker M_0^*=U\,,
 \quad \|M_0x\|\geq\eta\|x\|\quad(x\perp P_eV)\,.
\]
Put $S=M_0-waa^*$.  Projection of $Sx=0$ onto $U$ gives
$-wP_Ua\,a^*x=0$.  Thus $a^*x=0$ and then $M_0x=0$, so
\[
 \ker S=P_eV\cap a^\perp\,,
 \qquad \dim\ker S=r-1\,.
\]

We estimate $S$ on $(\ker S)^\perp$.  Write
$x=x_V+x_\perp$ according to $P_eV\oplus(P_eV)^\perp$, put $y=Sx$,
and set $\alpha=a^*x$.  Projection onto $U$ and $U^\perp$ gives
\[
 |\alpha|\leq\frac{\|y\|}{|w|t}\,,
 \qquad
 \|x_\perp\|\leq\frac1\eta
 \left(\|y\|+|w|\|a\||\alpha|\right)\,.
\]
Since $x\perp P_eV\cap a^\perp$, the vector $x_V$ is a scalar
multiple of $P_{P_eV}a$.  Hence
\[
 \|x_V\|\leq\frac1t
 \left(|\alpha|+\|a\|\|x_\perp\|\right)\,.
\]
As $\|a\|=\sqrt2$, $\|C\|\leq d+|w|$, and
$\eta\leq\|C\|$, the last two displays imply
\[
 \|x\|\leq C(d,w)\eta^{-1}t^{-2}\|Sx\|
 \qquad(x\perp\ker S)\,.
\]
Thus the first positive singular value of $S$ is at least
$c_1(d,w)\eta t^2$.  Choose $c_0\leq c_1/2$.  Since
\[
 C_e=M-waa^*=S+(M-M_0)\,,
\]
Weyl's inequality proves \eqref{cluster-conclusion}.
\end{proof}

\subsection{Positive visibility}

Small-set expansion and a peeling argument give a polynomial lower
bound on both projection norms for a positive proportion of transpositions.

For every shifted matrix $C$ and every $1\leq r<N$, fix measurable maps
$C\mapsto U_r(C),V_r(C)$ selecting its bottom left and right singular
subspaces.

\begin{proposition}\label{prop:visibility}
Fix $d\geq2$ and $w\in\C\setminus\{0,d\}$.  For uniform
$A\in\cM_{N,d}$, there are constants $T,g>0$ and events $\cG_N$ with
$\Pp(\cG_N^c)=o(1)$ such that the following holds on $\cG_N$.
Put $C=A-wI$ and $t=N^{-T}$.  Simultaneously for every $1\leq r<N$,
if $\tau_r(C)\leq t$, then at least $gD_N$ pairs $e$ satisfy
\[
 \|P_{U_r(C)}a_e\|>t\,,
 \qquad
 \|P_{V_r(C)}a_e\|>t\,.
 \eqlabel{positive-visibility}
\]
We call a pair satisfying \eqref{positive-visibility} visible at rank
$r$.
\end{proposition}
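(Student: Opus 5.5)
The plan is to reduce visibility to a delocalization statement for unit vectors in the bottom singular subspaces, and then count pairs. We take $\cG_N$ to be the event of Lemma~\ref{lem:expansion}, for a suitable $\eps$, holding simultaneously for $A$ and $A^{\mathsf T}$.

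\emph{Step 1: no approximate kernel vector is concentrated.} Let $x$ be a unit vector with $\|Cx\|\leq t$; the same argument applies to $C^*$ with $A^{\mathsf T}$. We aim to show that every level set $\{i:|x_i-\lambda|\leq N^{-T'}\}$ has size at most $(1-\delta')N$, for some $T'<T$. The argument is peeling. Order the coordinates by distance from $\lambda$. If the complement $S$ of a near-level set is small, $|S|\leq\delta N$, then the augmented expansion gives $|U_A^+(S)|>(d-1-2\eps)|S|\geq|S|/2$ for small $\eps$ (we need $d\geq2$ here). A row $i\in U_A^+(S)$ meets $S$ in exactly one entry, counting the diagonal. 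The row equation $(Cx)_i=\sum_jA_{ij}x_j-wx_i$ then expresses a single coordinate $x_j$, $j\in S$, in terms of the others, up to an error $t$. Since $(d-w)\lambda$ differs from the $S$-free part by only $O(\text{level width})$, and $w\neq d$ with $w\neq0$, this forces $|x_j-\lambda|$ to be small, up to a factor depending on $d,w$.

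Iterating over $O(\log N)$ dyadic shells of distance shows that all coordinates lie within $N^{O(1)}t$ of $\lambda$. Then $x\approx\lambda\one$, and $\|C\one\|=|d-w|\sqrt N$ contradicts $\|Cx\|\leq t$ unless $|\lambda|$ is tiny. In that case $\|x\|\ll1$, a contradiction. This step, the quantitative peeling with polynomial loss per step and only $O(\log N)$ steps, is the main obstacle. I would first try thresholds growing geometrically by a factor $K(d,w)$ per shell and set $T$ large depending on the number of shells. Keeping the number of shells at $O(\log N)$ rather than linear requires that each round shrinks $|S|$ by a fixed factor, which the expansion provides. The case where $S$ is large ($>\delta N$) is handled by the non-concentration assumption itself.

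\emph{Step 2: from spread vectors to projections.} For a subspace $U$ of dimension $r$, $\|P_Ua_e\|\geq|\langle u,a_e\rangle|=|u_i-u_j|$ for any unit $u\in U$. Fix one unit $u\in U_r(C)$ and one unit $v\in V_r(C)$. We have $\|C^*u\|\leq\tau_r\leq t$ and $\|Cv\|\leq t$, so Step 1 applies to both. Declare $i\sim j$ if $|u_i-u_j|\leq 2t$. Step 1, with $T'$ chosen so that $2t\leq N^{-T'}$, shows that no interval of width $2t$ captures more than $(1-\delta')N$ coordinates. A median split therefore yields at least $c\delta'N^2$ pairs with $|u_i-u_j|>2t$, and likewise for $v$.

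To get both conditions simultaneously we need more than each separately: a fraction $g$ with $g>0$. I would strengthen Step 1 to show that every level set of width $N^{-T'}$ has size at most $\delta'N$ for small $\delta'$. Then the bad pairs for $u$ number at most about $\delta'N^2$, and likewise for $v$, so more than half of all pairs are good for both. This stronger spreading follows from the same peeling, with the expansion applied to the set $S$ of coordinates outside a small level set, since $|S|\geq(1-\delta')N$ is too big. To handle this I would instead apply expansion to the level set $L$ itself, assuming $\delta'N\leq|L|\leq\delta N$. Rows in $U_A^+(L)$ then tie one coordinate of $L$ to values outside $L$. Combining this with Step 1 requires care.

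\emph{Step 3: uniformity.} Everything holds on the single event $\cG_N$ simultaneously for all $r$ and all choices of $u,v$, because Step 1 is deterministic given expansion. Lemma~\ref{lem:expansion} gives $\Pp(\cG_N^c)=o(1)$ for the uniform law on $\cM_{N,d}$.
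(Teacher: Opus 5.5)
Your Step~1 is sound, and it is the paper's peeling in different packaging. The paper first proves $\|x\|\le CN^{\kappa}\|Cx\|$ for $x$ supported on sets of size at most $\delta_dN$. It then obtains the level deficit $\sup_z\#\{i:|v_i-z|\le 2t\}<(1-\delta_*)N$ by showing that the level value is tiny, so that $v$ restricted to the complement of the level set would violate that bound. Your threshold peeling around $\lambda$ does both at once.

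One ordering fix is needed. A peeled row gives $x_j-\lambda=(w-d)\lambda+O(t+\rho)$, with $\rho$ the level width, so you need $|\lambda|=O(t+\rho)$ \emph{before} peeling, not at the end. You can get it from a row lying entirely in the level set; such a row exists once $\delta<1/(d+1)$.

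The genuine gap is Step~2. You rightly note that two families of pairs, each of proportion about $\delta'$, need not intersect. But your remedy, that no disk of radius $N^{-T'}$ contains $\delta'N$ coordinates for small $\delta'$, does not follow from expansion. Rows of $U_A^+(L)$ only impose consistent relations such as $w\lambda\approx\sum_kA_{ik}x_k$ with every $x_k\notin L$. Indeed, small-set expansion survives disjoint unions. So a union of two expanding $d$-regular digraphs, one having eigenvalue $w$, lies in your expansion event yet has a kernel vector vanishing on half the coordinates. In the paper, such strong spreading is Theorem~\ref{thm:HGALR} together with Lemma~\ref{lem:spread}. It is proved with limiting row laws, negative-pressure partitions and a colouring bound, and only for $w\in\Q(i)$.

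The strengthening is also unnecessary. The missing idea is a three-index argument that exploits the margin between $2t$ and $t$:
\begin{itemize}
\item Fix unit vectors $u\in U_r(C)$ and $v\in V_r(C)$. By Step~1, for each $i$ at least $\delta'N$ indices $j$ have $|u_i-u_j|>2t$, and at least $\delta'N$ indices $k$ have $|v_i-v_k|>2t$. This gives at least $(\delta')^2N^3$ triples $(i,j,k)$.
\item In each triple, one of $\{i,j\}$, $\{i,k\}$, $\{j,k\}$ has both coordinate differences larger than $t$. If the first two fail, then $|v_i-v_j|\le t$ and $|u_i-u_k|\le t$. By the triangle inequality, $|u_j-u_k|>t$ and $|v_j-v_k|>t$.
\item Each pair lies in at most $6N$ triples. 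Hence at least $((\delta')^2/3)D_N$ pairs $e=\{i,j\}$ satisfy $\|P_{U_r(C)}a_e\|\ge|u_i-u_j|>t$ and $\|P_{V_r(C)}a_e\|>t$.
\end{itemize}
This is exactly how the paper concludes, with $g=\delta_*^2/3$.
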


\begin{proof}
Apply Lemma~\ref{lem:expansion} with $\eps=1/4$, and let $\delta_d$
and $\cG_N$ denote the resulting constant and simultaneous event for
$A,A^{\mathsf T}$.  Thus, on $\cG_N$,
\[
 |S\cup\Gamma_A(S)|>\left(d-\frac14\right)|S|
\]
for every nonempty $S\subset[N]$ with $|S|\leq\delta_dN$.

We next obtain a polynomial lower bound on vectors of small support.
Put $L=d+|w|$ and
\[
 \begin{gathered}
 a=\begin{cases}
     \min(1,|w|,|1-w|)\,,&w\ne1\,,\\
     1\,,&w=1\,,
    \end{cases}
 \; \gamma=d-\frac32\,,
 \; \theta=\frac{\gamma}{d+1}\,,
 \lambda=-\log(1-\theta)\,,
 \; \kappa=\frac{\log(1+L/a)}{\lambda}\,.
 \end{gathered}
\]
Every nonzero entry of $C$ has modulus at least $a$.
Fix a nonempty $S$ with $|S|\leq\delta_dN$, and put
\[
 m_C(S)=\#\{(i,j):j\in S\,,\ C_{ij}\ne0\}\,,
 \qquad
 N_C(S)=\{i:C_{ij}\ne0\text{ for some }j\in S\}\,.
\]
If $w\ne1$, then
$m_C(S)\leq(d+1)|S|$ and $N_C(S)=S\cup\Gamma_A(S)$.  If $w=1$, let
$\ell_S$ be the number of adjacency loops based in $S$.  Cancellation
of such loops leaves
\[
 m_C(S)=(d+1)|S|-2\ell_S\,,
 \qquad
 |N_C(S)|>\left(d-\frac14\right)|S|-\ell_S\,.
\]
In both cases the number $n_1(S)$ of rows containing exactly one
nonzero entry in the columns $S$ satisfies
\[
 n_1(S)\geq2|N_C(S)|-m_C(S)
 >\left(d-\frac32\right)|S|=\gamma|S|\,.
\]
Each column meets at most $d+1$ of these rows.  Thus at least
\[
 \frac{n_1(S)}{d+1}>\theta|S|
\]
columns can be assigned distinct rows, each meeting only its assigned
column in $S$.  Remove these columns and repeat.  If $S_k$ denotes the
set remaining after $k$ rounds, then
\[
 |S_k|\leq(1-\theta)^k|S|=e^{-\lambda k}|S|\,.
\]
Thus removing these columns exhausts $S$ in at most
\[
 m\leq1+\left\lceil\frac{\log N}{\lambda}\right\rceil
 \eqlabel{peeling-depth}
\]
rounds.

Let $J_k$ be the columns removed in round $k$ and put
$R_k=J_1\cup\cdots\cup J_k$, with $R_0=\varnothing$.
For $x$ supported on $S$, put $y=Cx$.  The selected rows give
\[
 \|x_{J_k}\|
 \leq a^{-1}\bigl(\|y\|+L\|x_{R_{k-1}}\|\bigr)\,,
 \qquad
 \|x_{R_k}\|\leq(1+L/a)\|x_{R_{k-1}}\|+a^{-1}\|y\|\,.
\]
Using \eqref{peeling-depth}, we obtain
\[
 \|x\|\leq\frac{(1+L/a)^m-1}{L}\|Cx\|
 \leq\frac{(1+L/a)^2}{L}N^\kappa\|Cx\|\,.
 \eqlabel{small-supp-poly}
\]
The same argument applied to $A^{\mathsf T}-\overline wI$ gives this
bound for $C^*$.

Set
\[
 \delta_*=\min\left(\delta_d,\frac1{4(d+1)}\right)\,,
 \qquad t=N^{-(\kappa+2)}\,.
\]
We claim that every unit vector $v$ satisfying $\|Cv\|\leq t$ obeys
\[
 \sup_{z\in\C}\#\{i:|v_i-z|\leq2t\}<(1-\delta_*)N\,.
 \eqlabel{level-deficit}
\]
Otherwise let $S$ be such a level set and put $J=S^c$.  At most
$d|J|$ rows have an $A$-neighbour in $J$, and at most $|J|$ further
row labels lie outside $S$.  Thus at least
$N-(d+1)|J|\geq3N/4$ indices $i\in S$ have all their $A$-neighbours
in $S$.  On these rows,
\[
 |(Cv)_i-(d-w)z|\leq2Lt\,.
\]
Since $w\ne d$, it follows that
\[
 \sqrt{\frac{3N}{4}}\,|d-w|\,|z|
 \leq\|Cv\|+2Lt\sqrt N\,,
 \qquad
 \|v_S\|\leq\sqrt N(2t+|z|)\,,
\]
and therefore
\[
 |z|\leq C_{d,w}\left(t+\frac{\|Cv\|}{\sqrt N}\right)\,,
 \qquad
 \|v_S\|\leq C_{d,w}\bigl(\sqrt Nt+\|Cv\|\bigr)
 =o(N^{-\kappa})\,.
\]
If $J$ is empty this contradicts $\|v\|=1$; otherwise
$\|v_J\|\geq1/2$ for large $N$, while
\[
 \|Cv_J\|\leq\|Cv\|+L\|v_S\|=o(N^{-\kappa})\,,
\]
contradicting \eqref{small-supp-poly}.  This proves
\eqref{level-deficit}, and the same argument applies to $C^*$.

We now find pairs for which both coordinate differences are large.  If unit vectors
$u,v$ satisfy \eqref{level-deficit}, choose $I_1,I_2,I_3$ independently
and uniformly from $[N]$.  Conditionally on $I_1$, the events
\[
 |u_{I_1}-u_{I_2}|>2t\,,
 \qquad |v_{I_1}-v_{I_3}|>2t
\]
have conditional probabilities at least $\delta_*$.  Since $I_2$ and
$I_3$ are conditionally independent, their intersection has conditional
probability at least $\delta_*^2$.  On this intersection, if the pairs
$(I_1,I_2)$ and $(I_1,I_3)$ both fail to
have the two coordinate differences larger than $t$, then
\[
 |u_{I_2}-u_{I_3}|>t\,,
 \qquad |v_{I_2}-v_{I_3}|>t\,.
\]
Thus one of the three pairs always has this property, and
\[
 \delta_*^2N^3
 \leq6N\#\{i<j:|u_i-u_j|>t\,,\ |v_i-v_j|>t\}\,.
\]
Hence at least $(\delta_*^2/3)D_N$ unordered pairs have both
differences larger than $t$.  If
$\tau_r(C)\leq t$, apply this observation to arbitrary unit vectors in
$U_r(C)$ and $V_r(C)$.  Their coordinate differences are bounded by
the corresponding projection norms, so \eqref{positive-visibility}
holds with $T=\kappa+2$ and $g=\delta_*^2/3$.
\end{proof}

\subsection{Proof of the small-ball theorem}

\begin{proof}[Proof of Theorem~\ref{thm:small}]
Fix $c>0$, let $A$ be uniform on $\cM_{N,d}$, and put $C=A-wI$.
Take $T,g$ and $\cG_N$ from Proposition~\ref{prop:visibility}, and put
\[
 \delta_N=\Pp(\cG_N^c)=o(1)\,,\qquad
 t=N^{-T}\,.
\]
On $\cG_N$, simultaneously for every $1\leq k<N$, if
$\tau_k(C)\leq t$,
then at least $gD_N$ labels $e$ satisfy
\[
 \|P_{U_k(C)}a_e\|>t\,,
 \qquad
 \|P_{V_k(C)}a_e\|>t\,.
 \eqlabel{term-rank-vis}
\]

Put $H=cN$.  Write $w=\xi/q$, with
$\xi\in\Z[i]$ and $q\in\mathbb N_+$, and set
\[
 L=d+|w|\,.
\]
Choose a fixed integer $R\geq1$ so large that
\[
 \frac{2\log(qL)}{R+1}<\frac c2\,,
 \eqlabel{terminal-R-choice}
\]
and put
\[
 b=2^{1/R}\,,\qquad
 S=\sum_{j=0}^{R-1}b^j=\frac{b^R-1}{b-1}\,,\qquad
 a_0=\frac1{4S}\,.
\]
For an invertible $C$ satisfying $\tau_1(C)\leq e^{-H}$, define
\[
 x_j=\min(H,(-\log\tau_j(C))_+)\,,
 \qquad 1\leq j\leq R+1\,.
\]
The nonzero Gaussian integer $\det(qC)$ has modulus at least one, so
\[
 q^{-N}\leq\prod_{j=1}^N\tau_j(C)
 \leq\tau_{R+1}(C)^{R+1}L^{N-R-1}\,.
\]
Thus $x_{R+1}\leq N\log(qL)/(R+1)$.  Since $x_1=H$ and $b^R=2$,
\eqref{terminal-R-choice} yields
\[
 H-b^Rx_{R+1}\geq\frac H2\,.
\]
The identity
\[
 H-b^Rx_{R+1}
 =\sum_{r=1}^Rb^{r-1}(x_r-bx_{r+1})
\]
therefore shows that, for some $r\leq R$,
\[
 \Delta_r:=x_r-bx_{r+1}\geq\frac{H}{2S}\,.
\]
The interval
\[
 I_r(C)=\left[x_r-\frac{\Delta_r}{2},x_r\right]
\]
has length at least $a_0H$, is contained in $[a_0H,H]$, and every
$h\in I_r(C)$ satisfies
\[
 \tau_r(C)\leq e^{-h}\,,
 \qquad
 \tau_{r+1}(C)\geq e^{-h/b}\,.
\]
Indeed, $h\leq x_r$, while
$h\geq(x_r+bx_{r+1})/2\geq bx_{r+1}$.  Since $b>1$, the inequalities
$bx_{r+1}\leq h\leq H$ imply $x_{r+1}<H$; hence
$\tau_{r+1}(C)\geq e^{-x_{r+1}}\geq e^{-h/b}$.  Thus, on putting
\[
 E_{r,h}^{(b)}=
 \{\tau_r(C)\leq e^{-h}\,,\ \tau_{r+1}(C)\geq e^{-h/b}\}\,,
\]
we have the pointwise bound
\[
 \begin{split}
 a_0H\,\mathbf1_{\{\det C\ne0\,,\ \tau_1(C)\leq e^{-H}\}}
 \leq
 \int_{a_0H}^{H}\sum_{r=1}^R
       \mathbf1_{E_{r,h}^{(b)}}\,dh\,.
 \end{split}
 \eqlabel{term-cover}
\]

We next prove, uniformly for $1\leq r\leq R$ and
$a_0H\leq h\leq H$, that
\[
 \Pp(E_{r,h}^{(b)})=o(1)\,.
 \eqlabel{term-strata}
\]
Reduce, if necessary, the constant $c_0$ in
Lemma~\ref{lem:cluster} so that $c_0\leq1$, and put
$p_N=c_0t^2$.  Since $R$ is fixed and
\[
 (1-b^{-1})h\geq(1-b^{-1})a_0cN\gg_{c,d,w,R}\log N\,,
\]
for all sufficiently large $N$,
\[
 e^{-h}\leq p_N^R e^{-h/b}\,.
 \eqlabel{term-cluster}
\]

Put $\varepsilon=e^{-h}$ and $\eta=e^{-h/b}$.  Starting with
$A_0=A$ and $C_0=C$, after choosing $e_j$ set
\[
 A_j=T_{e_j}A_{j-1}\,,
 \qquad C_j=A_j-wI\,.
\]
On $E_{r,h}^{(b)}$, if $A_0,\ldots,A_{j-1}\in\cG_N$ and
$e_1,\ldots,e_j$ are chosen visible at the successive ranks
$r,r-1,\ldots,r-j+1$, then
Lemma~\ref{lem:cluster} and \eqref{two-gauges} give
\[
 \tau_{r-j}(C_j)\leq\varepsilon\,,
 \qquad
 \tau_{r-j+1}(C_j)\geq p_N^j\eta\,,
 \qquad 0\leq j\leq r\,,
\]
where the first inequality is omitted for $j=r$.  The condition at
the step from $j$ to $j+1$ is
\[
 \varepsilon\leq p_N^{j+1}\eta\,,
\]
which follows from \eqref{term-cluster} because
$j+1\leq r\leq R$ and $p_N<1$.

Here $\varepsilon<t$ uniformly for $h\geq a_0cN$, so
whenever $A_j\in\cG_N$, \eqref{term-rank-vis} gives at least $gD_N$
choices for the next label; right visibility is preserved by
\eqref{gauge-visible}.  Call an $r$-step path successful if it starts
in $E_{r,h}^{(b)}$ and has these properties.  Every successful path ends at
the invertible shifted matrix $B=C_r$, satisfying
\[
 s_{\min}(B)\geq\gamma_{r,h}\,,\qquad
 \gamma_{r,h}=p_N^r e^{-h/b}\,.
 \eqlabel{term-endpoint}
\]
Put
\[
 \varkappa_0=\frac14(1-b^{-1})a_0c>0\,.
\]

Let $g_N=\lfloor gD_N\rfloor/D_N$, so $g_N\geq g/2$ eventually,
and let $\mu_{r,h}$ be the proportion of successful paths in
$\cM_{N,d}\times\bigl(\binom{[N]}2\bigr)^r$.
We prove
\[
 g_N^r\Pp(E_{r,h}^{(b)})-r\delta_N
 \leq\mu_{r,h}\leq\zeta_N(\varkappa_0)\,.
 \eqlabel{terminal-path-mass}
\]
For the lower bound, put $m=\lfloor gD_N\rfloor$.  Recursively at each
history choose a
set of exactly $m$ next labels: use visible labels satisfying
\eqref{term-rank-vis} when the preceding matrices lie
in $\cG_N$ and satisfy the displayed singular-value bounds, and use
arbitrary labels otherwise.  The selected
family starting from $E_{r,h}^{(b)}$ has normalized mass
\[
 \frac{\#\{\text{selected paths starting in }E_{r,h}^{(b)}\}}
 {|\cM_{N,d}|D_N^r}
 =g_N^r\Pp(E_{r,h}^{(b)})\,.
\]
For every $0\leq j<r$,
\[
 \frac{\#\{(A,e_1,\ldots,e_r):A_j\notin\cG_N\}}
 {|\cM_{N,d}|D_N^r}=\delta_N\,,
\]
because $A_j$ is uniform on $\cM_{N,d}$.  Every selected path with
$A_0,\ldots,A_{r-1}\in\cG_N$ is successful.  For each $j$, the
selected paths with $A_j\notin\cG_N$ form a subset of the event counted
in the last display.  A union bound proves the left
inequality in \eqref{terminal-path-mass}.

For the upper bound, we use the right visibility of the last label.  Let $e=e_r$
be the last label, put $a=a_e$, and choose a unit vector $v$ spanning
the bottom right singular subspace of $C_{r-1}$.  By construction,
\[
 \|C_{r-1}v\|\leq e^{-h}\,,\qquad |a^*v|>t\,.
\]
With $P_e=I-aa^*$, set
\[
 M=C_{r-1}P_e=B+waa^*\,,\qquad x=P_ev\,.
\]
Then
\[
 \|Mx\|=\|C_{r-1}v\|\leq e^{-h}\,,
 \qquad |a^*x|=|a^*v|>t\,.
\]
Put $s=a^*x$ and $r_0=Mx$.  Since $Bx=-was+r_0$,
\[
 (1+wa^*B^{-1}a)s=a^*B^{-1}r_0\,.
\]
Using $|s|>t$, $\|a\|=\sqrt2$, and
\eqref{term-endpoint} gives
\[
 |1+wa^*B^{-1}a|
 \leq\sqrt2\,p_N^{-r}t^{-1}e^{-(1-b^{-1})h}\,.
 \eqlabel{term-inh-res}
\]
The prefactor is polynomial in $N$, uniformly for $r\leq R$.  Hence
the last forward label satisfies
\[
 |a^*B^{-1}a+1/w|\leq e^{-\varkappa_0N}
 \eqlabel{term-res-label}
\]
for every sufficiently large $N$, uniformly in $r,h$.
Indeed, the left side is
$|w|^{-1}|1+wa^*B^{-1}a|$, and the fixed factor $|w|^{-1}$ is absorbed
by the exponential decay in \eqref{term-inh-res}.

Reverse the labelled path.  For a fixed reverse starting point $B$,
its first label has at most $\mathcal R_{\varkappa_0}(B)$ choices by
\eqref{term-res-label}; the remaining $r-1$ labels have at
most $D_N^{r-1}$ choices.  Path reversal is a bijection, including for
repeated labels, and therefore
\[
 \mu_{r,h}
 \leq\frac1{|\cM_{N,d}|D_N^r}
 \sum_{A'\in\cM_{N,d}}\mathcal R_{\varkappa_0}(A'-wI)D_N^{r-1}
 =\zeta_N(\varkappa_0)=o(1)\,.
\]
The last equality is Proposition~\ref{prop:resonance}.
This proves the right inequality in \eqref{terminal-path-mass}.  Hence
\[
 \sup_{\substack{1\leq r\leq R\\a_0H\leq h\leq H}}
 \Pp(E_{r,h}^{(b)})
 \leq\left(\frac2g\right)^R
       \bigl(\zeta_N(\varkappa_0)+R\delta_N\bigr)=o(1)\,,
\]
which proves \eqref{term-strata}.

Taking expectations in \eqref{term-cover} and applying
Fubini now gives
\[
 \begin{split}
 a_0H\,\Pp(\det C\ne0\,,\ \tau_1(C)\leq e^{-H})
 &\leq HR
 \sup_{\substack{1\leq r\leq R\\a_0H\leq h\leq H}}
 \Pp(E_{r,h}^{(b)})=o(H)\,.
 \end{split}
\]
Equation \eqref{exact-singularity} proves
the theorem in the loops-allowed model.  Finally,
Lemma~\ref{lem:model-transfer} shows that $\diag A=0$ has
probability bounded away from zero, so conditioning transfers the estimate to
$\cM^0_{N,d}$.
\end{proof}

\section{A polynomial hard-edge count}\label{sec:count}
We use the following theorem in Section~\ref{sec:girko}
to remove the logarithmic singularity at zero.

\begin{theorem}
\label{thm:polynomial-count}
Fix an integer $d\geq2$ and
$w\in\Q(i)\setminus\{0,d\}$.  There is a finite
$K=K(d,w)$ such that, if $A$ is uniform on either $\cM_{N,d}$ or
$\cM^0_{N,d}$, then
\[
 n_{N^{-K}}(A-wI)=O_{\Pp}(1)\,.
\]
\end{theorem}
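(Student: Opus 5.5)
The proof follows the two-stage scheme sketched in the outline. We work in the loops-allowed model $\cM_{N,d}$ and transfer at the end. The transfer uses \eqref{diagonal-positive}: since $\Pp(\diag A=0)$ is bounded away from zero, any bound $\limsup_N\Pp(n_\lambda\geq s)\leq\epsilon_s$ with $\epsilon_s\to0$ survives conditioning up to a factor $e^{d}+o(1)$.

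\emph{Stage one: an augmented rectangular invertibility estimate.} We show that for some $K_0=K_0(d,w)$, with probability $1-o(1)$, there is no vector $v$ with $\|v\|=1$ and $\|Cv\|\leq N^{-K_0}$ that has a level set $\{i:|v_i-z|\leq N^{-K_0}\}$ of size at least $\rho N$, for each fixed $\rho>0$. The same statement is required for $C^*$ and for the columns of $C$ with a bounded number of labels deleted.

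Polynomially sparse vectors are handled by \eqref{small-supp-poly}, through the peeling argument in Proposition~\ref{prop:visibility}. For spread vectors with a large atom, we argue by contradiction. We rescale $v$ and sample a uniform row $i$ together with its $d$ neighbours in $\widehat A$. Local weak convergence of the configuration model, combined with the expansion of Lemma~\ref{lem:expansion}, gives along a subsequence a limiting joint law of $(X,Y_1,\ldots,Y_d)$. This law satisfies $Y_1+\cdots+Y_d=wX$ almost surely, has identical nonconstant marginals, and has an atom.

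We then construct a finite Borel partition $\mathcal P$, refining the atom and a few level cells, for which
\[
H(\mathcal P(X),\mathcal P(Y_1),\ldots,\mathcal P(Y_d))<dH(\mathcal P(X)).
\]
The idea is that the linear constraint forces a deficit in joint entropy relative to the independent coupling. Colouring the coordinates of $v$ by $\mathcal P$ and counting configurations with the prescribed colour statistics gives an expected number of at most $e^{-cN}$. A union bound over the polynomially many colour profiles, together with an $N^{-K_0}$-net argument inside the cells, then yields the contradiction.

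\emph{Stage two: counting small singular values.} Set $\lambda=N^{-K}$ with $K\gg K_0$, and let $\Psi_\lambda(A)=\log\det(C^*C+\lambda^2I)$. On the Stage~one event, fix $s$ and suppose $n_\lambda(C)\geq s$. Then every unit vector in the bottom $s$-dimensional left and right singular subspaces is spread. The three-index argument in the proof of Proposition~\ref{prop:visibility}, now using a level-deficit $\delta_*$ close to $1$, shows that more than half of the labels $e$ (a proportion $\vartheta>1/2$) have both $\|P_Ua_e\|$ and $\|P_Va_e\|$ at least $N^{-T}$.

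For these labels, a matrix-determinant-lemma computation for the rank-two perturbation $C_eP_e=C+wa_ea_e^*$ should show $\Psi_\lambda(T_eA)>\Psi_\lambda(A)$ and $n_\lambda(T_eA)\leq n_\lambda(A)-1$. This computation is the regularized analogue of Lemma~\ref{lem:cluster} with $\eps$ replaced by $\lambda$.

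We orient each transposition edge in the direction of increasing $\Psi_\lambda$; this gives an acyclic structure on the reversible chain with uniform stationary law. For the set $\Omega_s=\{n_\lambda\geq s\}$, counting boundary edges gives
\[
\vartheta\,\Pp(\Omega_s)\leq\Pp(\text{edge leaves }\Omega_s\text{ downward})\leq\tfrac12\Pp(\Omega_{s-1})+o(1).
\]
The factor $\tfrac12$ comes from reversibility: each undirected edge is counted once in each direction, and only the decreasing direction is charged. Iterating gives $\limsup\Pp(\Omega_s)\leq(2\vartheta)^{-s}$, which is the claimed tightness.

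\emph{Main obstacle.} The hardest step is the entropy inequality in Stage one. One must produce a single finite partition that witnesses strict entropy deficit for an arbitrary limiting law with an atom, uniformly enough to survive discretization. The first approach to try is taking $\mathcal P$ to be the atom versus a fine dyadic partition of its complement. For that choice we would show that equality forces $Y_1,\ldots,Y_d$ to be conditionally independent given the cells. That in turn forces the law to be a Dirac mass, since $w\neq d$ and the relation $Y_1+\cdots+Y_d=wX$ is incompatible with independence and a nonconstant law carrying an atom.
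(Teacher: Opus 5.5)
Your two-stage architecture follows the paper's, but the Stage two counting fails as written. The inequality $\vartheta\,\Pp(\Omega_s)\leq\Pp(\text{edge leaves }\Omega_s)$ needs every $A\in\Omega_s$ to have $\vartheta D_N$ labels that leave $\Omega_s$. By interlacing, one transposition changes $n_\lambda$ by at most one, so no label leaves from an $A$ with $n_\lambda(A)\geq s+1$.

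Even when $n_\lambda(A)=s$, the inequality rests on $n_\lambda(T_eA)\leq n_\lambda(A)-1$ for visible $e$, which you cannot prove. There is no spectral gap above $\lambda$. The mechanism of Lemma~\ref{lem:cluster} needs $\eps\leq c_0\eta t^2$ and only lifts to $c_0\eta t^2$, so it says nothing at scale $\lambda$. Nothing prevents the update $wa_ea_e^*$, of norm $2|w|$, from pulling $\tau_{n+1}$ below $\lambda$.

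The role of $\Psi_\lambda$ is to avoid any such claim. For jointly visible $e$, the regularized determinant lemma gives $e^{\Psi_\lambda(T_eA)-\Psi_\lambda(A)}\geq\lambda^2|w|^2L_eR_e\geq|w|^2t^4/(4\lambda^2)\to\infty$. Let $G$ be the rectangular-invertibility event, $\eta_N=\Pp(G^c)$ and $S=G\cap\Omega_s$. Then every vertex of $S$ has at least $\vartheta D_N$ outgoing and at most $(1-\vartheta)D_N$ incoming oriented labels. Summing out-degree minus in-degree over $S$ cancels all internal edges and leaves at least $(2\vartheta-1)D_N|S|$ exiting labels. By interlacing their targets lie in $G^c\cup(\Omega_{s-1}\setminus\Omega_s)$, and each target receives at most $D_N$ labels. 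Hence $(2\vartheta-1)(p_s-\eta_N)\leq p_{s-1}-p_s+\eta_N$, i.e.\ $p_s\leq(2\vartheta)^{-1}p_{s-1}+\eta_N$.

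Separately, $\vartheta>1/2$ cannot come from the three-index argument, which yields at most a $\delta_*^2/3<1/3$ proportion of pairs. Count directly instead: for each $i$, fewer than $\alpha N$ indices $j$ have $|v_i-v_j|<t$. So at most $\alpha+o(1)$ of the pairs fail on each side, leaving $1-2\alpha-o(1)>1/2$ for $\alpha<1/4$.

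Stage one is also left open, at two points.

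First, ``rescale $v$'' does not give a tight, nondegenerate limit with $\|Cz\|_\infty/\tau_N\to0$; local weak convergence of the graph is beside the point here. Vectors whose mass lives on $o(N)$ coordinates without being exactly sparse escape both \eqref{small-supp-poly} and the $1/\sqrt N$ scaling. The paper fixes the scale by superlevel growth. On the expansion event, $|S_{at}|>\gamma|S_t|$ whenever $t\geq t_*$ and $1\leq|S_t|\leq\delta N$. Descending from $\|z\|_\infty\geq c_0N^{K_{\rm H}-1/2}$ until $|S_t|>\delta N$ yields $\tau_N\geq cN^{K_{\rm H}-1/2-\kappa}\to\infty$ and the uniform tail $\mu_N\{|x|>R\}\leq C_{\rm tail}R^{-\beta}$.

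Second, your entropy heuristic treats the pressure as nonpositive, with equality only under independence. For a finite partition, however,
$\chi_{\mathcal P}(\pi)=H(\mathcal P(X)\mid\mathcal P(Y_1),\ldots,\mathcal P(Y_d))-D(\varrho_{\mathcal P}\Vert p_{\mathcal P}^{\otimes d})$,
where $p_{\mathcal P}$ and $\varrho_{\mathcal P}$ are the cell laws of $X$ and of $(Y_1,\ldots,Y_d)$. The conditional term is in general positive, so a case split is needed:
\begin{itemize}
\item If $\pi\{X\in\mathcal A,\ \text{some }Y_j\notin\mathcal A\}>0$ for the atom set $\mathcal A$, then $D(\varrho\Vert\mu^{\otimes d})=\infty$. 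The tail bound keeps the conditional term at most a constant $\log M(d,w)+o(1)$ on dyadic partitions, so the pressure becomes negative.
\item If all $d+1$ atom indicators coincide, the binary partition has pressure $-(d-1)h(s)$, with $h$ the binary entropy of the atomic mass $s$.
\item If $\mu$ is purely atomic, you need $H(\mu)<\infty$, so that singleton partitions drive the pressure to $-D(\varrho\Vert\mu^{\otimes d})<0$.
\end{itemize}
That finiteness is exactly where $w\in\Q(i)$ enters. The limiting closure inequality $\mu(\mathcal F(\mathcal C)\setminus\mathcal C)\geq\theta\mu(\mathcal C^c)$ gives geometric tails for $\mathcal C_{n+1}=\mathcal F(\mathcal C_n)$. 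Meanwhile, coefficients in $(\xi q)^{-n}\Z[i]$ of $\ell^1$-norm $e^{O(n)}$ give $|\mathcal C_n|\leq e^{O(n)}$.
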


Throughout this section, fix
\[
 d\geq2\,,
 \qquad
 w=\frac \xi q\in\Q(i)\setminus\{0,d\}\,,
 \qquad
 q\in\mathbb N_+\,,\quad \xi\in\Z[i]\setminus\{0\}\,,
\]
and put
$
 C=A-wI.
$
Choose
\[
 0<\eps<\frac{d-1}{2}\,,
 \qquad
 0<a<\min\left(\frac12,\frac1{d-1+|w|},\frac{|w|}{d}\right)\,,
\]
and define
\[
 \gamma=\frac{2d-2\eps}{d+1}>1\,,
 \qquad
 \kappa=\frac{\log(1/a)}{\log\gamma}\,.
\]
Fix
\[
 K_{\rm H}>\frac12+\kappa\,,
\]
let $\delta=\delta(d,\eps)$ be supplied by
Lemma~\ref{lem:expansion}, and fix $0<\alpha<1/4$.  These choices
depend on $w$ only through $|w|$ and are therefore the same for $w$
and $\overline w$.

\begin{theorem}\label{thm:HGALR}
For every fixed $c_0>0$, if $A$ is uniform on $\cM_{N,d}$, then with
probability $1-o(1)$ there is no vector $z\in\C^N$ satisfying
\[
 \|Cz\|_\infty\leq1\,,
 \qquad
 \|z\|_2\geq c_0N^{K_{\rm H}}\,,
 \qquad
 \max_{b\in\C}\#\{j:z_j=b\}\geq\alpha N\,.
 \eqlabel{level-vector}
\]
\end{theorem}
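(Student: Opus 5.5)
Assume, for contradiction, that with probability bounded below there is a vector $z$ satisfying \eqref{level-vector}; we follow the two-stage scheme from the outline. Work in the configuration model $\widehat A$ and transfer to $\cM_{N,d}$ at the end by Lemma~\ref{lem:model-transfer}.

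\textbf{Step 1: reduce to the large level set.} Let $b$ be a value attained on a set $S_b$ with $|S_b|\geq\alpha N$. Consider the rows $i$ whose $\widehat A$-neighbourhood together with $i$ itself lies in $S_b$. On such a row,
\[
|(d-w)b|=|(Cz)_i|\leq1\,,
\]
so $|b|\leq|d-w|^{-1}$. Next, subtract $b$ and look at $z'=z-b\one$. It satisfies $Cz'=Cz-(d-w)b\one$, so $\|Cz'\|_\infty=O(1)$, and still $\|z'\|_2\gtrsim N^{K_{\rm H}}$, with $z'$ vanishing on $S_b$.

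\textbf{Step 2: small-support growth.} Run the peeling argument of Proposition~\ref{prop:visibility}, now quantified through $U_A^+$ in Lemma~\ref{lem:expansion}, on the level sets of $|z'|$. Rows with exactly one nonzero entry among columns in a set $S$ of size at most $\delta N$ transmit growth. The constant $a$ and $\gamma=(2d-2\eps)/(d+1)$ are chosen so that the following holds: if the mass of $z'$ above level $\lambda$ sits on a set of size at most $\delta N$, then the mass above level $a\lambda$ occupies a set larger by a factor $\gamma$. Iterating, starting from an entry of size at least $N^{K_{\rm H}-1/2}$, after $\log N/\log\gamma$ steps the support exceeds $\delta N$ while the values remain at least $N^{K_{\rm H}-1/2-\kappa}\to\infty$. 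Hence $z'$ has a set of $\delta N$ coordinates of modulus tending to infinity, as well as a zero set of size $\alpha N$.

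\textbf{Step 3: entropy contradiction.} Normalise $x=z'/M$ with $M$ a suitable quantile. Then $Cx=o(1)$ in $\ell^\infty$ up to the rescaling. Take local weak limits of the row statistics $(x_i,x_{j_1},\dots,x_{j_d})$ over the configuration-model neighbourhoods. This yields a limiting law $(X,Y_1,\dots,Y_d)$ with
\[
Y_1+\dots+Y_d=wX\,,
\]
with all marginals equal to a nonconstant law carrying an atom at $0$ of mass at least $\alpha$. Using the atom and the relation, choose a finite partition $\mathcal P$ (separating $0$, and otherwise fine enough along a direction exhibiting non-degeneracy) such that
\[
H(\mathcal P(X),\mathcal P(Y_1),\dots,\mathcal P(Y_d))<dH(\mathcal P(X))\,.
\]
The configuration-model count of colourings with a prescribed empirical row profile is $\exp\{N[H(\text{row})-dH(\mathcal P(X))]+o(N)\}$. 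Summing over the polynomially many profiles near the limit makes the first-moment probability that any such $z$ exists $o(1)$. Compactness over profiles removes the dependence on the subsequence.

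\textbf{Main obstacle.} The hardest step is constructing the partition with a strict entropy deficit for a general law satisfying the linear relation. An atom at $0$ of mass below one forces $Y$'s to be non-independent given $X$, and the deficit must be quantified by a first-moment bound. Approximating the limit by finitely many cells must also preserve the strict inequality uniformly in $N$.
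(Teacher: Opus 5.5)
There is a genuine gap. The step you call the main obstacle is the heart of the proof, and your proposal only names it. You need, for \emph{every} limit $(X,Y_1,\ldots,Y_d)$ that can arise, a finite partition with strictly negative pressure, and ``separate $0$ and refine along a non-degenerate direction'' does not produce one. Let $I_0$ indicate $X=0$ and $I_j$ indicate $Y_j=0$. For the binary partition $\{\{0\},\{0\}^c\}$ the pressure is $H(I_1,\ldots,I_d)-dh(s)+H(I_0\mid I_1,\ldots,I_d)$. This is nonnegative whenever the input indicators are i.i.d. Under refinement, both $H(\mathcal P(X),\mathcal P(Y_1),\ldots,\mathcal P(Y_d))$ and $dH(\mathcal P(X))$ may diverge, so their difference has no evident sign.

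The paper's Proposition~\ref{prop:negative-quotient} needs a case analysis with several real ingredients.
\begin{itemize}
\item \emph{Purely atomic case, finite entropy.} It first proves $H(\mu)<\infty$. Expansion survives in the limit as the closure inequality of Lemma~\ref{lem:limit-closure}, which gives geometric tails for the iterated closures $\mathcal C_n$. The hypothesis $w=\xi/q\in\Q(i)$ keeps $|\mathcal C_n|\leq e^{O(n)}$ through coefficients in $(\xi q)^{-n}\Z[i]$; this is where the arithmetic hypothesis on $w$ enters this theorem.
\item \emph{Purely atomic case, sign.} Since $X$ is a function of the inputs, the pressure of the point partition is $-D(\varrho\Vert\mu^{\otimes d})$. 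It is strictly negative because i.i.d.\ inputs would give $X_1+\cdots+X_d\eqdist wX$, which an entropy-equality plus characteristic-function argument rules out for nonconstant $\mu$. Finite partitions approximating the atoms inherit the sign.
\item \emph{Mixed case.} Either all $d+1$ atom indicators coincide almost surely, and $\{\mathcal A,\mathcal A^c\}$ has pressure $-(d-1)h(s)$. Or the input law has a part singular to $\mu^{\otimes d}$, so $D=\infty$. Dyadic grids then work because the tail bound \eqref{canonical-tail} keeps the conditional entropy of $\mathcal P_n(X)$ given the input cells bounded.
\end{itemize}
None of these ingredients appears in your sketch.

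Several other steps also need repair.
\begin{itemize}
\item \emph{Step~1 is unjustified.} A set of $\alpha N$ coordinates with $\alpha<1/4$ need not contain any row together with all its out-neighbours. For example, delete from a random half of $[N]$ every row whose out-neighbours all lie in it; about $3N/8$ coordinates remain and no such row is left. So neither $|b|=O(1)$ nor $\|Cz'\|_\infty=O(1)$ follows. The shift is unnecessary: work with $z$ itself, as the paper does, and accept that the limiting atom sits at $\lim c_N/\tau_N$, which may be nonzero.
\item \emph{Quantifier order in the first-moment step.} The partition depends on the limit, hence on the chosen realizations, so a first-moment bound for one fixed partition does not finish. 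The paper applies Lemma~\ref{lem:colouring} simultaneously to all colourings with at most $k_N=\lfloor\log\log N\rfloor$ colours and threshold $-1/k_N$. Your compactness remark could replace this only after you show that the set of attainable limits is weakly closed with all the listed properties preserved.
\item \emph{Symmetrization.} The individual input-slot marginals need not equal $\mu$. You must symmetrize over the $d!$ input orders and use concavity of entropy to transfer the negative pressure back to the actual colouring.
\end{itemize}
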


We first prove the theorem for the configuration matrix
$A=\widehat A$.  For each row $i$, let
$\sigma_1(i),\ldots,\sigma_d(i)$ be the column labels incident to its
labelled half-edges, retaining repetitions.  The row
equation is
\[
 \sum_{j=1}^dz_{\sigma_j(i)}-wz_i=(Cz)_i\,.
 \eqlabel{hardedge-row}
\]

\subsection{Limiting row laws}

In this representation,
\[
 N_S(i)=\one_S(i)+\sum_{j=1}^d\one_S(\sigma_j(i))\,.
\]
We abbreviate $U_A^+(S)$ by $U^+(S)$.

\begin{lemma}
\label{lem:superlevel}
Assume the event in Lemma~\ref{lem:expansion}.  Let $z\in\C^N$ satisfy
$\|Cz\|_\infty\leq1$, and put
\[
 S_t:=\{i:|z_i|>t\}\,.
\]
There is a fixed $t_*=t_*(d,w,a)$ such that
\[
 |S_{at}|>\gamma|S_t|
\eqlabel{superlevel-growth}
\]
whenever $t\geq t_*$ and $1\leq|S_t|\leq\delta N$.
\end{lemma}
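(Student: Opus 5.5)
Apply the augmented expansion of Lemma~\ref{lem:expansion} to $S=S_t$ and use the rows in $U^+(S_t)$, i.e.\ rows $i$ with $N_{S_t}(i)=1$. Such a row has exactly one ``large'' entry among the $d+1$ entries $z_i,z_{\sigma_1(i)},\ldots,z_{\sigma_d(i)}$ appearing in \eqref{hardedge-row}. We will show that $i\in S_{at}$, then count these rows.

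Fix $i\in U^+(S_t)$. There are two cases.

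\emph{Case $i\in S_t$.} Then all $\sigma_j(i)\notin S_t$, and \eqref{hardedge-row} gives
\[
|w||z_i|\le dt+1 .
\]
This is weaker than $|z_i|>t$, so it gives no contradiction when $|w|\le d$. The row is still usable for the count, since $i\in S_t\subset S_{at}$ trivially.

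\emph{Case $i\notin S_t$.} Then exactly one $\sigma_j(i)=k$ lies in $S_t$, and the other $d-1$ neighbours and $z_i$ itself have modulus at most $t$. The row equation gives
\[
|z_k|\le (d-1)t+|w||z_i|+1 ,
\]
so $|z_i|\ge (|z_k|-(d-1)t-1)/|w|$. We have $|z_k|>t$, but this alone is not enough: the bound needs $|z_i|>at$.

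To fix this, apply the same reasoning with thresholds on a finer scale. The defining property of $a$ is $a<1/(d-1+|w|)$. Suppose $|z_i|\le at$. Then
\[
|z_k|\le (d-1)t+|w|at+1 ,
\]
and this does not contradict $|z_k|>t$. So the naive bookkeeping fails. Instead, work with pairs of levels, $S_t\subset S_{at}$, and take $S=S_{at}$ as the set being expanded. For a row $i\in U^+(S_{at})\setminus S_{at}$ whose unique neighbour $k$ in $S_{at}$ lies in $S_t$, the other terms have modulus at most $at$. The row equation then forces
\[
|z_k|\le (d-1)at+|w|at+1<t
\]
once $t\ge t_*$, using $a(d-1+|w|)<1$. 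This contradicts $k\in S_t$.

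So the plan is to argue by contradiction. Suppose $|S_{at}|\le\gamma|S_t|$; then $|S_{at}|\le\gamma\delta N$. Shrink $\delta$ if needed, which means applying the lemma with a smaller $\delta$ or assuming it, so that expansion applies to $S_{at}$ as well. Count incidences: each $k\in S_t$ has $d$ incoming half-edges plus its diagonal position. Every row receiving an edge from $S_t$ must either lie in $S_{at}$ or have $N_{S_{at}}(i)\ge2$. This is the contradiction just shown, together with the case where the row lies outside $S_{at}$, whose only neighbour in $S_{at}$ lies in $S_t$.

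The rows outside $S_{at}$ that touch $S_t$ each absorb at least two units of $N_{S_{at}}$. The total $\sum_i N_{S_{at}}(i)=(d+1)|S_{at}|$. The incidences from $S_t$ number $(d+1)|S_t|$. Combining the $V^+$-expansion of $S_t$ with this double count, the rows of $V^+(S_t)$ outside $S_{at}$ consume at least twice their number. Hence
\[
(d+1)|S_{at}| \ge 2\bigl(|V^+(S_t)|-|S_{at}|\bigr)+|S_{at}| ,
\]
which gives
\[
|S_{at}|\ge \frac{2}{d+1}|V^+(S_t)|>\frac{2(d-\eps)}{d+1}|S_t|=\gamma|S_t| ,
\]
exactly matching $\gamma$.

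The main obstacle is making this double count precise. The rows in $S_{at}$ must be counted correctly in $\sum_i N_{S_{at}}(i)$, since they contribute at least one unit each via $\one_{S_{at}}(i)$. The repeated half-edges of the configuration model must also be handled. I expect the inequality above to come out directly from $\sum_i N_{S_{at}}(i)=(d+1)|S_{at}|$, split over $i\in S_{at}$ (at least $1$ each) and $i\in V^+(S_t)\setminus S_{at}$ (at least $2$ each). Only the $V^+$ part of \eqref{aug-expansion} is needed.
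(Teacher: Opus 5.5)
Your final double count does not give $\gamma$. From $(d+1)|S_{at}|\ge 2(|V^+(S_t)|-|S_{at}|)+|S_{at}|$ you only get $(d+2)|S_{at}|\ge 2|V^+(S_t)|$. Hence $|S_{at}|>\frac{2(d-\eps)}{d+2}|S_t|$. This factor is at most $1$ whenever $d\le 2+2\eps$, in particular for $d=2$, so the argument as written gives no growth there at all. The loss comes from giving every row of $S_{at}$ weight only one in $\sum_iN_{S_{at}}(i)=(d+1)|S_{at}|$. Separately, the contradiction set-up and the shrinking of $\delta$ to expand $S_{at}$ are never used and should be dropped, since the lemma's $\delta$ is fixed.

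The missing idea is the case you discarded. Take $i\in S_t$ with no input in $S_t$, and compare with the finer threshold $at$ rather than $t$. Suppose no input lies in $S_{at}\setminus S_t$ either. Then every input has modulus at most $at$, and \eqref{hardedge-row} gives $|w||z_i|\le 1+dat\le |w|t$ once $t\ge 1/(|w|-da)$. This contradicts $|z_i|>t$, and it is exactly where the condition $a<|w|/d$ is needed. So such rows have $N_{S_{at}}(i)\ge 2$.

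The other rows of $V^+(S_t)$ are also covered:
\begin{itemize}
\item Rows in $S_{at}\setminus S_t$ trivially have $N_{S_{at}}(i)\ge 2$, since they contribute the root plus an input in $S_t$.
\item Your input-case argument, with $t\ge 1/(1-a(d-1+|w|))$, handles the rows in $V^+(S_t)\setminus S_{at}$.
\end{itemize}
Thus every row of $V^+(S_t)$ has $N_{S_{at}}(i)\ge 2$, and therefore $(d+1)|S_{at}|\ge 2|V^+(S_t)|>2(d-\eps)|S_t|=\gamma(d+1)|S_t|$.

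With this repair, your route is a valid variant of the paper's. The paper instead shows that each $i\in U^+(S_t)$ has a second augmented occurrence in $S_{at}\setminus S_t$. This gives $|U^+(S_t)|\le (d+1)|S_{at}\setminus S_t|$, and the paper then invokes the $U^+$ half of \eqref{aug-expansion}. The two routes are equivalent in strength, because that half is itself derived from the $V^+$ half. Both routes need the root case with threshold $at$.
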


\begin{proof}
Put
\[
 t_*=\max\left(\frac1{1-a(d-1+|w|)},
                    \frac1{|w|-da}\right)\,,
\]
whose denominators are positive by the choice of $a$.  Put $S=S_t$
and $T=S_{at}$.  Fix $i\in U^+(S)$.  If its unique occurrence from
$S$ is an input and none of the other $d$ occurrences belongs to
$T\setminus S$, then \eqref{hardedge-row} gives
\[
 |z_{\sigma_j(i)}|
 \leq 1+(d-1+|w|)at\leq t\,,
\]
which is impossible.  If its unique occurrence from $S$ is the root
and none of the $d$ inputs belongs to $T\setminus S$, then
\[
 |w||z_i|\leq1+dat\leq|w|t\,,
\]
which is again impossible.  Thus every $i\in U^+(S)$ has another
augmented occurrence in $T\setminus S$.

Each coordinate in $T\setminus S$ occurs once as a root and $d$ times
as an input, with multiplicity.  Therefore
\[
 |U^+(S)|\leq(d+1)|T\setminus S|\,.
\]
Combining this with \eqref{aug-expansion} gives
\[
 |T|>|S|+\frac{d-1-2\eps}{d+1}|S|=\gamma|S|\,,
\]
as required.
\end{proof}

\begin{proposition}
\label{prop:canonical}
Fix $c_0>0$.  Suppose that, along a sequence $N\to\infty$, the event
in Lemma~\ref{lem:expansion} holds and there are vectors $z=z_N$ satisfying
\eqref{level-vector}.  Let $\mathcal S_d$ denote the permutations of
$[d]$.  Then there are scales $\tau_N\to\infty$ such that the measures
\[
 \mu_N:=\frac1N\sum_{i=1}^N\delta_{z_i/\tau_N}\,,
\]
and
\[
 \pi_N:=\frac1{d!N}\sum_{\rho\in\mathcal S_d}\sum_{i=1}^N
 \delta_{(z_i/\tau_N,z_{\sigma_{\rho(1)}(i)}/\tau_N,\ldots,
                    z_{\sigma_{\rho(d)}(i)}/\tau_N)}
\]
converge weakly after passage to a subsequence.  Their limits $\mu$ and
$\pi=\mathcal L(X,Y_1,\ldots,Y_d)$ have the
following properties:
\begin{enumerate}
\item every marginal of $\pi$ is $\mu$;
\item $\sum_{j=1}^dY_j=wX$ almost surely;
\item $\mu$ is nonconstant and has an atom of mass at least $\alpha$;
\item for some fixed $C_{\rm tail},\beta>0$,
\[
 \sup_N\mu_N\{|x|>R\}\leq C_{\rm tail}R^{-\beta}\,,
 \qquad R\geq1;
\eqlabel{canonical-tail}
\]
\end{enumerate}
\end{proposition}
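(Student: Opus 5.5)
The plan is to choose the scale $\tau_N$ so that the atom survives and the limit is nonconstant, and to obtain the tail bound from Lemma~\ref{lem:superlevel}. Let $b_N$ be a value with $\#\{j:z_j=b_N\}\geq\alpha N$.

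\emph{Choice of scale.} Since $\|z\|_2\geq c_0N^{K_{\rm H}}$, I would first show that the level sets $S_t$ cannot have size at most $\delta N$ at every scale. Define $t_0$ as the smallest $t\geq t_*$ with $|S_t|\leq\delta N$. Iterating \eqref{superlevel-growth} downward from any $t>t_0$ gives $|S_{a^kt}|>\gamma^k|S_t|$ until the size exceeds $\delta N$. Hence $|S_t|\leq\delta N\gamma^{-k}$ whenever $t\geq a^{-k}t_0$, so
\[
 |S_t|\lesssim N(t/t_0)^{-\log\gamma/\log(1/a)}=N(t/t_0)^{-1/\kappa}\,.
\]
In particular $S_t=\varnothing$ once $t\geq t_0N^{\kappa}$, which gives $\|z\|_\infty\leq t_0N^\kappa$ and $\|z\|_2\leq t_0N^{\kappa+1/2}$. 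Because $K_{\rm H}>1/2+\kappa$, this forces $t_0\geq c_0N^{K_{\rm H}-\kappa-1/2}\to\infty$.

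Set $\tau_N=t_0$. Then $\mu_N\{|x|>R\}=|S_{R\tau_N}|/N\leq C R^{-1/\kappa}$, which is item (4) with $\beta=1/\kappa$. Tightness of $\mu_N$, and hence of $\pi_N$ (whose marginals are $\mu_N$ after averaging over label permutations), follows. Prokhorov then gives subsequential limits, and $\pi$ is exchangeable in the $Y$'s.

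\emph{Marginals and the row equation.} The first marginal of $\pi_N$ is $\mu_N$. For each input coordinate $j$, every column appears exactly $d$ times as an input, so averaging over $\rho$ shows each $Y_j$-marginal is $\mu_N$ as well; this gives item (1). For item (2), \eqref{hardedge-row} gives $|\sum_jY_j-wX|\leq1/\tau_N\to0$ on the support of $\pi_N$. The closed set $\{\sum y_j=wx\}$ is then charged fully in the limit, since $\pi_N$ concentrates on a $1/\tau_N$-neighbourhood of it.

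\emph{Atom and nonconstancy (main obstacle).} The point $b_N/\tau_N$ carries mass at least $\alpha$ under $\mu_N$. By the tail bound with $R$ large (using $\alpha>0$), these points stay bounded, so after a further subsequence they converge to some $b$. Weak limits are upper semicontinuous on closed sets, so $\mu\{b\}\geq\limsup\mu_N(\bar B(b_N/\tau_N,\epsilon))\geq\alpha$ after shrinking $\epsilon$; this gives the atom.

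Nonconstancy is the delicate part. By minimality of $t_0$, for $t$ slightly below $t_0$ we have $|S_t|>\delta N$; taking $t=t_0/2$ (or $a t_0$, which is still $\geq t_*$ for large $N$) gives $\mu_N\{|x|\geq a\}\geq\delta$. If $\mu$ were a point mass $\delta_b$ with $|b|\geq a$, then $b$ would have to absorb nearly all mass. I would exclude this using item (2): if $\mu=\delta_b$ then $X=Y_j=b$ and $db=wb$, so $b=0$ because $w\neq d$. That contradicts $\mu\{|x|\geq a\}\geq\delta$, provided mass does not escape the closed set $\{|x|\geq a\}$, and closed-set upper semicontinuity is in the correct direction here. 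Hence $\mu$ is nonconstant, completing item (3).

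I expect the step needing most care to be the choice of $t_0$. It must sit at a size threshold that is compatible both with the growth iteration, which needs $t\geq t_*$ throughout, and with producing mass of order $\delta$ at scale order one. Handling the possible case where $|S_{t_*}|\leq\delta N$ already, i.e.\ when $t_0=t_*$, is excluded by the $\ell_2$ lower bound as shown above.
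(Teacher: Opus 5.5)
Your proposal is correct and follows essentially the same route as the paper: iterate Lemma~\ref{lem:superlevel} to get the polynomial tail with $\beta=\log\gamma/\log(1/a)$, use $K_{\rm H}>\tfrac12+\kappa$ to force $\tau_N\to\infty$, get the atom via closed-set Portmanteau, and get nonconstancy from the row equation together with $w\neq d$. The only difference is cosmetic. You take $\tau_N$ as the least $t\geq t_*$ with $|S_t|\leq\delta N$, whereas the paper takes the first index $J$ in the geometric sequence $a^jM_N/2$ with $|S_{a^JM_N/2}|>\delta N$; this shifts the scale by a factor of about $a$, so your mass lower bound sits on $\{|x|\geq a\}$ instead of the paper's $\{|x|\geq 1\}$.
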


\begin{proof}
Let $M_N=\|z_N\|_\infty$.  The norm assumption in
\eqref{level-vector} gives
\[
 M_N\geq c_0N^{K_{\rm H}-1/2}\,.
\]
Set $t_0=M_N/2$, $t_j=a^jt_0$, and let $J$ be the first index such that
$|S_{t_J}|>\delta N$.  Such a $J$ exists for all large $N$.  Otherwise
Lemma~\ref{lem:superlevel} could be iterated from a nonempty $S_{t_0}$
until the level reached the fixed cutoff $t_*$, giving more than $N$
coordinates because
\[
 (K_{\rm H}-1/2)\frac{\log\gamma}{\log(1/a)}>1\,.
\]
Put $\tau_N=t_J$.  Iterating \eqref{superlevel-growth} up to this index gives
$J\leq\log_{\gamma}(\delta N)+O(1)$ and hence
\[
 \tau_N\geq cM_NN^{-\kappa}
 \geq cN^{K_{\rm H}-1/2-\kappa}\longrightarrow\infty\,.
\]
Minimality and backward iteration give, for every $m\geq1$,
\[
 \frac1N|S_{\tau_Na^{-m}}|\leq
 \delta\gamma^{1-m}\,.
\eqlabel{canonical-shell}
\]
For $m>J$, the set is empty after at most one further step because
$a<1/2$.  Thus \eqref{canonical-shell} is uniform, and interpolation between its levels
gives \eqref{canonical-tail}, with
$\beta=\log\gamma/\log(1/a)>0$.

The definition of this index gives
\[
 \mu_N\{|x|>1\}>\delta\,.
\eqlabel{canonical-mass}
\]
The tail bound makes $(\mu_N)$ tight.  Each coordinate occurs once as
a root and $d$ times among all input slots.  Symmetrization therefore
makes every marginal of $\pi_N$ equal to $\mu_N$, so $(\pi_N)$ is
tight as well.  Extract a common weakly
convergent subsequence.  By applying the closed-set part of Portmanteau
to $\{|x|\geq1\}$, \eqref{canonical-mass} gives
$\mu\{|x|\geq1\}\geq\delta$, so $\mu$ is not concentrated at zero.

Choose $c_N\in\C$ such that $\#\{j:z_j=c_N\}\geq\alpha N$.
Equation \eqref{canonical-shell} shows that $c_N/\tau_N$ remains bounded.  Pass to a further
subsequence on which it converges to $c_\infty$.  Applying Portmanteau to closed disks
and then letting their radii tend to zero gives
\[
 \mu(\{c_\infty\})\geq\alpha\,.
\]
In particular, $\mu$ cannot be a point mass away from zero: if it were
$\delta_c$, then the row equation in item (ii) would give $dc=wc$,
and hence $c=0$ because $w\neq d$, contradicting \eqref{canonical-mass}.

Finally, all marginals of $\pi$ are $\mu$.  Since
$\|Cz\|_\infty/\tau_N\to0$, every limit row satisfies
$\sum_jY_j=wX$.
\end{proof}

\subsection{A finite partition with negative pressure}

For a finite $\mathcal C\subset\C$, define
\[
 \begin{split}
 \mathcal F(\mathcal C):=\mathcal C
 &\cup\left\{w^{-1}(c_1+\cdots+c_d):c_j\in\mathcal C\right\}\\
 &\cup\bigg\{wc'-\sum_{\ell\neq j}c_\ell:
       c',c_\ell\in\mathcal C\,,\ 1\leq j\leq d\bigg\}\,.
 \end{split}
\]

\begin{lemma}
\label{lem:limit-closure}
Let $(\mu_N,\pi_N)\Rightarrow(\mu,\pi)$ be a limit supplied by
Proposition~\ref{prop:canonical}, and put
$\eta=d-1-2\eps>0$ and $\theta=\eta/(d+1)$.  If
$\mathcal C\subset\C$ is finite and $0<\mu(\mathcal C^c)<\delta$, then
\[
 \mu(\mathcal F(\mathcal C)\setminus\mathcal C)
 \geq\theta\mu(\mathcal C^c)\,.
\eqlabel{closure-growth}
\]
\end{lemma}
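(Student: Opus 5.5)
The idea is to run the augmented expansion of Lemma~\ref{lem:expansion} at finite $N$ on the set of coordinates lying away from $\mathcal C$, and then pass to the limit with Portmanteau. The thickenings below are needed because $\mu$ may charge $\mathcal C$ exactly, while $\mu_N$ only approximates it.

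\emph{Setup.} For $\rho>0$ let $\mathcal C_\rho=\{x:\operatorname{dist}(x,\mathcal C)<\rho\}$, which is open, and let $\mathcal F_{\rho}=\{x:\operatorname{dist}(x,\mathcal F(\mathcal C))\leq\rho\}$, which is closed. Since only countably many $\rho$ give $\mu(\partial\mathcal C_\rho)>0$, choose $\rho\downarrow0$ avoiding them. Then $\mu_N(\mathcal C_\rho^c)\to\mu(\mathcal C_\rho^c)$ along the subsequence of Proposition~\ref{prop:canonical}. Moreover $\mu(\mathcal C_\rho^c)\leq\mu(\mathcal C^c)<\delta$, so for large $N$ the set
\[
 S=S_N=\{i:z_i/\tau_N\notin\mathcal C_\rho\}
\]
satisfies $|S|\leq\delta N$. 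If $S=\varnothing$, the finite-$N$ inequality below is trivial. Otherwise Lemma~\ref{lem:expansion} gives $|U^+(S)|>\eta|S|$.

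\emph{Finite-$N$ closure.} Fix $i\in U^+(S)$. Among its $d+1$ augmented occurrences (the root $z_i$ and the inputs $z_{\sigma_j(i)}$, counted with multiplicity), exactly one has $z/\tau_N\notin\mathcal C_\rho$; the others lie within $\rho$ of points of $\mathcal C$. Divide \eqref{hardedge-row} by $\tau_N$ and use $\|Cz\|_\infty/\tau_N\to0$ (since $\tau_N\to\infty$).
\begin{itemize}
\item If the exceptional occurrence is the root, then $z_i/\tau_N$ lies within $d\rho/|w|+o(1)$ of a point $w^{-1}(c_1+\cdots+c_d)$ with $c_j\in\mathcal C$.
\item If it is an input $j$, it lies within $(|w|+d-1)\rho+o(1)$ of a point $wc'-\sum_{\ell\neq j}c_\ell$.
\end{itemize}
With $K=d/|w|+|w|+d$, the exceptional value therefore lies in $\mathcal F_{K\rho}\setminus\mathcal C_\rho$ for large $N$.

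Each coordinate occurs exactly $d+1$ times among all augmented occurrences (once as a root, $d$ times as an input). Hence the number of coordinates $k$ with $z_k/\tau_N\in\mathcal F_{K\rho}\setminus\mathcal C_\rho$ is at least $|U^+(S)|/(d+1)>\theta|S|$. Dividing by $N$,
\[
 \mu_N(\mathcal F_{K\rho}\setminus\mathcal C_\rho)\geq\theta\,\mu_N(\mathcal C_\rho^c)\,.
\]

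\emph{Limits.} The set $\mathcal F_{K\rho}\setminus\mathcal C_\rho$ is closed, so the closed-set part of Portmanteau and the choice of $\rho$ give
\[
 \mu(\mathcal F_{K\rho}\setminus\mathcal C_\rho)\geq\theta\,\mu(\mathcal C_\rho^c)\,.
\]
As $\rho\downarrow0$, the left side is at most $\mu(\mathcal F_{K\rho}\cap\mathcal C^c)$. Since $\mathcal F(\mathcal C)$ is finite, $\mathcal F_{K\rho}\cap\mathcal C^c$ decreases to $\mathcal F(\mathcal C)\setminus\mathcal C$, so this bound tends to $\mu(\mathcal F(\mathcal C)\setminus\mathcal C)$. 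The right side tends to $\theta\mu(\mathcal C^c)$ because $\mathcal C_\rho^c\uparrow\mathcal C^c$. This proves \eqref{closure-growth}.

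The main point needing care is the bookkeeping with the thickenings. The finite-$N$ statement must be formulated with the open $\rho$-neighbourhood, so that $|S|\leq\delta N$ is guaranteed and the boundary of $\mathcal C_\rho$ carries no $\mu$-mass. The target set must be closed, so that the limsup inequality in Portmanteau goes in the right direction. Finally the limit $\rho\to0$ has to be taken in the correct order.
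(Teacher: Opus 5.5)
Your proof is correct. It differs from the paper's in the order in which the limit and the algebra are taken.

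\emph{The paper's route.} It passes to the limit in the joint row law. At finite $N$, augmented expansion says that $\pi_N$ gives mass at least $\eta\,\mu_N((\mathcal C^{(r)})^c)$ to the event that exactly one of the $d+1$ coordinates lies outside $\mathcal C^{(r)}$. Since every marginal of $\pi$ is $\mu$, this event is a $\pi$-continuity set, so the inequality survives $N\to\infty$. The paper then lets $r\downarrow0$ by dominated convergence. Only after that does it use the exact limiting row equation $\sum_jY_j=wX$ to place the unique outside coordinate in $\mathcal F(\mathcal C)$. A union bound over the $d+1$ positions, using equal marginals, supplies the factor $1/(d+1)$.

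\emph{Your route.} You do the closure at finite $N$, with the row equation holding only up to $O(\rho)+o(1)$. This forces you to thicken the target to the closed set $\mathcal F_{K\rho}$. You obtain the factor $1/(d+1)$ by counting slots: the map sending $i\in U^+(S)$ to its exceptional slot is injective, and each coordinate owns exactly $d+1$ slots. After that, only the marginal convergence $\mu_N\Rightarrow\mu$ is needed, through closed-set Portmanteau, followed by the two monotone limits in $\rho$.

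\emph{What each buys.} Your argument never uses the joint limit $\pi$ or items (i)--(ii) of Proposition~\ref{prop:canonical}. The price is the two-radius bookkeeping: the open $\mathcal C_\rho$ for expansion and continuity, and the closed $\mathcal F_{K\rho}\setminus\mathcal C_\rho$ for the upper Portmanteau bound. You handle this correctly, including the slack in $K$ and the order of limits. The paper's argument keeps the algebra exact and removes the error terms by relying on the limit properties already established in Proposition~\ref{prop:canonical}.
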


\begin{proof}
For $r>0$, let $\mathcal C^{(r)}=\bigcup_{c\in\mathcal C}B(c,r)$.
Choose $r_k\downarrow0$ so that $\mu(\partial\mathcal C^{(r_k)})=0$.
For each sufficiently small fixed $r=r_k$, the set
\[
 S_{N,r}:=\{i:z_i/\tau_N\notin\mathcal C^{(r)}\}
\]
has size between $1$ and $\delta N$ for all large $N$.  Applying \eqref{aug-expansion}
and using that every coordinate boundary has $\pi$-mass zero gives
\[
 \pi\{\text{exactly one of }X,Y_1,\ldots,Y_d
                \text{ lies outside }\mathcal C^{(r)}\}
 \geq\eta\mu((\mathcal C^{(r)})^c)\,.
\]
Dominated convergence on the left and continuity from below on the right
as $r_k\downarrow0$ yield the same inequality with $\mathcal C$ in place of
$\mathcal C^{(r)}$.  By the row equation, the unique outside value then
belongs to $\mathcal F(\mathcal C)\setminus\mathcal C$.  Since each of the $d+1$
coordinates has marginal $\mu$, a union bound over its possible position
gives
\[
 \eta\mu(\mathcal C^c)
 \leq(d+1)\mu(\mathcal F(\mathcal C)\setminus\mathcal C)\,,
\]
which is \eqref{closure-growth}.
\end{proof}

For a finite Borel partition $\mathcal P$ of $\C$, define its
pressure under $\pi$ by
\[
\chi_{\mathcal P}(\pi)
 :=H(\mathcal P(X),\mathcal P(Y_1),\ldots,\mathcal P(Y_d))
   -dH(\mathcal P(X))\,.
\]
Throughout this section, $H$ denotes discrete Shannon entropy, with natural
logarithms, and $D(\cdot\Vert\cdot)$ denotes relative entropy.

\begin{proposition}
\label{prop:negative-quotient}
Every limit in Proposition~\ref{prop:canonical} admits a finite Borel partition
$\mathcal P$, whose cells may be chosen to have $\mu$-null boundaries,
such that
\[
 \chi_{\mathcal P}(\pi)<0\,.
\eqlabel{negative-pressure}
\]
\end{proposition}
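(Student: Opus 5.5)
The plan is to write $\chi_{\mathcal P}(\pi)$ as an entropy deficit and show that the row equation $\sum_jY_j=wX$ forces a definite loss. Chain rule and subadditivity give
\[
 H(\mathcal P(X),\mathcal P(Y_1),\ldots,\mathcal P(Y_d))
 \leq H(\mathcal P(X),\mathcal P(Y_1),\ldots,\mathcal P(Y_{d-1}))
 +H(\mathcal P(Y_d)\mid\mathcal P(X),\mathcal P(Y_1),\ldots,\mathcal P(Y_{d-1}))\,,
\]
and the first term is at most $dH(\mathcal P(X))$ because every marginal is $\mu$ (item (i) of Proposition~\ref{prop:canonical}). Equality there would force $\mathcal P(X),\mathcal P(Y_1),\ldots,\mathcal P(Y_{d-1})$ to be independent. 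So $\chi_{\mathcal P}(\pi)$ is at most the conditional entropy of the last cell, minus the total correlation $\mathcal I_{\mathcal P}$ of $(\mathcal P(X),\mathcal P(Y_1),\ldots,\mathcal P(Y_{d-1}))$. I would build $\mathcal P$ so that the first quantity is small and the second is bounded below.

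\emph{Step 1: rigidity at the level of laws.} By item (iii), $\mu$ has an atom of maximal mass $p_*\in[\alpha,1)$. Suppose $X,Y_1,\ldots,Y_{d-1}$ were independent with law $\mu$. Then $Y_d=wX-\sum_{j<d}Y_j$ would be a sum of independent terms, one of which, $wX$, is nonconstant. Its largest atom is therefore at most $p_*\cdot\max_v\Pp(-\sum_{j<d}Y_j=v)$, and when $d\geq3$ the second factor is itself at most $p_*<1$. In every case this is strictly less than $p_*$, which contradicts $Y_d\sim\mu$. So the $d$ coordinates cannot be exactly independent. The remaining work is to see this dependence through a finite partition, quantitatively.

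\emph{Step 2: choice of partition.} Let $\mathcal C$ be a finite set of atoms of $\mu$ containing a maximal atom $c_*$ and capturing all but $\rho$ of the atomic mass. The partition consists of the singletons $\{c\}$ for $c\in\mathcal C$, a tail cell $\{|x|>R\}$, and the remaining bounded region cut into cells of mesh $h$. Radii and mesh are chosen so that all boundaries are $\mu$-null, which is possible since $\mu$ has countably many atoms. Consider the event that $X,Y_1,\ldots,Y_{d-1}$ all lie in singleton cells. On this event the row equation determines $Y_d$ exactly, so its cell contributes no conditional entropy. The remaining conditional entropy comes from the event $E$ that some coordinate lies in the tail cell or in the non-atomic part.
\begin{itemize}
\item On the tail, \eqref{canonical-tail} bounds the mass by $C_{\rm tail}R^{-\beta}$.
\item On the non-atomic part, Lemma~\ref{lem:limit-closure} applied to the finite set $\mathcal C$ (and iterated, replacing $\mathcal C$ by $\mathcal F(\mathcal C)$) should show that mass outside $\mathcal C$ is either tiny or generates new atoms. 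With $\mathcal C$ large, this makes $\Pp(E)$ small.
\end{itemize}
The conditional entropy is then at most $\Pp(E)\log|\mathcal P|+H(\mathbf 1_E)$.

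\emph{Step 3: a gap in the total correlation.} Restrict to the atomic cells and run the atom-comparison argument of Step 1 quantitatively:
\[
 p_*=\Pp(Y_d=c_*)\leq\Pp(\text{the }d-1\text{ atomic cells determine }Y_d=c_*)+\Pp(E)\,.
\]
The first term, computed under the product law of the cells, is at most $p_*(1-\epsilon_0)$ for some fixed $\epsilon_0=\epsilon_0(\mu)>0$. Pinsker's inequality then gives $\mathcal I_{\mathcal P}\geq c\,(p_*\epsilon_0-\Pp(E))^2$.

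\emph{Main obstacle.} The difficulty is the bookkeeping of Step 2. The $\log|\mathcal P|$ factor grows as the partition is refined, while the gap $\epsilon_0$ is fixed. I would first try a coarse partition consisting of finitely many atom singletons together with one or two catch-all cells, choosing $\mathcal C$ through the closure growth \eqref{closure-growth} so that $\Pp(E)\log|\mathcal P|\ll\epsilon_0^2$. The point to check is that $\log|\mathcal P|$ can be kept bounded independently of how small $\Pp(E)$ is made. If $\mu$ has a non-atomic component, Lemma~\ref{lem:limit-closure} should be used to show that such mass propagates to new atoms, so that enlarging $\mathcal C$ captures it rather than leaving it in $E$.
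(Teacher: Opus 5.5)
Your decomposition $\chi_{\mathcal P}(\pi)=H(\mathcal P(Y_d)\mid\mathcal P(X),\mathcal P(Y_1),\ldots,\mathcal P(Y_{d-1}))-\mathcal I_{\mathcal P}$ is correct. The plan built on it has a genuine gap, however, and the main problem is the mixed atomic--nonatomic case. Lemma~\ref{lem:limit-closure} applies only when $0<\mu(\mathcal C^c)<\delta$. The mass it produces in $\mathcal F(\mathcal C)\setminus\mathcal C$ is atomic, so it can never convert non-atomic mass into atoms. In fact the lemma shows the opposite of what you hope. If the non-atomic mass $m$ satisfied $0<m<\delta$, iterate from a finite atom set with $\mu(\mathcal C_0^c)<\delta$: then $m\leq\mu(\mathcal C_n^c)\leq\delta(1-\theta)^n\to0$, a contradiction. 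So whenever $\mu$ is not purely atomic, $m\geq\delta$ and $\Pp(E)\geq\delta$ for every partition. Your bound $\Pp(E)\log|\mathcal P|$ then diverges as the mesh shrinks. Even the sharper observation, that the row equation confines the last label to $M(d,w)$ cells, only gives $\log M(d,w)+o(1)$, which is not small. A fixed lower bound on the total correlation does not beat this.

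The missing idea is a dichotomy on the atom set $\mathcal A$, with $s=\mu(\mathcal A)$.
\begin{itemize}
\item If the atom indicators of $X,Y_1,\ldots,Y_d$ coincide almost surely, take the binary partition $\{\mathcal A,\mathcal A^c\}$, approximated by finitely many small disks. It has zero conditional entropy and total correlation $(d-1)h(s)$, so $\chi=-(d-1)h(s)<0$.
\item If they do not coincide, then with positive probability the inputs lie on one of the countably many hyperplanes $\sum_jy_j=wa$, $a\in\mathcal A$, with a coordinate outside $\mathcal A$. By Fubini this set is $\mu^{\otimes d}$-null, so $D(\varrho\Vert\mu^{\otimes d})=\infty$. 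Along nested grid partitions, the relative entropy diverges while the conditional entropy of the output label stays at most $\log M(d,w)+O(n2^{-\beta n})$ by \eqref{canonical-tail}. Here the total correlation must be driven to infinity, not merely bounded below.
\end{itemize}

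In the purely atomic case, the obstacle you flag is real, and your plan does not resolve it. With infinitely many atoms, making $\Pp(E)\to0$ forces $|\mathcal P|\to\infty$, so $\log|\mathcal P|$ cannot stay bounded. What you actually need is $\mu(\mathcal C_n^c)\log|\mathcal C_n|\to0$. The geometric decay from \eqref{closure-growth} does not give this with the naive bound $\log|\mathcal C_n|=O(d^n)$, because $d(1-\theta)>1$. The paper resolves this arithmetically. Since $w=\xi/q\in\Q(i)$, the elements of $\mathcal C_n$ are combinations of $\mathcal C_0$ with coefficients in $(\xi q)^{-n}\Z[i]$ of $\ell^1$-norm at most $L^n$. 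Hence $|\mathcal C_n|\leq e^{K_0(n+1)}$ and $H(\mu)<\infty$, after which the point partition has pressure $-D(\varrho\Vert\mu^{\otimes d})$. Once finite entropy is known, your decomposition also works, and Step~3 becomes unnecessary: total correlation only increases under refinement, by data processing.

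Step~1 is wrong as written. For independent $U,V$ one only has $\max_z\Pp(U+V=z)\leq\min\bigl(\max_u\Pp(U=u),\max_v\Pp(V=v)\bigr)$, not the product of the maxima. For example, with $d=2$, $w=1$ and $\mu$ uniform on $\{0,1\}$, the variable $X-Y_1$ has an atom of mass $1/2=p_*>p_*^2$. The rigidity you want is true, but it needs a real argument. One option is the paper's: equality of entropies forces the sum to be independent of an input, and characteristic functions then force $\mu$ to be a point mass. A sumset bound would also work.

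Finally, a singleton cell $\{c\}$ at an atom never has $\mu$-null boundary. Such cells must be replaced by small disks, using continuity of entropy on a finite alphabet.
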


\begin{proof}
Let $\varrho=\mathcal L(Y_1,\ldots,Y_d)$.  We separate the purely atomic
and mixed cases.

\smallskip
\noindent\emph{Purely atomic marginal.}
Choose a finite set $\mathcal C_0$ with $\mu(\mathcal C_0^c)<\delta$ and put
$\mathcal C_{n+1}=\mathcal F(\mathcal C_n)$.  If
$\mu(\mathcal C_0^c)=0$, then $\mu$ has finite
support.  Otherwise, applying Lemma~\ref{lem:limit-closure} at every
stage for which the tail is positive gives
\[
 \mu(\mathcal C_n^c)\leq\delta(1-\theta)^n\,.
\eqlabel{closure-tail}
\]
Write $\mathcal C_0=\{c_1,\ldots,c_m\}$ and fix
\[
 L>\max(1,d/|w|,|w|+d-1,|\xi q|)\,.
\]
Every
$z\in\mathcal C_n$ has a representation
\[
 z=\sum_{r=1}^m\gamma_rc_r\,,
 \qquad
 \gamma_r\in(\xi q)^{-n}\Z[i]\,,
 \qquad
 \sum_r|\gamma_r|\leq L^n\,.
\eqlabel{closure-coeff}
\]
This is clear for $n=0$.  For the induction step, write
\[
 c^{(j)}=\sum_{r=1}^m
 \frac{\alpha_{j,r}}{(\xi q)^n}c_r\,,
 \qquad \alpha_{j,r}\in\Z[i]\,.
\]
After writing the coefficients with denominator $(\xi q)^{n+1}$, the numerator of the
$r$th coefficient is respectively
\[
 \xi q\alpha_{0,r}\,,
 \qquad q^2\sum_{j=1}^d\alpha_{j,r}\,,
 \qquad \xi^2\alpha_{0,r}-\xi q\sum_{\ell\neq j}\alpha_{\ell,r}
\]
for an old element of $\mathcal C_n$, for
$w^{-1}\sum_{j=1}^dc^{(j)}$, and for
$wc^{(0)}-\sum_{\ell\neq j}c^{(\ell)}$.  These are Gaussian integers.
This proves the assertion about the denominators.  The
coefficient $\ell^1$-norm is multiplied by at most
$\max(1,d/|w|,|w|+d-1)<L$, which proves \eqref{closure-coeff} by
induction.  Moreover,
\[
 \#\{\gamma\in(\xi q)^{-n}\Z[i]:|\gamma|\leq L^n\}
 \leq C(1+(L|\xi q|)^n)^2=e^{O(n)}\,.
\]
Since $m$ is fixed, counting the coefficient vectors gives
\[
 |\mathcal C_n|\leq e^{K_0(n+1)}
\eqlabel{closure-size}
\]
for a fixed $K_0$.

Let $T(x)$ be the least $n\geq0$ for which $x\in\mathcal C_n$, with
an arbitrary value on the null set outside $\bigcup_n\mathcal C_n$.
Since $\mu(\mathcal C_n^c)\to0$, $T(X)$ is finite almost
surely.  Equations \eqref{closure-tail}--\eqref{closure-size} show that
$T(X)$ has a geometric tail, and therefore finite entropy and finite
mean.  Conditional on $T(X)=n$, the variable $X$ takes at most $|\mathcal C_n|$
values, so
\[
 H(X\mid T(X))\leq K_0\E(T(X)+1)<\infty\,.
\]
Since $T(X)$ is a function of $X$,
\[
 H(\mu)=H(X)=H(T(X))+H(X\mid T(X))<\infty\,.
\]
The row equation expresses $X$ as a function of the
input tuple, so
\[
 H(X,Y_1,\ldots,Y_d)-dH(\mu)
 =-D(\varrho\Vert\mu^{\otimes d})\,.
\]
This number is strictly negative.  Equality would make the inputs iid
with law $\mu$.  Thus independent variables $X_1,\ldots,X_d$ with law
$\mu$ would satisfy
\[
 X_1+\cdots+X_d\stackrel{\rm d}=wX\,.
\]
This is impossible for every $w\neq0$.  Put
$S=X_1+\cdots+X_d$.  Since multiplication by $w$ is injective,
$H(S)=H(X)$, whereas
\[
 H(S\mid X_2,\ldots,X_d)=H(X_1)=H(X)\,.
\]
Thus $S$ is independent of $(X_2,\ldots,X_d)$, and in particular of
$X_2$.  Write $S=X_2+T$, where $T$ is independent of $X_2$, and
identify $\C$ with $\R^2$.  If $\phi_X$ and $\phi_T$ are the
corresponding characteristic functions, independence gives
\[
 \phi_T(u)\phi_X(u+v)
 =\phi_T(u)\phi_X(u)\phi_X(v)\,,
 \qquad u,v\in\R^2\,.
\]
For $u$ near zero, $\phi_T(u)\neq0$, so taking $v=-u$ yields
$|\phi_X(u)|=1$.  If $X'$ is an independent copy, then
$e^{\mathrm i\langle u,X-X'\rangle}=1$ almost surely for every $u$ in
a countable dense subset of a neighbourhood of zero.  Continuity in
$u$ forces $X=X'$ almost surely.  This contradicts the fact that
$\mu$ is nonconstant.

Let
\[
 \mathcal Q_n=\bigl\{\{z\}:z\in\mathcal C_n\bigr\}
              \cup\{\mathcal C_n^c\}\,.
\]
These finite partitions are nested and generate the point partition of
the countable atomic support modulo a null set.  Since all the relevant
countable entropies are finite, monotone convergence of entropy gives
\[
 \begin{split}
 H(\mathcal Q_n(X))\nearrow H(X)\,,\quad 
 H(\mathcal Q_n(X),\mathcal Q_n(Y_1),\ldots,
                   \mathcal Q_n(Y_d))\nearrow H(X,Y_1,\ldots,Y_d)\,.
 \end{split}
\]
Consequently,
\[
 \chi_{\mathcal Q_n}(\pi)
 \longrightarrow-D(\varrho\Vert\mu^{\otimes d})<0\,.
\]
Choose an $n$ for which this pressure is negative.  Replace the finitely
many singleton cells by pairwise disjoint disks, with radii outside the
countable set for which a boundary circle has positive $\mu$-mass.  As
the radii decrease, the total mass added by the disks tends to zero;
hence the induced joint finite law converges in total variation to the
singleton law.  Continuity of finite-alphabet entropy preserves the
negativity of the pressure.

\smallskip
\noindent\emph{Mixed atomic--nonatomic marginal.}
Let $\mathcal A$ be the countable atom set and
$s=\mu(\mathcal A)\in(0,1)$.  Suppose first that
\[
 \pi\{X\in\mathcal A\,,\ \text{some }Y_j\notin\mathcal A\}>0\,.
\eqlabel{mixed-event}
\]
Then $\varrho$ has a component singular with respect to $\mu^{\otimes d}$.
On \eqref{mixed-event}, the inputs lie on one of the countably many hyperplanes
\[
 y_1+\cdots+y_d=wa\,,
 \qquad a\in\mathcal A\,,
\]
and at least one coordinate belongs to the nonatomic part.  Fubini's
theorem shows that $\mu^{\otimes d}$ assigns zero mass to this set.
Thus $D(\varrho\Vert\mu^{\otimes d})=\infty$.

Identify $\C$ with $\R^2$.  Choose
$\zeta=(\zeta_1,\zeta_2)\in[0,1)^2$ such that
\[
 \mu\{\re z=\zeta_1+k2^{-n}\}
 =\mu\{\im z=\zeta_2+k2^{-n}\}=0
\]
for every $n\geq1$ and $k\in\Z$.  The excluded set of translates is
countable in each coordinate.  Put
\[
 R_n=[\zeta_1-2^n,\zeta_1+2^n)
     \times[\zeta_2-2^n,\zeta_2+2^n)\,,
\]
and let $\mathcal P_n$ consist of all half-open squares of mesh $2^{-n}$
from the translated grid with boundary lines
$\re z=\zeta_1+k2^{-n}$ and $\im z=\zeta_2+k2^{-n}$ that are
contained in $R_n$, together with the overflow cell $\C\setminus R_n$.
Then $(\mathcal P_n)$ is nested, every cell is a $\mu$-continuity set,
the partitions generate the Borel sigma-field, and
$\log|\mathcal P_n|=O(n)$.  Let $p_n$ be the law of
$\mathcal P_n(X)$, and let $\varrho_n$ be the joint law of
$\mathcal P_n(Y_1),\ldots,\mathcal P_n(Y_d)$.  The monotone
approximation theorem for
relative entropy along nested generating finite partitions gives
\[
 D(\varrho_n\Vert p_n^{\otimes d})
 \nearrow D(\varrho\Vert\mu^{\otimes d})=\infty\,.
\eqlabel{coarse-KL}
\]
Let $O_n=\C\setminus R_n$ and
$E_n=\bigcap_{j=1}^d\{Y_j\notin O_n\}$.  Then
$E_n\in\sigma(\mathcal P_n(Y_1),\ldots,\mathcal P_n(Y_d))$.  On $E_n$,
once the $d$ input cells are fixed,
the identity $X=w^{-1}\sum_jY_j$ confines $X$ to a set of
diameter $O_{d,w}(2^{-n})$.  Hence its possible $\mathcal P_n$-labels
number at most a fixed $M(d,w)$.  On $E_n^c$ there are at most
$|\mathcal P_n|$ possible labels.  Moreover,
\[
 \Pp(E_n^c)\leq d\mu(O_n)=O(2^{-\beta n})
\]
by \eqref{canonical-tail}.  Thus
\[
 \begin{split}
 &H(\mathcal P_n(X)\mid\mathcal P_n(Y_1),\ldots,
                     \mathcal P_n(Y_d))\\
 &\qquad\leq\Pp(E_n)\log M(d,w)
       +\Pp(E_n^c)\log|\mathcal P_n|\leq\log M(d,w)+O(n2^{-\beta n})\,.
 \end{split}
\eqlabel{output-conditional}
\]
Since every marginal of $\varrho_n$ is $p_n$,
\[
 D(\varrho_n\Vert p_n^{\otimes d})=dH(p_n)-H(\varrho_n)\,.
\]
The entropy chain rule therefore gives
\[
 \chi_{\mathcal P_n}(\pi)
 =H(\mathcal P_n(X)\mid\mathcal P_n(Y_1),\ldots,
                         \mathcal P_n(Y_d))
  -D(\varrho_n\Vert p_n^{\otimes d})\,.
\]
Together with \eqref{coarse-KL} and \eqref{output-conditional}, this
proves \eqref{negative-pressure} for all large $n$.

It remains to consider the case where \eqref{mixed-event} has probability zero.  Then
\[
 \{X\in\mathcal A\}\subseteq
 \bigcap_{j=1}^d\{Y_j\in\mathcal A\}
 \quad\pi\text{-almost surely}\,.
\]
The first event has probability $s$, while the second has probability at
most $\Pp(Y_1\in\mathcal A)=s$.  Equality follows, and all $d+1$ atom
indicators coincide.  The common binary partition
$\{\mathcal A,\mathcal A^c\}$ has pressure
\[
 -(d-1)h(s)<0\,,
\]
where $h(s)=-s\log s-(1-s)\log(1-s)$ is binary entropy.  For
$\varepsilon>0$, choose a finite
$\mathcal A_\varepsilon\subset\mathcal A$ such that
$\mu(\mathcal A\setminus\mathcal A_\varepsilon)<\varepsilon$.
Around its points choose pairwise disjoint disks $B(a,r_a)$ with
$\mu(\partial B(a,r_a))=0$ and
\[
 \sum_{a\in\mathcal A_\varepsilon}
 \mu\bigl(B(a,r_a)\setminus\{a\}\bigr)<\varepsilon\,.
\]
For $D_\varepsilon=\bigcup_{a\in\mathcal A_\varepsilon}B(a,r_a)$,
$\mu(\mathcal A\mathbin{\triangle}D_\varepsilon)<2\varepsilon$.
Since all $d+1$ marginals of $\pi$ equal $\mu$, the joint binary laws
induced by $\{\mathcal A,\mathcal A^c\}$ and
$\{D_\varepsilon,D_\varepsilon^c\}$ differ in total variation by at
most $2(d+1)\varepsilon$.  Continuity of entropy on a finite alphabet shows that
the latter pressure tends to $-(d-1)h(s)$ as $\varepsilon\downarrow0$.
It is therefore negative for small $\varepsilon$, and both cells have
$\mu$-null boundary.
\end{proof}

\subsection{A finite-colouring bound}

For a colouring $c:[N]\to[m]$, let $p_c$ be its empirical colour law and
let $\rho_c$ be the empirical law of
\[
 (c(i),c(\sigma_1(i)),\ldots,c(\sigma_d(i)))\,.
\]
Put
\[
 \chi(c)=H(\rho_c)-dH(p_c)\,.
\]

\begin{lemma}\label{lem:colouring}
For every $m\geq2$ and $\zeta>0$,
\[
 \Pp\{\exists c:[N]\to[m]:\chi(c)\leq-\zeta\}
 \leq(dN+1)^{C_dm^{d+1}}e^{-\zeta N}\,.
\]
\end{lemma}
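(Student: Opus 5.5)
We argue by a first-moment union bound over \emph{types} in the configuration model $\widehat A$. Both $p_c$ and $\rho_c$ are functions of the colouring and of the pairing. It therefore suffices to fix an empirical colour law $p$ (counts $n_1,\ldots,n_m$) and a joint type $\rho$ on $[m]^{d+1}$ (counts $M_{\mathbf x}$, $\mathbf x=(x_0,\ldots,x_d)$), compatible in the sense that the root marginal of $\rho$ is $p$ and the input colours in total use each colour $b$ exactly $dn_b$ times. For each such pair we bound
\[
 \E\#\{c:\ p_c=p,\ \rho_c=\rho\}\,,
\]
and then sum over all compatible pairs with $H(\rho)-dH(p)\leq-\zeta$. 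The number of types is at most $(N+1)^{m}(N+1)^{m^{d+1}}\leq(dN+1)^{C_dm^{d+1}}$, which accounts for the polynomial prefactor. Markov's inequality then reduces the lemma to proving
\[
 \E\#\{c:\ p_c=p,\ \rho_c=\rho\}\leq e^{N(H(\rho)-dH(p))}\,,
\]
possibly up to a further polynomial factor of the same form, which is absorbed into $C_d$.

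For fixed $p,\rho$ we count exactly. The number of colourings with colour counts $n$ is the multinomial $\binom{N}{n}\leq e^{NH(p)}$. Given the colouring, the rows are partitioned by colour. Within colour $x_0$ (of size $n_{x_0}$) we must assign to each row an ordered input-colour word $(x_1,\ldots,x_d)$ with prescribed counts $M_{\mathbf x}$; this gives $\prod_{x_0}\binom{n_{x_0}}{(M_{\mathbf x})_{x_1..x_d}}$ choices, and the product is at most $e^{N(H(\rho)-H(p))}$. Once these words are fixed, the row half-edges carry prescribed colours, with exactly $dn_b$ of them coloured $b$, and the column half-edges coloured $b$ also number $dn_b$. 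The probability that a uniform matching of the $dN$ half-edges respects colours is
\[
 \frac{\prod_b(dn_b)!}{(dN)!}=\binom{dN}{(dn_b)_b}^{-1}\leq (dN+1)^{m}e^{-dNH(p)}\,,
\]
by the standard lower bound on multinomial coefficients. Multiplying the three factors gives
\[
 e^{NH(p)}\,e^{N(H(\rho)-H(p))}\,(dN+1)^me^{-dNH(p)}=(dN+1)^m e^{N(H(\rho)-dH(p))}\,,
\]
which is the required per-type bound.

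The only delicate point is the convention for labelled half-edges. The $\sigma_j(i)$ must be read in the labelled order, so that $\rho_c$ is the law of ordered tuples, and the event that the matching is colour-consistent must be counted as a conditional probability given the row words, not double-counted over orderings. I expect this bookkeeping to be the main obstacle. Its resolution is that, for a fixed colouring and fixed row words, the colour-respecting matchings are exactly products of independent uniform matchings within each colour class, which yields the displayed ratio. Summing over types then finishes the proof.
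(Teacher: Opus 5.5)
Your proposal is correct and is essentially the paper's proof: the same first-moment union bound over feasible type pairs $(p,\rho)$, with the same three factors (root colourings of type $p$, input-colour words within each root class, and the colour-consistency probability $\prod_b(dn_b)!/(dN)!$), summed over at most polynomially many types. The only difference is cosmetic: you bound the factors with the method-of-types inequalities for multinomial coefficients, where the paper applies Stirling's formula.
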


\begin{proof}
Fix empirical types $p$ and $\rho$.  Feasibility requires that the
root marginal of $\rho$ is $p$ and that the sum of its $d$ input
marginals is $dp$.  The union bound over all root colourings of type
$p$ and all input-slot colour words of row type $\rho$ is
\[
 \frac{N!}{\prod_a(Np_a)!}
 \frac{\prod_a(Np_a)!}{\prod_{\boldsymbol a}(N\rho_{\boldsymbol a})!}
 \frac{\prod_a(dNp_a)!}{(dN)!}\,.
 \eqlabel{colour-type-count}
\]
The three factors choose the root word, assign the input tuples, and
give the probability that the uniform matching produces the specified
input-slot colours.  Stirling's formula bounds
\eqref{colour-type-count} by
\[
 \exp\bigl(N[H(\rho)-dH(p)]
       +O_d(m^{d+1}\log(dN+1))\bigr)\,.
\]
There are at most $(dN+1)^{m+m^{d+1}}$ feasible pairs of types.
Summing over those with pressure at most $-\zeta$ proves the lemma.
\end{proof}

\begin{proof}[Proof of Theorem~\ref{thm:HGALR}]
Let
\[
 k_N=\lfloor\log\log N\rfloor\,.
\]
Lemma~\ref{lem:colouring}, applied for every $m\leq k_N$ with
$\zeta=k_N^{-1}$, shows that with probability $1-o(1)$,
\[
 \chi(c)>-k_N^{-1}
 \quad\text{for every colouring with at most }k_N\text{ colours}\,.
 \eqlabel{colouring-event}
\]
Indeed, the total exceptional probability is at most
\[
 \sum_{m=1}^{k_N}
 \exp\bigl(-N/k_N+C_dk_N^{d+1}\log(dN+1)\bigr)=o(1)\,.
\]
Suppose that the conclusion of Theorem~\ref{thm:HGALR} fails with
probability bounded away from zero in the configuration model.
Intersect this event with \eqref{colouring-event} and the event in
Lemma~\ref{lem:expansion}, and choose matrices and vectors along a
subsequence.  Proposition~\ref{prop:canonical} gives a further
subsequence with limit $(\mu,\pi)$.
Proposition~\ref{prop:negative-quotient} supplies a finite Borel
partition $\mathcal P$ with $m_0$ cells, $\mu$-null boundaries, and
pressure $-\gamma_0<0$.

Colour the normalized coordinates by $\mathcal P$, denote the
resulting colouring by $c_N$, and put $\rho_N=\rho_{c_N}$.
The partition law of the symmetrized row measure is
\[
 \overline\rho_N=\frac1{d!}\sum_{\sigma\in\mathcal S_d}\rho_N^\sigma\,,
\]
where $\rho_N^\sigma$ permutes the input coordinates.  Because the
cells of $\mathcal P$ have $\mu$-null boundaries, weak convergence of
$\pi_N$ and $\mu_N$ gives
\[
 H(\overline\rho_N)-dH(p_{c_N})\longrightarrow-\gamma_0\,.
\]
By concavity of entropy,
\[
 \chi(c_N)=H(\rho_N)-dH(p_{c_N})
 \leq H(\overline\rho_N)-dH(p_{c_N})\,.
\]
For all sufficiently large $N$,
\[
 m_0\leq k_N\,,\qquad
 \chi(c_N)<-\gamma_0/2<-k_N^{-1}\,,
\]
contrary to \eqref{colouring-event}.  This proves the theorem for
$\widehat A$.  Lemma~\ref{lem:model-transfer} transfers the
estimate to the uniform law on $\cM_{N,d}$.
\end{proof}

\subsection{Rectangular invertibility}

For $J\subset[N]$, let $E_J:\C^J\to\C^N$ be the coordinate embedding and
define
\[
 \mathcal B_J(C):=
 \left[CE_J\ \middle|\ \frac1{\sqrt N}C\one\right]\,.
\eqlabel{aug-rect-matrix}
\]

\begin{proposition}
\label{prop:ARI}
With probability $1-o(1)$ under $\cM_{N,d}$,
\[
 \inf_{|J|\leq(1-\alpha)N}s_{\min}(\mathcal B_J(C))>N^{-K_{\rm H}}\,.
\eqlabel{ARI}
\]
The same assertion holds simultaneously with $C$ replaced by
$C^*=A^*-\overline wI$.
\end{proposition}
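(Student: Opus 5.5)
The plan is to reduce \eqref{ARI} directly to Theorem~\ref{thm:HGALR} by turning an approximate null vector of $\mathcal B_J(C)$ into a vector $z$ satisfying \eqref{level-vector}. The key observation is that, because $|J|\leq(1-\alpha)N$, the columns not in $J$ carry a constant coordinate from the $C\one$ column, and this produces the large level set.

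Suppose first that for some $J$ with $|J|\leq(1-\alpha)N$ we have $s_{\min}(\mathcal B_J(C))\leq N^{-K_{\rm H}}$. There are only finitely many $J$, so this minimum is attained and a witness exists. Choose a unit vector $(x,\beta)\in\C^J\times\C$ with
\[
 \bigl\|CE_Jx+\beta N^{-1/2}C\one\bigr\|\leq N^{-K_{\rm H}}\,,
\]
and put
\[
 y=E_Jx+\beta N^{-1/2}\one\,,\qquad z=N^{K_{\rm H}}y\,.
\]
We check the three conditions in \eqref{level-vector} in turn.
\begin{enumerate}
\item Since $\|Cz\|_\infty\leq\|Cz\|_2\leq1$, the first condition holds.
\item Every coordinate $j\notin J$ equals $N^{K_{\rm H}-1/2}\beta$. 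There are at least $\alpha N$ such coordinates, which gives the level-set condition.
\item For the norm, we use $\|x\|^2+|\beta|^2=1$ and split into two cases.
 \begin{itemize}
 \item If $|\beta|\geq1/2$, then $\|y\|^2\geq|\beta|^2(N-|J|)/N\geq\alpha/4$.
 \item If $|\beta|<1/2$, then $\|x\|\geq\sqrt3/2$, and the triangle inequality gives $\|y\|\geq\|x\|-|\beta|\geq(\sqrt3-1)/2$.
 \end{itemize}
 Hence $\|z\|_2\geq c_0N^{K_{\rm H}}$ with $c_0=\min(\sqrt\alpha/2,1/3)$, a fixed constant.
\end{enumerate}
So the failure of \eqref{ARI}, for any $J$, produces a vector forbidden by Theorem~\ref{thm:HGALR} with this $c_0$. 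The exceptional event of that theorem is a single event covering all $J$ simultaneously, so \eqref{ARI} holds with probability $1-o(1)$.

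For the adjoint, $C^*=A^{\mathsf T}-\overline wI$, and $A^{\mathsf T}$ is again uniform on $\cM_{N,d}$. Moreover $\overline w\in\Q(i)\setminus\{0,d\}$. The constants $\eps,a,\gamma,\kappa,K_{\rm H},\delta,\alpha$ depend on $w$ only through $|w|$, so they are unchanged. Theorem~\ref{thm:HGALR} therefore applies verbatim to $C^*$. The same reduction, followed by a union bound over the two $o(1)$ exceptional events, gives the simultaneous statement.

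I expect no real obstacle beyond the norm lower bound in the third step. That is exactly where the hypothesis $|J|\leq(1-\alpha)N$ is used twice: once to ensure the constant block is large enough for the level-set condition, and once to make the $\beta$ component visible in $\|y\|$.
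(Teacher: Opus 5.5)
Your proposal is correct and follows the paper's proof. Both arguments turn an approximate null vector of $\mathcal B_J(C)$ into a vector forbidden by Theorem~\ref{thm:HGALR}, using the constant block outside $J$ for the level set, and both handle the adjoint by applying that theorem to $A^{\mathsf T}$ with shift $\overline w$ followed by a union bound. The only cosmetic difference is that the paper bounds $\|v\|_2$ from below by the smallest Gram eigenvalue $1-\sqrt{|J|/N}$, whereas your case split on $|\beta|$ gives an equivalent fixed constant.
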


\begin{proof}
Suppose \eqref{ARI} fails.  Choose $(x,t)$ with $\|(x,t)\|_2=1$ such that,
with
\[
 v=E_Jx+\frac t{\sqrt N}\one\,,
\]
one has $\|Cv\|_2\leq N^{-K_{\rm H}}$.  The Gram matrix of the map
$(x,t)\mapsto v$ has smallest eigenvalue at least
$1-\sqrt{|J|/N}\geq1-\sqrt{1-\alpha}$.  Thus, with
$c_\alpha=(1-\sqrt{1-\alpha})^{1/2}>0$,
\[
 c_\alpha\leq\|v\|_2\leq\sqrt2\,.
\eqlabel{ARI-vector-norm}
\]
Put $z=N^{K_{\rm H}}v$.  Outside $J$ all coordinates of $z$ are equal,
so one level set has size at least $\alpha N$, while
$\|z\|_2\geq c_\alpha N^{K_{\rm H}}$ by
\eqref{ARI-vector-norm}.  Moreover,
\[
 \|Cz\|_\infty\leq N^{K_{\rm H}}\|Cv\|_2\leq1\,.
\]
This contradicts Theorem~\ref{thm:HGALR}.  Applying the theorem to the
transpose model and the conjugate shift gives the assertion for $C^*$;
a union bound makes the two conclusions simultaneous.
\end{proof}

\subsection{Visibility and determinant flow}

Choose fixed exponents
\[
 K_{\rm vis}>K_{\rm H}+\frac12\,,
 \qquad
 K_{\rm PC}>2K_{\rm vis}\,,
 \qquad
 \lambda=N^{-K_{\rm PC}}\,,
 \qquad
 t=N^{-K_{\rm vis}}\,.
\]

\begin{lemma}
\label{lem:spread}
On the simultaneous event in Proposition~\ref{prop:ARI}, every unit
vector $u$ satisfying $\|Cu\|_2\leq\lambda$ has fewer than $\alpha N$
coordinates in every disk of radius $t$.  The same holds for unit
$u$ satisfying $\|C^*u\|_2\leq\lambda$.
\end{lemma}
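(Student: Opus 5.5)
Argue by contradiction, reducing the statement to the rectangular estimate \eqref{ARI}.  Suppose $u$ is a unit vector with $\|Cu\|_2\leq\lambda$ and some disk $B(z_0,t)$ contains at least $\alpha N$ coordinates of $u$.  Let $S$ be the set of these coordinates and $J=S^c$, so $|J|\leq(1-\alpha)N$.  The idea is to replace the coordinates in $S$ by the common value $z_0$.  This yields a vector $v$ of the form $E_Jx+\frac{\tau}{\sqrt N}\one$, namely $x=(u_j-z_0)_{j\in J}$ and $\tau=\sqrt N z_0$.  Such a vector lies in the range of $\mathcal B_J(C)$, and we will show that $v$ is nearly in the kernel of $C$ while having norm of order one.

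\textbf{Key steps.}

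\emph{Perturbation.}  Put $v=u-\sum_{i\in S}(u_i-z_0)e_i$.  Then $\|u-v\|_2\leq t\sqrt N$, and since $\|C\|\leq d+|w|$,
\[
 \|Cv\|_2\leq\lambda+(d+|w|)t\sqrt N\,.
\]
Moreover $\|v\|_2\geq1-t\sqrt N\geq1/2$, because $K_{\rm vis}>K_{\rm H}+1/2>1/2$.

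\emph{Writing $v$ through $\mathcal B_J(C)$.}  We have $v=E_J x'+\frac{\tau}{\sqrt N}\one$ with $x'=(u_j-z_0)_{j\in J}$ and $\tau=\sqrt N z_0$.  Hence $Cv=\mathcal B_J(C)(x',\tau)$.  The norm of $(x',\tau)$ is comparable to $\|v\|$: from below via the Gram bound $c_\alpha$ used in the proof of Proposition~\ref{prop:ARI}, and from above trivially (the map $(x,t)\mapsto v$ has norm at most $\sqrt2$).  This gives
\[
 \|(x',\tau)\|\geq\|v\|/\sqrt2\geq1/(2\sqrt2)\,.
\]

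\emph{Contradiction.}  Applying \eqref{ARI},
\[
 N^{-K_{\rm H}}/(2\sqrt2)<\|Cv\|_2\leq N^{-K_{\rm PC}}+(d+|w|)N^{1/2-K_{\rm vis}}\,.
\]
The right side is $o(N^{-K_{\rm H}})$, since $K_{\rm vis}-1/2>K_{\rm H}$ and $K_{\rm PC}>2K_{\rm vis}>K_{\rm H}$.  This is impossible for large $N$.  The statement for $C^*$ is identical, using the simultaneous part of Proposition~\ref{prop:ARI} for $C^*$ and $\|C^*\|=\|C\|$.

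\textbf{Main obstacle.}  There is little difficulty here.  The one point to check is that the exponent choices $K_{\rm vis}>K_{\rm H}+1/2$ and $K_{\rm PC}>2K_{\rm vis}$ make both error terms in the perturbation step beat $N^{-K_{\rm H}}$.  One also needs the lower bound on $\|(x',\tau)\|$ to be uniform in $J$; this holds because the $\sqrt2$ upper bound on the coefficient map does not depend on $J$.
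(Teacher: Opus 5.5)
Your proof is correct and follows the paper's argument exactly: you flatten the level set to $z_0$, write the resulting vector as $\mathcal B_J(C)(x',\sqrt N z_0)$, and contradict \eqref{ARI} using $K_{\rm vis}>K_{\rm H}+1/2$ and $K_{\rm PC}>K_{\rm H}$. One small wording slip: the bound $\|(x',\tau)\|\geq\|v\|/\sqrt2$ that you actually use comes from the $\sqrt2$ bound on the coefficient map, not from the Gram constant $c_\alpha$, which is not needed here.
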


\begin{proof}
Suppose $u_i\in B(z,t)$ on a set $S$ of size at least $\alpha N$, put
$J=S^c$, and replace $u$ on $S$ by $z$.  The resulting vector has the
form $u'=E_Jx+z\one$, and
\[
 \|u-u'\|_2\leq\sqrt Nt\,,
 \qquad
 \|Cu'\|_2\leq\lambda+(d+|w|)\sqrt Nt\,.
\eqlabel{spread-error}
\]
As $K_{\rm vis}>1/2$, $\|u'\|_2=1-o(1)$.  Its coefficient vector in
\eqref{aug-rect-matrix}, namely
$(x,z\sqrt N)$, consequently has norm bounded below.  Proposition
\ref{prop:ARI} gives $\|Cu'\|_2\geq cN^{-K_{\rm H}}$, contradicting
\eqref{spread-error} because $K_{\rm vis}>K_{\rm H}+1/2$ and
$K_{\rm PC}>K_{\rm H}$.
The adjoint statement is identical.
\end{proof}

Suppose $1\leq s\leq n_\lambda(C)$, and let $V_s$ and $U_s$ be respectively
the bottom $s$-dimensional right and left singular subspaces.  Call an
unordered pair $e=\{i,j\}$ right-visible if
\[
 \|P_{V_s}(e_i-e_j)\|_2\geq t\,,
\]
and left-visible if the analogous inequality holds for $U_s$.

\begin{lemma}
\label{lem:majority}
On the event in Lemma~\ref{lem:spread}, uniformly in $s$ with
$n_\lambda(C)\geq s$, the proportion of pairs which are both left- and
right-visible is at least
\[
 1-2\alpha-o(1)>\frac12\,.
\eqlabel{joint-majority}
\]
\end{lemma}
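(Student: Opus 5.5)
Since $n_\lambda(C)\geq s$ gives $\tau_s(C)<\lambda$, the bottom singular subspaces $V_s,U_s$ consist of vectors $u$ with $\|Cu\|_2\leq\lambda$ (resp.\ $\|C^*u\|_2\leq\lambda$). It therefore suffices to fix one unit vector $v\in V_s$ and one unit vector $u\in U_s$ and count pairs. Indeed, $\|P_{V_s}(e_i-e_j)\|_2\geq|\langle v,e_i-e_j\rangle|=|v_i-v_j|$, and similarly for $u$. So a pair $\{i,j\}$ with $|v_i-v_j|\geq t$ and $|u_i-u_j|\geq t$ is both left- and right-visible. Since the bound obtained depends only on these single vectors, it is automatically uniform in $s$.

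The main step is a ``most pairs are separated'' estimate for a single spread vector. By Lemma~\ref{lem:spread}, every disk of radius $t$ contains fewer than $\alpha N$ coordinates of $v$. Fix $i$. The pairs $\{i,j\}$ with $|v_i-v_j|<t$ have $v_j\in B(v_i,t)$, so there are fewer than $\alpha N$ such $j$. Summing over $i$, the number of ordered pairs $(i,j)$ with $i\neq j$ and $|v_i-v_j|<t$ is less than $\alpha N^2$. The number of unordered such pairs is therefore less than $\alpha N^2/2=\alpha D_N(1+O(N^{-1}))$. The same bound holds for $u$, since the adjoint half of Lemma~\ref{lem:spread} applies to unit $u$ with $\|C^*u\|_2\leq\lambda$.

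A union bound then shows that at most $2\alpha D_N(1+o(1))$ unordered pairs fail either separation. Hence the jointly visible proportion is at least $1-2\alpha-o(1)$. Since $\alpha<1/4$ was fixed, this exceeds $1/2$ for large $N$, which is \eqref{joint-majority}.

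The only point needing care, rather than a real obstacle, is that the spread condition is applied at the closed/open threshold correctly. A disk of radius $t$ centred at $v_i$ contains the set $\{j:|v_j-v_i|<t\}$, and Lemma~\ref{lem:spread} bounds it strictly below $\alpha N$ uniformly in the centre. No further structure of $C$ is used.
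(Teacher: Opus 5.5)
Your proposal is correct and follows the paper's proof essentially verbatim. You fix one unit vector in each of $V_s$ and $U_s$, use $\|P_{V_s}(e_i-e_j)\|_2\geq|v_i-v_j|$, and apply Lemma~\ref{lem:spread} to the disk centred at $v_i$ for each $i$, which leaves at most an $\alpha+O(N^{-1})$ proportion of non-visible pairs on each side; a union bound with $\alpha<1/4$ then finishes. Your explicit ordered-to-unordered count and your remark that $n_\lambda(C)\geq s$ forces $\tau_s(C)<\lambda$ (so both halves of Lemma~\ref{lem:spread} apply) only spell out steps the paper leaves implicit.
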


\begin{proof}
Choose any unit $v\in V_s$.  If $\|P_{V_s}(e_i-e_j)\|_2<t$, then
$|v_i-v_j|<t$.  For each fixed $i$, Lemma~\ref{lem:spread}, applied to
the disk centred at $v_i$, gives fewer than $\alpha N$ such $j$.
Thus at most an $\alpha+o(1)$ proportion of unordered pairs are not
right-visible.  The same argument with one unit vector in $U_s$ gives
the corresponding bound for left visibility.  A union bound proves \eqref{joint-majority}.
\end{proof}

For a pair $e=\{i,j\}$, put $a_e=e_i-e_j$.  Then
$P_e=I-a_ea_e^*$ is the transposition matrix swapping columns $i,j$.
Define
\[
 T_eA=AP_e\,,
 \qquad
 \Psi_\lambda(A)=\log\det(C^*C+\lambda^2I)\,.
\]
The maps $T_e$ are labelled involutions preserving the uniform law on
$\cM_{N,d}$.

\begin{lemma}
\label{lem:determinant}
For every $A$ and $e$,
\[
 \exp(\Psi_\lambda(T_eA)-\Psi_\lambda(A))
 \geq\lambda^2|w|^2L_eR_e\,,
\eqlabel{det-lower}
\]
where
\[
 L_e=a_e^*(CC^*+\lambda^2I)^{-1}a_e\,,
 \qquad
 R_e=a_e^*(C^*C+\lambda^2I)^{-1}a_e\,.
\]
If $n_\lambda(C)\geq s$ and $e$ is jointly visible, then
\[
 \Psi_\lambda(T_eA)>\Psi_\lambda(A)
\eqlabel{det-uphill}
\]
for all sufficiently large $N$.
\end{lemma}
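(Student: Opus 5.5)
The plan is to realize both determinants as determinants of a non-Hermitian $2N\times2N$ block matrix, so that the change caused by the transposition becomes a rank-two perturbation, and then apply the matrix determinant lemma.

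\emph{Reduction to a rank-one update.} Write $a=a_e$ and $M=C+waa^*$. By \eqref{two-gauges}, $C_e=T_eA-wI$ satisfies $C_eP_e=M$. Since $P_e$ is a real orthogonal involution,
\[
 \det(C_e^*C_e+\lambda^2I)=\det(P_eC_e^*C_eP_e+\lambda^2I)=\det(M^*M+\lambda^2I)\,.
\]
Hence $\Psi_\lambda(T_eA)-\Psi_\lambda(A)=\log\det(M^*M+\lambda^2)-\log\det(C^*C+\lambda^2)$.

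\emph{Block linearization.} For an $N\times N$ matrix $X$ put
\[
 K(X)=\begin{pmatrix}\lambda I&X\\-X^*&\lambda I\end{pmatrix}\,.
\]
By the Schur complement, $\det K(X)=\det(\lambda^2I+X^*X)>0$. Moreover $K(M)=K(C)+FG^*$, where $F$ has columns $(wa,0)$ and $(0,-\overline wa)$, and $G$ has columns $(0,a)$ and $(a,0)$. The determinant lemma gives
\[
 \exp(\Psi_\lambda(T_eA)-\Psi_\lambda(A))=\det\bigl(I_2+G^*K(C)^{-1}F\bigr)\,.
\]
The blocks of $K(C)^{-1}$ are explicit:
\[
 K(C)^{-1}=\begin{pmatrix}\lambda(\lambda^2+CC^*)^{-1}&-(\lambda^2+CC^*)^{-1}C\\ C^*(\lambda^2+CC^*)^{-1}&\lambda(\lambda^2+C^*C)^{-1}\end{pmatrix}\,.
\]
The $2\times2$ matrix $I_2+G^*K(C)^{-1}F$ therefore has diagonal entries of the form $1+w\beta$ and $1+\overline w\,\overline\beta$, with $\beta=a^*(\lambda^2+C^*C)^{-1}C^*a=a^*C^*(\lambda^2+CC^*)^{-1}a$. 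Its off-diagonal entries are $\lambda w R_e$ and $-\lambda\overline w L_e$, up to the arrangement of signs. The determinant should then come out as
\[
 |1+w\beta|^2+\lambda^2|w|^2L_eR_e\,,
\]
which is at least $\lambda^2|w|^2L_eR_e$, proving \eqref{det-lower}. I expect the main bookkeeping obstacle to be checking the signs here, in particular that the cross term appears as a modulus squared with a plus sign. Positivity of both determinants gives a consistency check, and I would verify the formula in the scalar case $N=1$.

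\emph{Uphill step.} Suppose $n_\lambda(C)\geq s$. Then the bottom $s$ singular values of $C$ are below $\lambda$, so on $U_s$ the operator $(CC^*+\lambda^2)^{-1}$ has eigenvalues at least $(2\lambda^2)^{-1}$. Since $U_s$ is invariant for $(CC^*+\lambda^2)^{-1}$, we get $L_e\geq\|P_{U_s}a\|^2/(2\lambda^2)\geq t^2/(2\lambda^2)$ for a left-visible pair. In the same way, $R_e\geq t^2/(2\lambda^2)$ for a right-visible pair. Hence the right side of \eqref{det-lower} is at least
\[
 \frac{|w|^2t^4}{4\lambda^2}=\frac{|w|^2}{4}N^{2K_{\rm PC}-4K_{\rm vis}}\,.
\]
This tends to infinity because $K_{\rm PC}>2K_{\rm vis}$ and $w\neq0$. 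It therefore exceeds $1$ for large $N$, which gives \eqref{det-uphill}.
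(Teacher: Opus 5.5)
Your proposal is correct and follows the paper's proof. The paper also passes to $C+wa_ea_e^*$ through the unitary factor $P_e$, uses the determinant-lemma identity whose ratio is $|1+w\,a_e^*C^*(CC^*+\lambda^2I)^{-1}a_e|^2+\lambda^2|w|^2L_eR_e$, drops the first term, and bounds $L_e,R_e\geq t^2/(2\lambda^2)$ from joint visibility; your $2N\times2N$ linearization just derives that identity explicitly, and the sign check you flagged does work out, since the off-diagonal entries are $-\lambda\overline w R_e$ and $\lambda wL_e$, whose product is $-\lambda^2|w|^2L_eR_e$.
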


\begin{proof}
Since $P_e$ is unitary,
\[
 (T_eA-wI)P_e=A-wP_e=C+wa_ea_e^*\,.
\eqlabel{transposition-update}
\]
For arbitrary $C,u,v$ and $\lambda>0$, the matrix determinant lemma
applied to the rank-one update gives
\[
 \begin{split}
 &\,\frac{\det((C+uv^*)^*(C+uv^*)+\lambda^2I)}
          {\det(C^*C+\lambda^2I)}\\
          =&\,
 \left|1+v^*C^*(CC^*+\lambda^2I)^{-1}u\right|^2+
 \lambda^2
 (v^*(C^*C+\lambda^2I)^{-1}v)
 (u^*(CC^*+\lambda^2I)^{-1}u)\,.
 \end{split}
\]
Taking $u=wa_e$ and $v=a_e$, then discarding the first nonnegative term,
proves \eqref{det-lower}.

On the bottom right singular subspace, the eigenvalues of
$(C^*C+\lambda^2I)^{-1}$ are at least $(2\lambda^2)^{-1}$; hence right
visibility gives $R_e\geq t^2/(2\lambda^2)$.  Similarly,
$L_e\geq t^2/(2\lambda^2)$.  Therefore
\[
 \exp(\Psi_\lambda(T_eA)-\Psi_\lambda(A))
 \geq\frac{|w|^2t^4}{4\lambda^2}\longrightarrow\infty
\]
by $K_{\rm PC}>2K_{\rm vis}$ and $w\neq0$.  This proves
\eqref{det-uphill}.
\end{proof}

The matrices in \eqref{transposition-update} differ by rank one, up to a unitary factor.
Singular-value interlacing consequently gives
\[
 |n_\lambda(T_eA-wI)-n_\lambda(A-wI)|\leq1\,.
\eqlabel{hardedge-interlacing}
\]
This remains valid at singular matrices and with the strict threshold in
the definition of $n_\lambda$.

\begin{proposition}
\label{prop:boundary}
Under $\cM_{N,d}$, $n_\lambda(A-wI)=O_{\Pp}(1)$.
\end{proposition}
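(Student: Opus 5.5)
The plan is a boundary-counting argument for the level sets of $n_\lambda$ under the transposition chain, following the outline in the introduction. Fix $s\geq1$ and write $\mathcal N_k=\{A\in\cM_{N,d}:n_\lambda(A-wI)\geq k\}$. Let $\mathcal G$ be the simultaneous event of Proposition~\ref{prop:ARI}, so $\Pp(\mathcal G^c)=o(1)$. By \eqref{hardedge-interlacing}, a transposition $T_e$ changes $n_\lambda$ by at most one. Hence every labelled edge $(A,e)$ with $A\in\mathcal N_k$ and $T_eA\notin\mathcal N_k$ lands in $\{n_\lambda=k-1\}$. The aim is the recursive bound
\[
 \Pp(\mathcal N_k)\leq\frac{1}{2\vartheta}\,\Pp(\mathcal N_{k-1})+o(1)\,,\qquad 1\leq k\leq s\,,
\]
for some fixed $\vartheta>1/2$. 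Iterating it gives $\limsup_N\Pp(\mathcal N_s)\leq(2\vartheta)^{-s}$, and letting $s\to\infty$ gives tightness.

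To get the recursion, orient every labelled edge $(A,e)\leftrightarrow(T_eA,e)$ in the direction in which $\Psi_\lambda$ strictly increases. Ties are broken arbitrarily; they do not matter, because strict increase is what we use. Each unordered labelled edge then has exactly one tail. For $A\in\mathcal N_k\cap\mathcal G$, Lemma~\ref{lem:majority} (applied with $s$ replaced by $k$) and Lemma~\ref{lem:determinant} show that at least $\vartheta D_N$ labels are outgoing uphill edges, where $\vartheta=1-2\alpha-o(1)$; this exceeds $1/2$ because $\alpha<1/4$. Any such edge either stays inside $\mathcal N_k$ or leaves it.

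Count the edges with both endpoints in $\mathcal N_k$. Each one is outgoing at exactly one endpoint, so there are at most $\frac12 D_N|\mathcal N_k|$ of them. Therefore the number of uphill edges leaving $\mathcal N_k$ is at least
\[
 \vartheta D_N|\mathcal N_k\cap\mathcal G|-\tfrac12 D_N|\mathcal N_k|
 \geq(\vartheta-\tfrac12)D_N|\mathcal N_k|-D_N|\mathcal G^c|\,.
\]
The intended comparison is with $\{n_\lambda=k-1\}$, which receives these edges. The point to secure is that each point of $\{n_\lambda=k-1\}$ has at most $(1-\vartheta')D_N$ incoming uphill edges from $\mathcal N_k$, with $\vartheta'>1/2$. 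This holds when the point lies in $\mathcal G$ and $k-1\geq1$: its own visible labels are uphill outward from it, so they cannot be incoming. In the case $k-1=0$ we simply use the trivial bound of $D_N$ incoming edges.

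The main obstacle is matching the constants so that the ratio is genuinely $(2\vartheta)^{-1}$ and not merely some constant below one. The refined approach I would try first is to use, instead of the bare edge count, the bound that edges at a point of $\mathcal N_{k}\cap\mathcal G$ going uphill are at least $\vartheta D_N$, while edges entering a point of $\mathcal N_{k-1}\cap\mathcal G$ from $\mathcal N_k$ are downhill at that point. Even a cruder recursion of the form $\Pp(\mathcal N_k)\leq\theta\,\Pp(\mathcal N_{k-1})+o(1)$ with a fixed $\theta<1$ suffices for $O_\Pp(1)$, so I would settle for such a $\theta$ if the sharp constant resists.

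The remaining bookkeeping is to keep all $o(1)$ errors uniform in $k\leq s$; this is possible because $s$ is fixed and Lemma~\ref{lem:majority} is uniform in $s$. I would also use that $T_e$ preserves the uniform law, so that edge counts convert to probabilities.
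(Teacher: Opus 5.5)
Your proposal is correct and is essentially the paper's argument. You orient the transposition edges by strict increase of $\Psi_\lambda$, take the $\vartheta>1/2$ majority of uphill labels on $\mathcal G\cap\mathcal N_k$ from Lemmas~\ref{lem:majority} and~\ref{lem:determinant}, and use \eqref{hardedge-interlacing} to send the boundary flux into $\{n_\lambda=k-1\}\cup\mathcal G^c$. The constant issue you flag is not an obstacle: with the trivial in-degree $D_N$ at targets, your own bounds already give $p_k\le(\vartheta+\tfrac12)^{-1}(p_{k-1}+o(1))$ for $p_k=\Pp(\mathcal N_k)$, and the paper obtains $(2\vartheta)^{-1}$ only because it bounds internal edges by the in-degree $\le(1-\vartheta)D_N$ at each vertex of $\mathcal G\cap\mathcal N_k$ (summing out-minus-in degree over the set) instead of by $\tfrac12D_N|\mathcal N_k|$.
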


\begin{proof}
Let $G$ be the simultaneous left--right event in
Proposition~\ref{prop:ARI}, and write $\Pp(G^c)=\eta_N=o(1)$.  Choose
\[
 \frac12<\vartheta<1-2\alpha\,.
\]
By Lemmas~\ref{lem:majority} and~\ref{lem:determinant}, every
$A\in G$ with $n_\lambda(C)\geq s$ has at least
$\vartheta D_N$ transposition labels that strictly increase the regularized log determinant, where
$D_N=\binom N2$.

Let
\[
 S_s=\{A:n_\lambda(A-wI)\geq s\}\,,
 \qquad p_s=\Pp(S_s)\,,
 \qquad S=G\cap S_s\,.
\]
Orient every non-loop labelled transposition edge in the direction of
strictly increasing $\Psi_\lambda$; ties are left unoriented.  At each
vertex of $S$, the number of outgoing labels is at least $\vartheta D_N$ and
the number of incoming labels is at most $(1-\vartheta)D_N$.  Summing
outdegree minus indegree over $S$ cancels every internal oriented edge.
Consequently the number of outgoing boundary labels is at least
\[
 (2\vartheta-1)D_N|S|\,.
\eqlabel{boundary-divergence}
\]
By \eqref{hardedge-interlacing}, the target of an outgoing boundary label lies either in
$G^c$ or in $S_{s-1}\setminus S_s$.  Every target state has at most
$D_N$ incoming labels.  Dividing \eqref{boundary-divergence} by the size of the uniform state
space gives
\[
 (2\vartheta-1)(p_s-\eta_N)
 \leq p_{s-1}-p_s+\eta_N\,,
\]
or
\[
 p_s\leq\frac1{2\vartheta}p_{s-1}+\eta_N\,.
\]
Writing $r=(2\vartheta)^{-1}<1$ and iterating from $p_0=1$ gives
\[
 p_s\leq r^s+\eta_N\frac{1-r^s}{1-r}\,.
\]
Taking $N\to\infty$ for fixed $s$ gives
\[
 \limsup_{N\to\infty}p_s\leq(2\vartheta)^{-s}\,.
\]
Letting $s\to\infty$ proves tightness.
\end{proof}

Proposition~\ref{prop:boundary} proves
Theorem~\ref{thm:polynomial-count}, with $K=K_{\rm PC}$, under the uniform law on
$\cM_{N,d}$.  Equation~\eqref{diagonal-positive} and conditioning
transfer the conclusion to $\cM^0_{N,d}$.

\section{Hermitization and the oriented Kesten--McKay law}
\label{sec:girko}

Throughout this section, fix $d\geq2$ and let $A$ be uniform on
$\cM^0_{N,d}$.  For $w\in\C$, define the symmetrized empirical
singular-value measure
\[
 \nu_N^w=\frac1{2N}\sum_{j=1}^N
 \bigl(\delta_{\tau_j(A-wI)}+\delta_{-\tau_j(A-wI)}\bigr)
\]
and the counting function
\[
 F_{N,w}(y)=\frac1N\#\{j:\tau_j(A-wI)<y\}
            =\nu_N^w((-y,y))\,.
\]

\subsection{Hermitized convergence and mesoscopic estimates}

Write
\[
 m_{N,w}(z)=\int_{\R}\frac{1}{s-z}\,\dd\nu_N^w(s)\,.
\]
We first record the local-law estimate used when $d\geq3$.
Apply Theorem~1.3 of Adhikari--Dembo \cite{AdhikariDembo} with its
parameter equal to $1/2$, and let
$\beta_{\rm AD}=\varepsilon(1/2,d)>0$ be the exponent in that theorem.
Lemma~A.1 of the same paper bounds the limiting Stieltjes transform
uniformly.  Hence, for each compact $\mathbb D\subset\C$,
Theorem~1.3 gives events of probability $1-O(N^{-1/2})$ on which
\[
 \sup_{w\in\mathbb D}\sup_{N^{-\beta_{\rm AD}}\leq\eta\leq1}
 \im m_{N,w}(\ii\eta)\leq C_{d,\mathbb D}\,.
\]
For $N^{-\beta_{\rm AD}}\leq y\leq1$, the Poisson-kernel bound gives
\[
 F_{N,w}(y)
 \leq 2y\,\im m_{N,w}(\ii y)
 \leq C_{d,\mathbb D}y\,.
\]
For $0<y<N^{-\beta_{\rm AD}}$ use monotonicity at
$N^{-\beta_{\rm AD}}$, while for $y>1$ the assertion is trivial.
Thus, on the same events,
\[
 F_{N,w}(y)\leq C_{d,\mathbb D}
 (y\vee N^{-\beta_{\rm AD}})
\eqlabel{AD-count}
\]
simultaneously for $w\in\mathbb D$ and $y>0$.

In degree two, we use a calculation for free Haar unitaries and
quantitative weak convergence to prove the required averaged estimate.

\begin{lemma}
\label{lem:d2-free-poisson}
Let $(\mathcal M,\varphi)$ be a tracial $W^*$-probability space containing
free Haar unitaries $u_1,u_2$, and let $\operatorname{tr}_2$ be the
normalized trace on $2\times2$ matrices.  Put
\[
 H_w=\begin{pmatrix}0&u_1+u_2-w\\
 u_1^*+u_2^*-\overline w&0\end{pmatrix}\,,
 \qquad
 M_w(\eta)=(\operatorname{tr}_2\otimes\varphi)
       \frac{\eta}{H_w^2+\eta^2}
 =\int_{\R}\frac{\eta}{s^2+\eta^2}\,\dd\nu_2^w(s)\,,
\]
where $\nu_2^w$ is the spectral distribution of $H_w$ with respect to
$\operatorname{tr}_2\otimes\varphi$.
Then
\[
 \sup_{w\in\C}\sup_{0<\eta\leq1}M_w(\eta)<\infty\,.
 \eqlabel{d2-free-poisson-bound}
\]
More precisely, if $r=|w|^2$, then
\[
 M_w(\eta)=\frac{\eta+2q_\eta}
 {r+(\eta+2q_\eta)^2}\,,
 \eqlabel{d2-free-poisson-formula}
\]
where $q_\eta$ is the unique positive solution of
\[
 q_\eta((r-2)+\eta^2+3\eta q_\eta+2q_\eta^2)=\eta\,.
 \eqlabel{d2-free-cubic}
\]
\end{lemma}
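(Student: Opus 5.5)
Let $a=u_1+u_2$. The plan is to compute $M_w(\eta)$ through operator-valued subordination: a $2\times2$ matrix-valued Green function of $H_w$, in which the two free Haar unitaries enter additively. Write $G(\eta)=(\operatorname{id}_2\otimes\varphi)(H_w-\ii\eta)^{-1}$. By symmetry of $H_w$ under $\ii\eta\mapsto$ conjugation, the diagonal entries of $G$ are both equal to $\ii M_w(\eta)$, and the off-diagonal entries are determined by $w$. Set $\widetilde H=\begin{pmatrix}0&a\\a^*&0\end{pmatrix}$. Then $H_w=\widetilde H-W$, where $W=\begin{pmatrix}0&w\\\overline w&0\end{pmatrix}$ is scalar-matrix valued.

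The key structural fact is that each $U_k=\begin{pmatrix}0&u_k\\u_k^*&0\end{pmatrix}$ is a self-adjoint unitary, i.e.\ a symmetry. The $U_k$ are free with amalgamation over $M_2(\C)$. The $M_2(\C)$-valued $R$-transform of a single Haar-unitary block $U_k$ is explicit. For a Haar unitary $u$ one has the scalar formula $\varphi\bigl((1-zu)^{-1}\bigr)=1$, and the $2\times2$ resolvent of $U_k$ can be computed directly. Its operator-valued Cauchy transform is
\[
 \mathcal G_{U}(B)=\begin{pmatrix}\beta_{11}&\cdot\\\cdot&\beta_{22}\end{pmatrix}\,,
\]
obtained from the Haagerup--Larsen $R$-diagonal description. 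Restricted to diagonal arguments $\operatorname{diag}(\ii x,\ii x)$, its $R$-transform is $\mathcal R_U(\ii q I)=\frac{\sqrt{1-4q^2}-1}{2q}\cdot\ii$ up to sign conventions; equivalently, a diagonal entry $g$ satisfies $g\cdot\mathcal R$-relation $\mathcal R(g)g=\ldots$.

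Additivity of $\mathcal R$ over $M_2$ for $U_1+U_2$, together with the shift by $W$, gives the matrix equation
\[
 G=\bigl(-\ii\eta I-W-2\mathcal R_U(G)\bigr)^{-1}\,.
\]
Writing the diagonal of $G$ as $\ii M$ and setting $\eta+2q_\eta$ equal to the imaginary part of the subordination argument, inverting the $2\times2$ matrix
\[
 \begin{pmatrix}-\ii(\eta+2q)&-w\\-\overline w&-\ii(\eta+2q)\end{pmatrix}
\]
gives exactly $M=(\eta+2q)/(r+(\eta+2q)^2)$. This is \eqref{d2-free-poisson-formula}. Substituting $M$ back into the scalar relation defining $q$ through $\mathcal R_U$ (for a symmetry this is the quadratic relation $q=M/(1+\ldots)$, coming from $\mathcal R_U(g)=(\sqrt{1+4g^2}-1)/(2g)$) and clearing denominators yields the cubic \eqref{d2-free-cubic}.

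Uniqueness of the positive root follows because the left side of \eqref{d2-free-cubic} is increasing in $q>0$ once $r-2+\eta^2+3\eta q+2q^2>0$. More precisely, $q\mapsto q(\cdot)$ is zero at $0$ and tends to infinity, and monotonicity holds on the region where it is positive. We select the branch by continuity from $\eta\to\infty$, where $M\sim 1/\eta$. I expect this operator-valued $R$-transform computation, and in particular fixing the correct branch and signs, to be the main obstacle. A fallback is to verify the formula against moments as $\eta\to\infty$ and invoke analyticity in $\ii\eta$.

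For the uniform bound \eqref{d2-free-poisson-bound}, write $s=\eta+2q_\eta$, so that $M=s/(r+s^2)\le 1/(2\sqrt r)$ when $r>0$. It therefore suffices to handle small $r$. For $r<1$, I will show that $q_\eta$ is bounded below. If $q$ were small, the cubic would give $q(r-2+O(q+\eta))\approx\eta>0$, which is impossible since $r-2<0$. Hence $q_\eta\ge c$, and the positive root satisfies $2q^2+3\eta q+r-2\ge \eta/q>0$. This gives $s\ge 2q\ge c'>0$ uniformly for $r\le1$, $\eta\le1$, whence $M\le 1/s\le C$.
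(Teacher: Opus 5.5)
Your route is valid and genuinely different from the paper's, but several steps still have to be made explicit.

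\textbf{Comparison with the paper.} The paper does not use operator-valued freeness. After a diagonal gauge, it views $H_w$ as the weighted adjacency operator of the bipartite $3$-regular tree with edge weights $1,1,|w|$. Schur complementation then gives the closed system $p_\eta=1/(\eta+2q_\eta)$, $q_\eta=1/(\eta+q_\eta+rp_\eta)$, and the root gives $M_w(\eta)=1/(\eta+2q_\eta+rp_\eta)$. Since $q_\eta$ is by construction $-\ii$ times a diagonal resolvent entry, it is automatically the positive root, so no branch selection arises.

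Your $M_2(\C)$-valued computation carries the same information. The Bernoulli relation $\ii q=\mathcal R_U(\ii M)$ is $M=q/(1+q^2)$, which is exactly the paper's identity $1/M=\eta+2q+rp=q+1/q$. Combined with $M=s/(r+s^2)$, $s=\eta+2q$, it gives $q(r+s^2)=s(1+q^2)$, which expands to the cubic. Your approach is more conceptual and would extend to free $R$-diagonal summands without a tree structure. The paper's is shorter and needs no analytic continuation.

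\textbf{What must be supplied.}

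First, $G$ is not diagonal when $w\neq0$: its off-diagonal entries are $-w/(r+s^2)$ and $-\overline w/(r+s^2)$. So knowing $\mathcal R_U$ on scalar arguments does not by itself give $\mathcal R_U(G)=\ii qI$. You need that $\mathcal R_U(b)$ is diagonal and depends only on the diagonal of $b$. This uses that $u_k$ is Haar, not merely that $U_k^2=I$. Only the alternating cumulants $\kappa_{2n}(u,u^*,\ldots)$ survive, which forces each inserted $b$ to enter through a diagonal entry; already $\kappa_2(UbU)=\operatorname{diag}(b_{22},b_{11})$ because $\varphi(u^2)=0$. For the constant symmetry with $u=1$ the property fails.

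Second, the scalar relation is $q=M(1+q^2)$, not $q=M/(1+\cdots)$.

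Third, your analyticity fallback settles the branch issue cleanly:
\begin{itemize}
\item $M_w$ is real-analytic on $(0,\infty)$;
\item the positive root $q_\eta$ is real-analytic, because the cubic has positive $q$-derivative there;
\item the two expressions agree for large $\eta$, where the $R$-transform series converges and produces a small positive root, which uniqueness identifies with $q_\eta$.
\end{itemize}

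Fourth, in the uniform bound you dropped $\eta^2$. The cubic gives $r-2+\eta^2+3\eta q+2q^2=\eta/q>0$, that is, $(\eta+q)(\eta+2q)>2-r\geq1$. Hence $s>1$ and $M\leq1/s<1$ for $r\leq1$ directly, which is the paper's argument. Your displayed inequality $2q^2+3\eta q+r-2\geq\eta/q$ is false as written.

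Finally, the two diagonal entries of $G$ are equal by traciality ($XX^*$ and $X^*X$ are equidistributed), not by a conjugation symmetry.
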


\begin{proof}
After a diagonal gauge, $H_w$ is the weighted adjacency operator of
the bipartite $3$-regular tree whose incident edge weights are
$1,1,|w|$; the underlying tree is the universal cover of the
two-vertex multigraph with three parallel edges.  Let $p_\eta$ and
$q_\eta$ be $-\ii$ times the diagonal
resolvents at $\ii\eta$ after deleting, respectively, the
$|w|$-edge and a unit edge.  Schur complementation gives
\[
 p_\eta=\frac1{\eta+2q_\eta}\,,
 \qquad
 q_\eta=\frac1{\eta+q_\eta+rp_\eta}\,.
\]
Eliminating $p_\eta$ gives \eqref{d2-free-cubic}, and the diagonal
resolvent at the root gives \eqref{d2-free-poisson-formula}.
The polynomial obtained by subtracting $\eta$ from the left-hand side
of \eqref{d2-free-cubic} is negative at zero and tends to infinity.
Its derivative is strictly increasing on $(0,\infty)$, so it has
exactly one positive root.
Put $t=\eta+2q_\eta$.  If $r\geq1$, then
$t/(r+t^2)\leq(2\sqrt r)^{-1}$.  If $0\leq r\leq1$, positivity in
\eqref{d2-free-cubic} implies
$\eta^2+3\eta q_\eta+2q_\eta^2>2-r\geq1$, and hence $t\geq1$ and
$t/(r+t^2)\leq1$.  This proves \eqref{d2-free-poisson-bound}.
\end{proof}

Let $\mathcal S_N$ be the permutations of $[N]$.  For $\theta>0$,
write $\operatorname{Ewens}_N(\theta)$ for the law
\[
 \Pp_\theta(\sigma)
 =\frac{\theta^{c(\sigma)}}{\theta(\theta+1)\cdots(\theta+N-1)}\,,
 \qquad \sigma\in\mathcal S_N\,,
\]
where $c(\sigma)$ is the number of cycles.

\begin{lemma}
\label{lem:d2-ewens-rank}
Let $P$ and $Q$ be the permutation matrices of two independent
permutations of $[N]$, the first uniform and the second with law
$\operatorname{Ewens}_N(1/2)$.  Conditional on the second permutation
having no fixed point,
\[
 A=P(I+Q)
 \eqlabel{d2-ewens-representation}
\]
is uniform on $\cM_{N,2}$.  Moreover, under its unconditioned Ewens
law, $Q$ can be coupled to a uniform permutation matrix $U$ so that
\[
 \E\rank(Q-U)=O(\log N)\,.
 \eqlabel{d2-ewens-rank-bound}
\]
\end{lemma}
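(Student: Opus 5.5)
The plan has two parts: a combinatorial decomposition showing that the representation \eqref{d2-ewens-representation} produces the uniform law, and a Chinese-restaurant (Feller) coupling for \eqref{d2-ewens-rank-bound}.

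\emph{Uniformity.} A matrix $A\in\cM_{N,2}$ is the biadjacency matrix of a simple bipartite $2$-regular graph. Such a graph is a disjoint union of even cycles of lengths $2k\geq4$; the length $2$ is excluded because the entries are in $\{0,1\}$. Let $c(A)$ be the number of these cycles. I will show that decompositions $A=P+P'$ into permutation matrices with disjoint supports correspond bijectively to pairs $(P,Q)$ with $A=P(I+Q)$ and $Q=P^{-1}P'$ fixed-point free.
\begin{itemize}
\item Disjointness of the supports of $P$ and $P'$ is exactly the condition that $Q$ has no fixed point.
\item Each cycle component has exactly two splittings into two perfect matchings, so there are exactly $2^{c(A)}$ such pairs.
\item For each pair, the cycles of $Q$ are in bijection with the components: a component of length $2k$ gives a $k$-cycle of $Q$, obtained by following an edge of $P'$ and then an edge of $P$ backwards. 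Hence $c(Q)=c(A)$.
\end{itemize}
Under the product of the uniform law and $\operatorname{Ewens}_N(1/2)$, each pair has weight proportional to $2^{-c(Q)}=2^{-c(A)}$. Summing over the $2^{c(A)}$ pairs gives total weight independent of $A$. Conversely, $P(I+Q)$ with $Q$ fixed-point free always lies in $\cM_{N,2}$. This proves uniformity after conditioning.

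\emph{Coupling.} I will build both permutations by the Feller/Chinese-restaurant construction, using common randomness. At step $i$, element $i$ either opens a new cycle or is inserted immediately after some $j<i$, which sets $\sigma(j)=i$ and $\sigma(i)=\sigma_{\rm old}(j)$. For the uniform law the new-cycle probability is $1/i$, and for $\operatorname{Ewens}(1/2)$ it is $\frac{1/2}{1/2+i-1}$. In both cases the insertion point is uniform on $[i-1]$. Couple each step maximally, using a common uniform $j$ whenever both insert. The steps then disagree with probability at most $|1/i-\frac{1}{2i-1}|\leq C/i$, so the expected number of disagreeing steps is $O(\log N)$.

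\emph{Tracking mismatches.} Let $D_i=\{x:\sigma(x)\neq\sigma'(x)\}$ after step $i$. I will check three cases.
\begin{itemize}
\item At a step where both open a new cycle, $|D|$ does not change.
\item At a step where both insert after the same $j$, the new values $\sigma(j)=\sigma'(j)=i$ agree. The only possible new mismatch is at $i$, and it occurs only if $j$ was already mismatched. So the mismatch moves from $j$ to $i$, and $|D|$ does not increase.
\item At a disagreeing step, at most two positions change, namely $j$ and $i$. So $|D|$ grows by at most $2$.
\end{itemize}
Finally, $\rank(Q-U)\leq\#\{x:\sigma(x)\neq\sigma'(x)\}$, because the columns of $Q-U$ indexed by agreement points vanish. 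Therefore $\E\rank(Q-U)\leq2\,\E\#\{\text{disagreeing steps}\}=O(\log N)$.

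The step I expect to need the most care is this invariance of the mismatch count under agreeing insertions. One has to verify that the insertion rewiring only relocates an existing discrepancy and never duplicates it, including when $j$ is itself a recently inserted element. The bijection in the uniformity part is routine by comparison.
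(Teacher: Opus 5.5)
Your proof is correct and follows the paper's argument. Uniformity comes from the same count of $2^{c(A)}$ ordered decompositions with $c(Q)=c(A)$, and the bound comes from the same Chinese-restaurant coupling with monotonically coupled new-cycle decisions and a shared insertion point. The only difference is bookkeeping: the paper writes insertion after $j$ as right multiplication by the transposition $(j\ n)$ and tracks the Cayley distance, which equals $\rank(Q-U)$, is unchanged at agreeing steps and grows by at most one at disagreements; you track the mismatch set instead, which costs a harmless factor of two.
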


\begin{proof}
If $Q$ has no fixed point, the two permutation matrices $P$ and $PQ$
have disjoint supports.  Conversely, if $A\in\cM_{N,2}$ has $c(A)$
components as a two-regular bipartite graph, it has exactly
$2^{c(A)}$ ordered decompositions into two perfect matchings.  In every
decomposition $A=P+PQ$, the relative permutation $Q$ has $c(A)$
cycles.  The Ewens weight of each decomposition is proportional to
$2^{-c(A)}$, so summing over its $2^{c(A)}$ decompositions gives the
same total weight to every $A$.  This proves
\eqref{d2-ewens-representation}.

For the coupling, use the Chinese-restaurant construction
simultaneously for parameters $1/2$ and $1$.  At stage $n$, the
probabilities of starting a new cycle are respectively
\[
 \frac1{2n-1}\quad\hbox{and}\quad\frac1n\,.
\]
Couple these decisions monotonically and, if both constructions insert
$n$ into an old cycle, use the same uniformly chosen predecessor.
Insertion after $j$ is right multiplication by the transposition
$(j\ n)$.  Hence the Cayley distance increases by at most one at a
disagreement and is unchanged otherwise.  The expected number of
disagreements is at most $\sum_{n\leq N}n^{-1}=O(\log N)$, and a
single transposition changes a permutation matrix by rank one.  This
proves \eqref{d2-ewens-rank-bound}.
\end{proof}

\begin{proposition}
\label{prop:d2-mesoscopic-green}
Let $A$ be uniform on either $\cM_{N,2}$ or $\cM^0_{N,2}$.  For every
compact $\mathbb D\subset\C$, uniformly for $w\in\mathbb D$ and
$0<\eta\leq1$,
\[
 \E\,\im m_{N,w}(\ii\eta)
 \leq C_{\mathbb D}+\frac{C_{\mathbb D}}{N\eta^6}
       +\frac{C_{\mathbb D}\log N}{N\eta}\,.
 \eqlabel{d2-green-estimate}
\]
In particular, for $\eta_0=N^{-1/7}$,
\[
 \sup_{w\in\mathbb D}\E\,\im m_{N,w}(\ii\eta_0)
 =O_{\mathbb D}(1)\,.
 \eqlabel{d2-green-one-point}
\]
For every fixed $w$, $\nu_N^w$ also converges weakly in probability to
the free Hermitized law $\nu_2^w$.
\end{proposition}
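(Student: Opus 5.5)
We work first in the loops-allowed model on $\cM_{N,2}$ and use the representation \eqref{d2-ewens-representation}. Write $A=P(I+Q)$ with $P$ uniform and $Q$ Ewens$(1/2)$ conditioned to have no fixed point. The no-fixed-point event has probability bounded below; under Ewens$(1/2)$ the number of fixed points is asymptotically Poisson$(1/2)$. Since $\im m_{N,w}(\ii\eta)\geq0$, it therefore suffices to prove \eqref{d2-green-estimate} for $A'=P(I+Q)$ with $Q$ unconditioned Ewens$(1/2)$.

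Next, couple $Q$ to a uniform permutation $U$ as in \eqref{d2-ewens-rank-bound}, and put $A''=P(I+U)=P+PU$. The pair $(P,PU)$ consists of independent uniform permutation matrices. The Hermitizations of $A'-w$ and $A''-w$ differ by a matrix of rank at most $2\rank(Q-U)$. The standard rank inequality for Stieltjes transforms, $|m_1(\ii\eta)-m_2(\ii\eta)|\leq C\rank/(N\eta)$, then gives an error $C\,\E\rank(Q-U)/(N\eta)=O(\log N/(N\eta))$. This produces the last term in \eqref{d2-green-estimate}.

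For two independent uniform permutation matrices $P_1,P_2$, consider the polynomial $H=\begin{pmatrix}0&P_1+P_2-w\\ \ast&0\end{pmatrix}$. We apply the quantitative comparison theorem of Chen--Garza-Vargas--Tropp--van Handel \cite{CGVTH} to the smooth test function $f_\eta(s)=\eta/(s^2+\eta^2)$. Its $k$th derivatives have sup norm of order $\eta^{-k-1}$. The CGVTH expansion bounds $|\E\operatorname{tr}_N f(H)-(\operatorname{tr}_2\otimes\varphi)f(H_w)|$ by $C\|f\|_{C^{k}}/N$ for a fixed $k$ (derivatives up to order about $5$ suffice in their polynomial-method bound for degree-one polynomials). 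This yields the $C/(N\eta^6)$ term.

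Two points need care here. First, we must check that the trivial eigenvalue of $P_1+P_2$ on $\one$ contributes only $O(1/(N\eta))$, since it is a rank-one effect. Second, the constants must be uniform over compact $w$, which holds because $\|H\|\leq 2+|w|$. The free quantity $M_w(\eta)=\im$ of the limiting transform is bounded by Lemma~\ref{lem:d2-free-poisson}, giving the constant term.

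Taking $\eta_0=N^{-1/7}$ makes both error terms $o(1)$, which gives \eqref{d2-green-one-point}. For weak convergence of $\nu_N^w$, fix $\eta$ and apply the same comparison: it gives $\E m_{N,w}(z)\to$ the free Stieltjes transform for each $z$ in the upper half-plane. Concentration, for instance by a martingale/switching bound on the Lipschitz function $A\mapsto m_{N,w}(z)$ (each transposition is a rank-two change), gives variance $O(1/(N\eta)^2)$. Hence convergence holds in probability, and Stieltjes continuity gives weak convergence.

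Transfer from $\cM_{N,2}$ to $\cM^0_{N,2}$ uses \eqref{diagonal-positive} and nonnegativity for the expectation bound, and Lemma~\ref{lem:model-transfer}-type conditioning for the convergence in probability.

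The main obstacle is the precise form of the CGVTH input: verifying that their theorem applies to the Hermitized shifted sum with constants and smoothness order giving exactly $\eta^{-6}$. If their result is stated for polynomials in permutation matrices restricted to $\one^\perp$, we would first split off the $\one$ direction explicitly, as above.
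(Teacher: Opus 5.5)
Your proposal is correct and follows the paper's proof essentially step by step: the Ewens$(1/2)$ representation, with conditioning justified by nonnegativity of $h_\eta$; the $O(\log N)$ rank coupling; the CGVTH comparison giving the $\eta^{-6}$ term, with the constant-vector block split off; Lemma~\ref{lem:d2-free-poisson} for the limit; and a bounded-difference martingale for concentration. The only deviations are minor: writing $P(I+U)=P+PU$ with $(P,PU)$ an independent uniform pair lands directly on $u_1+u_2-w$, whereas the paper left-multiplies by $P^*$ and then needs the change of free generators $(v_2^*,v_1v_2^*)$; and Azuma over $N$ exposures with increments $O(1/(N\eta))$ gives variance $O(1/(N\eta^2))$ rather than your stated $O(1/(N\eta)^2)$, which is harmless for fixed $\eta$.
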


\begin{proof}
For an element $B$ of a unital $*$-algebra, put
\[
 H(B)=\begin{pmatrix}0&B\\B^*&0\end{pmatrix}\,,
 \qquad h_\eta(x)=\frac{\eta}{x^2+\eta^2}\,.
\]
For unitaries $x,y$, write $X_w(x,y)=H(1+x-wy)$.
The definition of $m_{N,w}$ gives
\[
 \im m_{N,w}(\ii\eta)
 =\frac1{2N}\operatorname{Tr}h_\eta(H(A-wI))\,.
\]
Let $U,R$ be independent uniform permutation matrices and put
\[
 X_w(U,R)=H(I+U-wR)=
 \begin{pmatrix}
 0&I+U-wR\\ I+U^*-\overline wR^*&0
 \end{pmatrix}\,.
 \eqlabel{d2-hermitian-polynomial}
\]
Let $(\mathcal M,\varphi)$ be a tracial $W^*$-probability space containing
free Haar unitaries $v_1,v_2$, corresponding to $U,R$.
We use the polynomial master inequality of
Chen--Garza-Vargas--Tropp--van Handel \cite[Theorem~6.1]{CGVTH}
and the Chebyshev argument proving \cite[Corollary~7.2]{CGVTH}.
The full group
$C^*$-norm required there, with $u_1,u_2$ denoting the canonical
generators of the full group $C^*$-algebra of the free group
$\mathbf F_2$, is
\[
 K_w=\|X_w(u_1,u_2)\|_{M_2(\C)\otimes C^*(\mathbf F_2)}=2+|w|:
\]
the triangle inequality gives the upper bound, while scalar unitaries
attain it.  For $h\in C^\infty(\R)$, set
\[
 f_w(\theta)=h(K_w\cos\theta)\,.
\]
Taking $\beta=2$, we obtain
\[
 \left|
 \E\frac1{2N}\operatorname{Tr} h
   (X_w(U,R)|_{\one^\perp\oplus\one^\perp})
 -(\operatorname{tr}_2\otimes\varphi)h(X_w(v_1,v_2))
 \right|
 \leq\frac{C_{\mathbb D}}{N}
       \bigl(\|f_w^{(5)}\|_{L^2[0,2\pi]}
             +\|h\|_{C^0[-K_w,K_w]}\bigr)\,.
 \eqlabel{d2-CGVTH}
\]
The normalization in \cite{CGVTH} is by $2N$, also on the
$2(N-1)$-dimensional restricted space.  To obtain
\eqref{d2-CGVTH} with this normalization, expand
$h(x)=\sum_{j\geq0}a_jT_j(x/K_w)$ in Chebyshev polynomials.
The constant term contributes exactly $-a_0/N$ to the difference
of traces.  For $j\geq1$, \cite[Theorem~6.1]{CGVTH}, with $m=1$,
bounds the corresponding difference by $C_{\mathbb D}j^4/N$.
Moreover, \cite[Lemma~4.4]{CGVTH} gives
\[
 |a_0|\leq\|h\|_{C^0[-K_w,K_w]}\,,
 \qquad
 \sum_{j\geq1}j^4|a_j|
 \leq C\|f_w^{(5)}\|_{L^2[0,2\pi]}\,.
\]
Summing these bounds proves \eqref{d2-CGVTH}.

Now take $h=h_\eta$ and write
$f_{\eta,w}(\theta)=h_\eta(K_w\cos\theta)$.  Since
$K_w\leq2+\sup_{z\in\mathbb D}|z|$ and
$\|h_\eta^{(j)}\|_\infty\leq C_j\eta^{-j-1}$ for $0\leq j\leq5$,
the chain rule gives
\[
 \|f_{\eta,w}^{(5)}\|_{L^2[0,2\pi]}
 \leq C_{\mathbb D}\eta^{-6}\,.
\]
Also $\|h_\eta\|_{C^0[-K_w,K_w]}=\eta^{-1}$, so the
normalization correction in \eqref{d2-CGVTH} is at most
$C_{\mathbb D}/(N\eta)$.
On the constant-vector block, \eqref{d2-hermitian-polynomial} has
eigenvalues $\pm|2-w|$, so its contribution to the full
normalized trace is
\[
 \frac{\eta}{N(|2-w|^2+\eta^2)}\leq\frac1{N\eta}\,.
\]
Since
\[
 (1+v_1-wv_2)v_2^*=v_2^*+v_1v_2^*-w
\]
and $(v_2^*,v_1v_2^*)$ is a free Haar pair, right multiplication by
$v_2^*$ identifies its Hermitized law with $\nu_2^w$.
Lemma~\ref{lem:d2-free-poisson} therefore shows
that the limiting term is bounded.  We have proved
\[
 \E\frac1{2N}\operatorname{Tr} h_\eta(X_w(U,R))
 \leq C_{\mathbb D}+\frac{C_{\mathbb D}}{N\eta^6}
       +\frac{C_{\mathbb D}}{N\eta}\,.
\]

Now let $P$ be uniform and let
$Q\sim\operatorname{Ewens}_N(1/2)$ have its unconditioned law,
independently of $P$, and put $\widetilde A=P(I+Q)$.  Since left
multiplication preserves singular values and $R=P^*$ is uniform and
independent of $Q$,
\[
 \tau_j(\widetilde A-wI)=\tau_j(I+Q-wR)\,,
 \qquad 1\leq j\leq N\,.
\]
Couple $Q$ to a uniform $U$, independently of $R$, and put
\[
 H_Q=X_w(Q,R)\,,\qquad H_U=X_w(U,R)\,.
\]
The two matrices differ by rank at most $2\rank(Q-U)$.  The rank
inequality for Hermitian empirical distributions gives
\[
 \left|\frac1{2N}\operatorname{Tr} h_\eta(H_Q)
       -\frac1{2N}\operatorname{Tr} h_\eta(H_U)\right|
 \leq\frac{2\rank(Q-U)}{N\eta}\,.
\]
Taking expectations and using \eqref{d2-ewens-rank-bound} proves
\eqref{d2-green-estimate} for the unconditioned matrix
$\widetilde A$.  Under the Ewens-$1/2$ law the probability of no fixed
point tends to $e^{-1/2}>0$.  Since $h_\eta\geq0$,
conditioning changes the upper bound by at most a fixed factor, and
Lemma~\ref{lem:d2-ewens-rank} identifies the resulting conditional
matrix with a uniform member of $\cM_{N,2}$.  A further conditioning,
using \eqref{diagonal-positive}, proves the estimate for
$\cM^0_{N,2}$.

For weak convergence, take a fixed $h\in C_c^\infty(\R)$ in
\eqref{d2-CGVTH}.  The comparison error is $O_{\mathbb D,h}(N^{-1})$;
adding the constant-vector block contributes at most
$\|h\|_\infty/N$ and gives the same order of error for the full trace.
Composing either permutation by a
transposition changes the Hermitization by rank at most two, and hence
changes the normalized trace by at most
$2\operatorname{TV}(h)/N$, where $\operatorname{TV}(h)$ denotes total
variation.  The permutation-exposure martingale and
Azuma's inequality therefore give concentration about the expectation.
The same rank inequality and \eqref{d2-ewens-rank-bound} show that the
Ewens coupling changes a normalized smooth trace by $o_{\Pp}(1)$, and
both conditioning events have probabilities bounded away from zero.
Uniform approximation on the deterministic spectral interval
$[-2-|w|,2+|w|]$ gives weak convergence for every continuous test
function.  The limit is $\nu_2^w$ by the change of free generators above.
\end{proof}

Thus, for every $d\geq2$ and every fixed $w$, $\nu_N^w$ converges
weakly in probability to a deterministic symmetric measure $\nu_d^w$:
this is Theorem~1.3 of \cite{AdhikariDembo} when $d\geq3$ and
Proposition~\ref{prop:d2-mesoscopic-green} when $d=2$.

Since $\|A-wI\|\leq d+|w|$, weak convergence applies to every fixed
truncation of the logarithm.  Thus, for fixed $T>0$,
\[
 \frac1N\sum_{j=1}^N\max(\log\tau_j(A-wI),-T)
 \longrightarrow
 V_{d,T}(w):=\int\max(\log|s|,-T)\,\dd\nu_d^w(s)
 \eqlabel{trunc-conv}
\]
in probability.  Put
\[
 U_d(w)=\int\log|s|\,\dd\nu_d^w(s)\,.
\]
For $d\geq3$, we apply Lemma~A.1 of \cite{AdhikariDembo} and the
Poisson-kernel bound.  For $d=2$, we apply
Lemma~\ref{lem:d2-free-poisson}.  In both cases,
for every compact $\mathbb D\subset\C$, uniformly for
$w\in\mathbb D$ and $0<y<1$,
\[
 \nu_d^w((-y,y))\leq C_{d,\mathbb D}y\,.
\]
Consequently,
\[
 \begin{split}
 0\leq V_{d,T}(w)-U_d(w)
 =\int_T^\infty\nu_d^w((-e^{-u},e^{-u}))\,\dd u\leq C_{d,\mathbb D}e^{-T}\,.
 \end{split}
 \eqlabel{limit-untruncation}
\]
The limiting potential, calculated for the free sum of $d$ Haar
unitaries in \cite{BasakDembo} and identified with the tree limit in
\cite{AdhikariDembo}, is radial.  With $r=|w|$,
\[
 U_d(r)=
 \begin{cases}
 \displaystyle
 \frac12\log d-\frac{d-1}{2}
 \log\left(\frac{d^2-r^2}{d^2-d}\right)\,,
      &0\leq r\leq\sqrt d\,,\\[2mm]
 \log r\,,&r\geq\sqrt d\,.
 \end{cases}
 \eqlabel{limit-potential}
\]

\subsection{Logarithmic uniform integrability}

For $T>0$, define
\[
 R_{N,T}(w)=\frac1N\sum_{j=1}^N
 \left(\log_+\frac1{\tau_j(A-wI)}-T\right)_+\,,
\]
with value $+\infty$ if $A-wI$ is singular.

\begin{proposition}
\label{prop:UI}
Let $\mathbb D\subset\C$ be compact and let
$\varnothing\neq\mathcal W\subset
\mathbb D\cap(\Q(i)\setminus\Z[i])$ be finite.  For every fixed
$\delta>0$,
\[
 \lim_{T\to\infty}\limsup_{N\to\infty}
 \Pp\left\{\max_{w\in\mathcal W}R_{N,T}(w)>\delta\right\}=0\,.
 \eqlabel{UI}
\]
More precisely, if $(b_w)_{w\in\mathcal W}$ are nonnegative fixed
weights, then, for every fixed $T,\delta>0$,
\[
 \limsup_{N\to\infty}
 \Pp\left\{\sum_{w\in\mathcal W}b_wR_{N,T}(w)>\delta\right\}
 \leq\frac{C_{d,\mathbb D}e^{-T}}{\delta}
       \sum_{w\in\mathcal W}b_w\,.
 \eqlabel{weighted-UI}
\]
\end{proposition}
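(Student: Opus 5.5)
The plan is to use the three-window decomposition from the introduction at each grid point $w\in\mathcal W$. First note that \eqref{UI} follows from \eqref{weighted-UI}. Take all $b_w=1$ and observe that $\max_w R_{N,T}(w)\leq\sum_w R_{N,T}(w)$. Then the bound $C_{d,\mathbb D}e^{-T}|\mathcal W|/\delta$ tends to zero as $T\to\infty$. So it suffices to prove \eqref{weighted-UI}. Since $\mathcal W$ is finite, the weights are fixed, and $\Pp\{\sum_w b_wR_w>\delta\}$ can be handled by splitting $R_{N,T}(w)$ into a main part and an $o_{\Pp}(1)$ part, it suffices to produce a decomposition
\[
 R_{N,T}(w)\leq Y_{N,T}(w)+Z_N(w)
\]
with $Z_N(w)=o_{\Pp}(1)$ and $\limsup_N\E\,[Y_{N,T}(w)\mathbf 1_{G_N}]\leq C_{d,\mathbb D}e^{-T}$ on events $G_N$ with $\Pp(G_N)\to1$. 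Markov's inequality applied to $\sum_w b_wY$ then gives \eqref{weighted-UI}, after sacrificing an arbitrarily small portion $\delta'$ of $\delta$ to the $Z$ terms and letting $\delta'\downarrow0$ by monotonicity.

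Fix $w\in\mathcal W$. Since $w\in\Q(i)\setminus\Z[i]$, we have $w\notin\{0,d\}$, and $A-wI$ is nonsingular (its determinant is a nonzero multiple of $q^{-N}$ times a Gaussian integer, nonzero since $w$ is not an algebraic integer eigenvalue). So $R_{N,T}$ is finite. Let $K=K(d,w)$ be as in Theorem~\ref{thm:polynomial-count}, and take the maximum over the finitely many $w$. For $N$ large, so that $T<K\log N$, the identity \eqref{intro-three-window} holds:
\[
 R_{N,T}(w)=\int_T^{K\log N}F_{N,w}(e^{-u})\,\dd u+Z_N(w),\qquad
 0\leq Z_N(w)\leq\frac{n_{N^{-K}}(A-wI)}N\log_+\frac1{s_{\min}(A-wI)}\,.
\]
Theorem~\ref{thm:polynomial-count} gives $n_{N^{-K}}=O_{\Pp}(1)$. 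Theorem~\ref{thm:small}, applied for every fixed $c>0$ and transferred to $\cM^0_{N,d}$, gives $N^{-1}\log_+(1/s_{\min})\to0$ in probability. Hence $Z_N(w)=o_{\Pp}(1)$.

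For the integral term with $d\geq3$, use \eqref{AD-count} on its high-probability event $G_N$, which is uniform over $w\in\mathbb D$. There the integrand is at most $C(e^{-u}\vee N^{-\beta_{\rm AD}})$. The integral is then bounded by $Ce^{-T}+CK N^{-\beta_{\rm AD}}\log N$, and the second term is $o(1)$ deterministically. For $d=2$, I would split the range at $u_0=\frac17\log N$. For $u\leq u_0$, use the Poisson-kernel bound $F_{N,w}(y)\leq2y\,\im m_{N,w}(\ii y)$ together with \eqref{d2-green-estimate} at $\eta=y=e^{-u}\geq N^{-1/7}$. This gives
\[
 \E F_{N,w}(e^{-u})\leq C e^{-u}+\frac{C}{N e^{-5u}}+\frac{C\log N}{N}.
\]
Integrating over $[T,u_0]$ yields $Ce^{-T}+O(N^{-2/7})+O(\log^2N/N)$. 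For $u\in[u_0,K\log N]$, use monotonicity: $F_{N,w}(e^{-u})\leq F_{N,w}(N^{-1/7})\leq 2N^{-1/7}\im m_{N,w}(\ii N^{-1/7})$, whose expectation is $O(N^{-1/7})$ by \eqref{d2-green-one-point}. The integral over a range of length $O(\log N)$ is therefore $o(1)$ in expectation. In both cases this gives the required $Y_{N,T}$ bound.

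The main obstacle is the hard-edge piece $Z_N$, which rests entirely on Theorems~\ref{thm:small} and~\ref{thm:polynomial-count}. The remaining care is bookkeeping. The constants in the integral bound must be uniform in $w\in\mathbb D$ so that the factor is $C_{d,\mathbb D}$. Also, for $d=2$ only an expectation bound is available, not a high-probability one, which is why the statement is phrased through Markov's inequality with the explicit $e^{-T}/\delta$ factor.
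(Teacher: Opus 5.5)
Your proposal is correct and follows essentially the same route as the paper. The paper also uses the layer-cake split at $K\log N$, bounds the hard-edge tail by $n_{N^{-K}}(A-wI)\,N^{-1}\log_+(1/s_{\min}(A-wI))$ via Theorems~\ref{thm:small} and~\ref{thm:polynomial-count}, applies \eqref{AD-count} for $d\geq3$, and for $d=2$ splits at $u_0=\frac17\log N$ using Proposition~\ref{prop:d2-mesoscopic-green} and Markov's inequality. The only cosmetic difference is in the middle window $[u_0,K\log N]$: you use monotonicity of $F_{N,w}$ and bound it in expectation, while the paper uses Poisson-kernel comparison and an $o_{\Pp}(1)$ bound; both yield the same per-unit-length bound $2\eta_0\,\im m_{N,w}(\ii\eta_0)$.
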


\begin{proof}
Every $A-wI$ is nonsingular for $w\in\mathcal W$.  Indeed, an
eigenvalue of the integer matrix $A$ is an algebraic integer, whereas
the algebraic integers in $\Q(i)$ are precisely $\Z[i]$.

Choose $K>1/7$ larger than the finitely many exponents in
Theorem~\ref{thm:polynomial-count}, one for each $w\in\mathcal W$.
Then
\[
 \max_{w\in\mathcal W}n_{N^{-K}}(A-wI)=O_{\Pp}(1)\,.
 \eqlabel{grid-polynomial-count}
\]

Put
\[
 X_{N,w}=\frac1N\log_+\frac1{s_{\min}(A-wI)}\,.
\]
Since $\mathcal W\subset\Q(i)\setminus\Z[i]$, every point satisfies the
hypotheses of Theorem~\ref{thm:small}.  A union bound therefore gives
\[
 \max_{w\in\mathcal W}X_{N,w}=o_{\Pp}(1)\,.
 \eqlabel{grid-endpoint-depth}
\]

Set $h_N=K\log N$.  The layer-cake identity gives
\[
 R_{N,T}(w)=\int_T^\infty F_{N,w}(e^{-u})\,\dd u\,.
 \eqlabel{layer-cake}
\]
Suppose first that $d\geq3$.  On the event \eqref{AD-count}, uniformly
over the finite set $\mathcal W$,
\[
 \int_T^{h_N}F_{N,w}(e^{-u})\,\dd u
 \leq C_{d,\mathbb D}e^{-T}
       +C_{d,\mathbb D}K(\log N)N^{-\beta_{\rm AD}}
 =C_{d,\mathbb D}e^{-T}+o(1)\,.
 \eqlabel{polynomial-layer}
\]
For $x_j(w)=\log_+\tau_j(A-wI)^{-1}$, the remaining integral satisfies
\[
 \begin{split}
 \int_{h_N}^\infty F_{N,w}(e^{-u})\,\dd u
 =\frac1N\sum_{j=1}^N(x_j(w)-h_N)_+\leq n_{N^{-K}}(A-wI)X_{N,w}\,.
 \end{split}
 \eqlabel{terminal-layer}
\]
Equations \eqref{grid-polynomial-count} and
\eqref{grid-endpoint-depth} show that the last expression is $o_{\Pp}(1)$,
uniformly over the finite set.  Combining
\eqref{polynomial-layer} and \eqref{terminal-layer} proves
\eqref{weighted-UI}, after increasing $C_{d,\mathbb D}$ if necessary,
for $d\geq3$.

It remains to treat $d=2$.  Put
\[
 \eta_0=N^{-1/7}\,,\qquad u_0=\frac17\log N\,,
\]
and, for $N$ large enough that $T<u_0<h_N$, split
\eqref{layer-cake} over
\[
 [T,u_0]\,,\qquad[u_0,h_N]\,,\qquad[h_N,\infty)\,,
\]
denoting the three terms by $I_1(w),I_2(w),I_3(w)$.  The Poisson-kernel
bound is
\[
 F_{N,w}(y)\leq2y\,\im m_{N,w}(\ii y)\,.
 \eqlabel{d2-poisson-count}
\]
The constants in Proposition~\ref{prop:d2-mesoscopic-green} are
uniform for $w\in\mathbb D$ by \eqref{d2-free-cubic} and the
uniformity of \eqref{d2-CGVTH} on compact sets.  That proposition,
Tonelli's theorem, and the change
of variables $y=e^{-u}$ give
\[
 \begin{split}
 \E I_1(w)
 &\leq2\int_{\eta_0}^{e^{-T}}
       \E\,\im m_{N,w}(\ii y)\,\dd y\\
 &\leq C_{\mathbb D}e^{-T}
       +\frac{C_{\mathbb D}}{N\eta_0^5}
       +\frac{C_{\mathbb D}(\log N)^2}{N}
 =C_{\mathbb D}e^{-T}+o(1)\,.
 \end{split}
 \eqlabel{d2-upper-window}
\]
For $0<y\leq\eta_0$, comparison of the Poisson kernels gives
\[
 \im m_{N,w}(\ii y)
 \leq\frac{\eta_0}{y}\im m_{N,w}(\ii\eta_0)\,.
\]
Thus \eqref{d2-poisson-count} and
\eqref{d2-green-one-point} imply
\[
 \begin{split}
 \max_{w\in\mathcal W}I_2(w)
 &\leq2K(\log N)\eta_0
       \max_{w\in\mathcal W}\im m_{N,w}(\ii\eta_0)\\
 &=o_{\Pp}(1)\,.
 \end{split}
\]
Finally, \eqref{grid-polynomial-count} and
\eqref{grid-endpoint-depth} give
$\max_{w\in\mathcal W}I_3(w)=o_{\Pp}(1)$.  Multiply
\eqref{d2-upper-window} by $b_w$, sum over $w$, and apply Markov's
inequality.  The second and third terms are $o_{\Pp}(1)$ for the
fixed finite set, so
\[
 \limsup_{N\to\infty}
 \Pp\left\{\sum_{w\in\mathcal W}b_wR_{N,T}(w)>\delta\right\}
 \leq\frac{C_{2,\mathbb D}e^{-T}}{\delta}
       \sum_{w\in\mathcal W}b_w\,.
\]
This is \eqref{weighted-UI}.  Taking $b_w=1$ for every $w$ and then
letting $T\to\infty$ proves \eqref{UI}.
\end{proof}

\subsection{The finite-grid argument}

For an $N\times N$ matrix $B$ with eigenvalues
$\zeta_1,\ldots,\zeta_N$, counted with algebraic multiplicity, put
\[
 L_B=\frac1N\sum_{j=1}^N\delta_{\zeta_j}\,,
 \qquad
 U_B(w)=\frac1N\log|\det(B-wI)|
\]
and
\[
 V_{B,T}(w)=\frac1N\sum_{j=1}^N
       \max(\log\tau_j(B-wI),-T)\,.
\]

\begin{lemma}\label{lem:grid}
Let $T,h>0$, let $f\in C_c^2(\C)$ be real-valued, and put
$g=(2\pi)^{-1}\Delta f$.  Let $C_\alpha$ be the finitely many
half-open squares of a square grid which meet $\operatorname{supp}g$,
each of diameter at most $h$.  Choose $w_\alpha\in C_\alpha$ and set
\[
 a_\alpha=\int_{C_\alpha}g(w)\,\dd^2w\,.
 \eqlabel{grid-weights}
\]
If $B-w_\alpha I$ is nonsingular for every $\alpha$, then
\begin{equation}  \label{grid-bound}
 \Big|\int f\,\dd L_B-
       \sum_\alpha a_\alpha V_{B,T}(w_\alpha)\Big|
 \leq\frac14\|\Delta f\|_\infty e^{-2T}
       +\frac{he^T}{2\pi}\|\Delta f\|_1+\sum_\alpha|a_\alpha|
       \bigl(V_{B,T}(w_\alpha)-U_B(w_\alpha)\bigr)\,.
\end{equation}

\end{lemma}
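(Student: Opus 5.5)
The plan is to pass through an eigenvalue-side truncation of the logarithmic potential. This keeps the dependence on the shift explicit, and we only compare with singular values at the grid nodes $w_\alpha$.

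\emph{Step 1: Girko's identity.} Since $\Delta\log|\zeta-w|=2\pi\delta_\zeta$ in the distributional sense, integration by parts gives
\[
 \int f\,\dd L_B=\int_{\C} g(w)\,U_B(w)\,\dd^2w\,,
 \qquad
 U_B(w)=\frac1N\sum_{j=1}^N\log|\zeta_j-w|\,.
\]
For each eigenvalue separately, define the eigenvalue-side truncation
\[
 U_B^T(w)=\frac1N\sum_{j=1}^N\max(\log|\zeta_j-w|,-T)\,.
\]

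\emph{Step 2: truncation error.} The difference $U_B^T-U_B$ is a sum over $j$ of the nonnegative functions $(-T-\log|\zeta_j-w|)_+$. Each is supported on the disk $|w-\zeta_j|<e^{-T}$. Using polar coordinates,
\[
 \int(-T-\log|x|)_+\,\dd^2x=2\pi\int_0^{e^{-T}}r(-T-\log r)\,\dd r=\frac{\pi}{2}e^{-2T}\,.
\]
Hence
\[
 \Big|\int g\,(U_B^T-U_B)\Big|\leq\|g\|_\infty\frac{\pi}{2}e^{-2T}=\frac14\|\Delta f\|_\infty e^{-2T}\,.
\]

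\emph{Step 3: discretization error.} Each map $w\mapsto\max(\log|\zeta_j-w|,-T)$ is $e^T$-Lipschitz, since the gradient of $\log|x|$ has size $1/|x|\leq e^T$ on the region where the truncation is inactive. Therefore $|U_B^T(w)-U_B^T(w_\alpha)|\leq he^T$ for every $w\in C_\alpha$. Summing over the squares gives
\[
 \Big|\int gU_B^T-\sum_\alpha a_\alpha U_B^T(w_\alpha)\Big|\leq he^T\|g\|_1=\frac{he^T}{2\pi}\|\Delta f\|_1\,.
\]
This uses that the squares cover $\operatorname{supp}g$.

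\emph{Step 4: passing to singular values at the nodes.} It remains to prove
\[
 |U_B^T(w_\alpha)-V_{B,T}(w_\alpha)|\leq V_{B,T}(w_\alpha)-U_B(w_\alpha)\,.
\]
Multiplying by $|a_\alpha|$ and summing then gives the last term of \eqref{grid-bound}. By nonsingularity, all quantities are finite. Set $\phi(x)=(-T-x)_+$, which is convex. Then
\[
 U_B^T-U_B=\frac1N\sum_j\phi(\log|\lambda_j|)\,,
 \qquad
 V_{B,T}-U_B=\frac1N\sum_j\phi(\log\tau_j)\,,
\]
where $\lambda_j,\tau_j$ are the eigenvalues and singular values of $B-w_\alpha I$. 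The second identity uses $\sum_j\log\tau_j=\log|\det|=\sum_j\log|\lambda_j|$.

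Weyl's multiplicative inequalities say that, for every $k$, the product of the $k$ smallest $|\lambda_j|$ is at least the product of the $k$ smallest $\tau_j$. Combined with equality of the total products, this says that the vector $(\log\tau_j)$ majorizes $(\log|\lambda_j|)$. Karamata's inequality for the convex function $\phi$ then gives
\[
 0\leq U_B^T-U_B\leq V_{B,T}-U_B\,.
\]
Consequently $0\leq V_{B,T}-U_B^T\leq V_{B,T}-U_B$, which is the required estimate.

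The main obstacle is Step 4: it needs the correct orientation of Weyl's majorization, applied from the small end, together with the convexity of $\phi$. The remaining steps are explicit calculus, and the triangle inequality combines Steps 2–4 into \eqref{grid-bound}.
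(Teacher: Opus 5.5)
Your proposal is correct and follows the paper's proof essentially step for step. It uses the same three-term decomposition through the eigenvalue-side truncation $\widetilde V_{B,T}$ (your $U_B^T$), the same polar-coordinate bound $\frac{\pi}{2}e^{-2T}\|g\|_\infty$, the same $e^T$-Lipschitz discretization, and the same multiplicative Weyl majorization at the nodes. The only cosmetic difference is that you apply Karamata to $\phi(x)=(-T-x)_+$ rather than to $x\mapsto\max(x,-T)$; these are equivalent because $\sum_j\log\tau_j=\sum_j\log|\lambda_j|$.
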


\begin{proof}
Put
\[
 \ell_T(z)=\max(\log|z|,-T)\,,
 \qquad
 \widetilde V_{B,T}(w)=\frac1N\sum_{j=1}^N\ell_T(\zeta_j-w)\,.
\]
Girko's distributional identity \cite{Girko} gives
\[
 \begin{gathered}
 \int f\,\dd L_B-
 \sum_\alpha a_\alpha V_{B,T}(w_\alpha)
 =\int g(w)(U_B(w)-\widetilde V_{B,T}(w))\,\dd^2w\\
 {}+\sum_\alpha\int_{C_\alpha}g(w)
 (\widetilde V_{B,T}(w)-\widetilde V_{B,T}(w_\alpha))\,\dd^2w+\sum_\alpha a_\alpha
 (\widetilde V_{B,T}(w_\alpha)-V_{B,T}(w_\alpha))\,.
 \end{gathered}
\]
The first term has absolute value at most
\[
 \|g\|_\infty
 \int_{|z|<e^{-T}}(-\log|z|-T)\,\dd^2z
 =\frac14\|\Delta f\|_\infty e^{-2T}\,.
\]
Since $\ell_T$ is $e^T$-Lipschitz, the second term has absolute value
at most
\[
 he^T\|g\|_1=\frac{he^T}{2\pi}\|\Delta f\|_1\,.
\]
Finally, multiplicative Weyl majorization and the convex function
$x\mapsto\max(x,-T)$ give
\[
 0\leq V_{B,T}(w_\alpha)-\widetilde V_{B,T}(w_\alpha)
 \leq V_{B,T}(w_\alpha)-U_B(w_\alpha)\,.
\]
These three estimates prove \eqref{grid-bound}.
\end{proof}

\begin{proof}[Proof of Theorem~\ref{thm:okm}]
Fix a real-valued $f\in C_c^2(\C)$ and put
$g=(2\pi)^{-1}\Delta f$.  If $g=0$, then $f=0$ and there is nothing
to prove.  Choose a compact set $\mathbb D$ containing
the closed unit neighborhood of $\operatorname{supp}g$.  Fix $T>0$
and $0<h\leq1$, and form the grid in Lemma~\ref{lem:grid}.  In each
of these cells choose
\[
 w_\alpha\in\Q(i)\setminus\Z[i]\,.
\]
Such a point exists because $\Q(i)\setminus\Z[i]$ is dense.  Every
chosen point belongs to $\mathbb D$.  Moreover, $A-w_\alpha I$ is nonsingular for every
integer matrix $A$, as shown in the proof
of Proposition~\ref{prop:UI}.

Let $\mathcal W=\{w_\alpha\}$.  Its elements are distinct because the
cells are half-open.  Since $V_{A,T}(w)-U_A(w)=R_{N,T}(w)$ and
$\sum_\alpha|a_\alpha|\leq(2\pi)^{-1}\|\Delta f\|_1$,
\eqref{weighted-UI} with $b_{w_\alpha}=|a_\alpha|$ gives
\[
 \limsup_{N\to\infty}\Pp\left\{
 \sum_\alpha|a_\alpha|
 \bigl(V_{A,T}(w_\alpha)-U_A(w_\alpha)\bigr)>\delta\right\}
 \leq \frac{C_{d,\mathbb D}\|\Delta f\|_1e^{-T}}
 {2\pi\delta}
 \eqlabel{grid-tail}
\]
for every fixed $\delta>0$.
For fixed $T,h$ and the corresponding finite grid,
\eqref{trunc-conv} gives
\[
 \sum_\alpha a_\alpha V_{A,T}(w_\alpha)
 \longrightarrow
 \sum_\alpha a_\alpha V_{d,T}(w_\alpha)
 \eqlabel{grid-limit}
\]
in probability.

Let $\omega_{U_d,\mathbb D}$ be the modulus of continuity of $U_d$ on
$\mathbb D$.  Equations \eqref{limit-untruncation} and
\eqref{grid-weights} yield
\[
 \begin{split}
 \left|\sum_\alpha a_\alpha V_{d,T}(w_\alpha)
       -\int g(w)U_d(w)\,\dd^2w\right|
 &\leq\frac{C_{d,\mathbb D}}{2\pi}
       \|\Delta f\|_1e^{-T}+\omega_{U_d,\mathbb D}(h)\|g\|_1\,.
 \end{split}
 \eqlabel{limit-grid-error}
\]

The two formulas in \eqref{limit-potential} have the same value and
the same radial derivative at $r=\sqrt d$.  Thus $U_d$ is $C^1$ across
the circle, and direct differentiation gives
\[
 \frac1{2\pi}\Delta U_d(z)
 =\frac{d^2(d-1)}{\pi(d^2-|z|^2)^2}
   \mathbf1_{\{|z|<\sqrt d\}}=h_d(z)\,.
\]
In particular, there is no boundary mass on $|z|=\sqrt d$, and
\[
 \int g(w)U_d(w)\,\dd^2w=\int f\,\dd\mu_d\,.
 \eqlabel{limit-id}
\]

Put
\[
 \mathcal R_{N,T,h}
 =\sum_\alpha|a_\alpha|
 \bigl(V_{A,T}(w_\alpha)-U_A(w_\alpha)\bigr)\,.
\]
Combining \eqref{grid-bound}, \eqref{grid-limit},
\eqref{limit-grid-error}, and \eqref{limit-id} shows that
{\medmuskip=1.5mu \thickmuskip=3mu
\[
 \begin{split}
 \left|\int f\,\dd L_A-\int f\,\dd\mu_d\right|
 \leq\tfrac14\|\Delta f\|_\infty e^{-2T}
       +\tfrac{he^T}{2\pi}\|\Delta f\|_1+\tfrac{C_{d,\mathbb D}}{2\pi}
       \|\Delta f\|_1e^{-T}
       +\omega_{U_d,\mathbb D}(h)\|g\|_1
       +\mathcal R_{N,T,h}
       +o_{\Pp}(1)\,.
 \end{split}
\]
}
Given $\varepsilon>0$, first choose $T$ large enough that the terms
containing $e^{-T}$ are small and, by \eqref{grid-tail}, the limiting
probability that $\mathcal R_{N,T,h}>\varepsilon$ is small, uniformly
in the subsequently chosen grid.  Next choose $h$ so that $he^T$ and
$\omega_{U_d,\mathbb D}(h)$ are sufficiently small.  Keep the
resulting finite grid fixed and let $N\to\infty$.  We obtain
\[
 \int f\,\dd L_A\longrightarrow\int f\,\dd\mu_d
\]
in probability.  Since every eigenvalue of $A$ lies in
$\{|z|\leq d\}$, the empirical measures are deterministically tight.
Choose a countable dense convergence-determining subclass of
$C_c^2(\C)$.  The preceding convergence for each member, together with
deterministic tightness, is equivalent to weak convergence in
probability, and the theorem follows.
\end{proof}


{\small
\begin{thebibliography}{99}
\raggedright

\bibitem{AdhikariDembo}
A.~Adhikari and A.~Dembo,
\emph{Spectral measure for uniform $d$-regular digraphs},
arXiv:2310.14132, 2025.

\bibitem{Bai97}
Z.~D.~Bai, \emph{Circular law}, Ann. Probab. \textbf{25} (1997), 494--529.

\bibitem{BR19}
A.~Basak and M.~Rudelson, \emph{The circular law for sparse non-Hermitian matrices}, Ann. Probab. \textbf{47} (2019),
2359--2416.

\bibitem{BasakCookZeitouni}
A.~Basak, N.~Cook, and O.~Zeitouni,
\emph{Circular law for the sum of random permutation matrices},
Electron. J. Probab. \textbf{23} (2018), paper no.~33, 1--51.

\bibitem{BasakDembo}
A.~Basak and A.~Dembo,
\emph{Limiting spectral distribution of sums of unitary and orthogonal
matrices},
Electron. Commun. Probab. \textbf{18} (2013), no.~69, 1--19.

\bibitem{BordenaveChafai}
C.~Bordenave and D.~Chafa\"{\i},
\emph{Around the circular law},
Probab. Surv. \textbf{9} (2012), 1--89.

\bibitem{BordenaveCollins}
C.~Bordenave and B.~Collins,
\emph{Eigenvalues of random lifts and polynomials of random permutation
matrices},
Ann. of Math. (2) \textbf{190} (2019), no.~3, 811--875.

\bibitem{CGVTH}
C.-F.~Chen, J.~Garza-Vargas, J.~A.~Tropp, and R.~van Handel,
\emph{A new approach to strong convergence},
Ann. of Math. (2) \textbf{203} (2026), 555--602.

\bibitem{CookCircular}
N.~A. Cook,
\emph{The circular law for random regular digraphs},
Ann. Inst. H. Poincar\'e Probab. Statist. \textbf{55} (2019), no.~4,
2111--2167.

\bibitem{CookSingularity}
N.~A. Cook,
\emph{On the singularity of adjacency matrices for random regular
digraphs},
Probab. Theory Related Fields \textbf{167} (2017), no.~1--2, 143--200.

\bibitem{Coste}
S.~Coste,
\emph{The spectral gap of sparse random digraphs},
Ann. Inst. H. Poincar\'e Probab. Statist. \textbf{57} (2021), no.~2,
644--684.

\bibitem{CosteLambertZhu}
S.~Coste, G.~Lambert, and Y.~Zhu,
\emph{The characteristic polynomial of sums of random permutations and
regular digraphs},
Int. Math. Res. Not. IMRN (2024), no.~3, 2461--2510.

\bibitem{ErdosSimonovits}
P.~Erd\H{o}s and M.~Simonovits,
\emph{Supersaturated graphs and hypergraphs},
Combinatorica \textbf{3} (1983), no.~2, 181--192.

\bibitem{Girko}
V.~L.~Girko,
\emph{Circular law},
Theory Probab. Appl. \textbf{29} (1985), no.~4, 694--706.

\bibitem{GT10}
F.~G\"{o}tze and A.~Tikhomirov, \emph{The circular law for random matrices}, Ann. Probab. \textbf{38} (2010), 1444--1491.

\bibitem{HaagerupLarsen}
U.~Haagerup and F.~Larsen,
\emph{Brown's spectral distribution measure for $R$-diagonal elements in
finite von Neumann algebras},
J. Funct. Anal. \textbf{176} (2000), no.~2, 331--367.

\bibitem{HuangInvertibility}
J.~Huang,
\emph{Invertibility of adjacency matrices for random $d$-regular graphs},
Duke Math. J. \textbf{170} (2021), no.~18, 3977--4032.

\bibitem{Kesten}
H.~Kesten,
\emph{Symmetric random walks on groups},
Trans. Amer. Math. Soc. \textbf{92} (1959), no.~2, 336--354.

\bibitem{LLTTYCircular}
A.~E. Litvak, A.~Lytova, K.~Tikhomirov, N.~Tomczak-Jaegermann, and
P.~Youssef,
\emph{Circular law for sparse random regular digraphs},
J. Eur. Math. Soc. (JEMS) \textbf{23} (2021), no.~2, 467--501.

\bibitem{LLTTYShifted}
A.~E. Litvak, A.~Lytova, K.~Tikhomirov, N.~Tomczak-Jaegermann, and
P.~Youssef,
\emph{The smallest singular value of a shifted $d$-regular random square
matrix},
Probab. Theory Related Fields \textbf{173} (2019), no.~3--4, 1301--1347.

\bibitem{LLTTYStructure}
A.~E. Litvak, A.~Lytova, K.~Tikhomirov, N.~Tomczak-Jaegermann, and
P.~Youssef,
\emph{Structure of eigenvectors of random regular digraphs},
Trans. Amer. Math. Soc. \textbf{371} (2019), no.~11, 8097--8172.

\bibitem{McKay}
B.~D. McKay,
\emph{The expected eigenvalue distribution of a large regular graph},
Linear Algebra Appl. \textbf{40} (1981), 203--216.

\bibitem{Meszaros}
A.~M\'esz\'aros,
\emph{The distribution of sandpile groups of random regular graphs},
Trans. Amer. Math. Soc. \textbf{373} (2020), 6529--6594.

\bibitem{MRRW}
M.~S. O. Molloy, H.~Robalewska, R.~W. Robinson, and N.~C. Wormald,
\emph{1-factorizations of random regular graphs},
Random Structures Algorithms \textbf{10} (1997), 305--321.

\bibitem{NguyenPan}
H.~H. Nguyen and A.~Pan,
\emph{A note on the singularity probability of random directed
$d$-regular graphs},
European J. Combin. \textbf{122} (2024), paper no.~104039.

\bibitem{RT19}
M.~Rudelson and K.~Tikhomirov, \emph{The sparse circular law under minimal assumptions}, Geom. Funct. Anal. \textbf{29} (2019), 561--637.

\bibitem{SahSahasrabudheSawhney}
A.~Sah, J.~Sahasrabudhe, and M.~Sawhney,
\emph{The limiting spectral law for sparse iid matrices},
Forum Math. Pi, to appear, arXiv:2310.17635v2, 2025.

\bibitem{TaoVu}
T.~Tao and V.~Vu, with an appendix by M.~Krishnapur,
\emph{Random matrices: universality of ESDs and the circular law},
Ann. Probab. \textbf{38} (2010), no.~5, 2023--2065.


\bibitem{Wood}
P.~M.~Wood, \emph{Universality and the circular law for sparse random matrices}, Ann. Appl. Probab. \textbf{22} (2012),
1266--1300.

\end{thebibliography}
}
\end{document}